\newcommand{\agr}[2][{}]{{{#2}^{\mathsf g}_{#1}}}
\def\HS{\operatorname{HS}}
\def\im{\operatorname{im}}
\def\mult{\operatorname{mult}}
\def\m{\mathfrak m}
\def\n{\mathfrak n}
\def\ann{\operatorname{ann}}
\def\socle{\operatorname{socle}}
\def\Hom{\operatorname{Hom}}
\def\kk {\pmb k}
\def\Tor{\operatorname{Tor}}
\def\t{\otimes}
\def\HH{\operatorname{H}}
\def\pp{\mathfrak p}
\def\incl{\operatorname{incl}}
\newtheorem{theorem}{Theorem}[section]
\newtheorem{lemma}[theorem]{Lemma}
\newtheorem{corollary}[theorem]{Corollary}
\newtheorem{proposition}[theorem]{Proposition}
\newtheorem{proposition-no-advance}[equation]{Proposition}
\newtheorem{claim-no-advance}[equation]{Claim}
\newtheorem{observation}[theorem]{Observation}
\newtheorem{quick consequences}[theorem]{Quick Consequences}
\theoremstyle{definition}
\newtheorem{facts and definitions}[theorem]{Facts and Definitions}
\newtheorem{definition}[theorem]{Definition}
\newtheorem{remark}[theorem]{Remark}
\newtheorem{remark-no-advance}[equation]{Remark}
\newtheorem{data}[theorem]{Data}
\newtheorem{setup}[theorem]{Set up}
\newtheorem{careful calculation}[theorem]{Careful Calculation}
\newtheorem{present summary}[theorem]{Present Summary}
\newtheorem{further reductions}[theorem]{Further Reductions}
\newtheorem{chunk}[theorem]{}
\newtheorem{chunk-no-advance}[equation]{}
\newtheorem{marching orders}[theorem]{Marching Orders}
\newtheorem{circle the wagons}[theorem]{Circle the wagons}
\newtheorem*{Remark}{Remark}
\numberwithin{equation}{theorem}
\numberwithin{table}{theorem}
\begin{document}

\baselineskip=16pt

\title [Poincar\'e series of  compressed  local Artinian rings]{Poincar\'e series of compressed  local Artinian rings with odd top socle degree}

\date{\today}

\author[A.~R.~Kustin]{Andrew R.~Kustin}
\address{Andrew R.~Kustin\\ Department of Mathematics\\ University of South Carolina\\\newline 
Columbia\\ SC 29208\\ U.S.A.} \email{kustin@math.sc.edu}

\author[L.~M.~\c{S}ega]{Liana M.~\c{S}ega}
\address{Liana M.~\c{S}ega\\ Department of Mathematics and Statistics\\
   University of Missouri\\  Kansas City\\ MO 64110\\ U.S.A.}
     \email{segal@umkc.edu}

\author[A.~Vraciu]{Adela~Vraciu}

\address{Adela~Vraciu\\ Department of Mathematics\\ University of South Carolina\\
Columbia\\ SC 29208\\ U.S.A.} \email{vraciu@math.sc.edu}

\subjclass[2010]{13D02, 13D40, 13D07, 13E10, 13A02, 16E45}

\keywords{compressed ring, differential graded algebra, generic algebra, Golod homomorphism, Grassmannian, homology algebra of the Koszul complex, Koszul homology,   Poincar\'e series, socle, trivial Massey operation.}

\thanks{Research partly supported by  Simons Foundation collaboration grant 233597 (ARK), Simons Foundation collaboration grant 354594 (LM\c{S}), NSF grant  DMS-1200085 (AV)}

\thanks{Macaulay2 \cite{M2} was very valuable to us. We used it to test various conjectures.}

\begin{abstract}  We define a notion of compressed local Artinian ring that does not require the ring to contain a field. Let $(R,\m)$ be  a compressed  local Artinian ring
with odd top socle degree $s$, at least five, and $\socle(R)\cap \m^{s-1}=\m^s$.  We prove that 
the Poincar\'e series of all finitely generated modules over $R$ 
are 
rational, sharing a common denominator,
and that there is a Golod homomorphism from a complete intersection onto $R$. 
\end{abstract}

\maketitle

\section{Introduction.}

Let $(R,\mathfrak m,\kk)$ be a local ring and $M$ be a finitely generated $R$-module. The Poincar\'e series of $M$, $$P^R_M(z)=\sum\limits_{i=0}^\infty b_i(M) z^i,$$  is the generating function for the sequence of Betti numbers of $M$:
\begin{align*}b_i(M)&{}=\text{ the minimal number of generators of the $i^{\text{th}}$ syzygy of $M$}\\&{}=\dim_{\kk}\Tor_i^R(M,\kk).\end{align*} In the 1950's Kaplansky and Serre \cite[pg.~118]{Se65}  asked if the Poincar\'e series of a local ring is always a rational function. Considerable study was devoted to this question, 
(see, for example, the survey articles \cite{Ro,Ba}), before Anick \cite{An2} showed that the answer is no.   

Consideration of rational and transcendental Poincar\'e series has only intensified since the appearance of Anick's example. The example has been simplified, reworked, and reformulated in the language of Algebraic Topology; see the discussion following Problem 4.3.10 in \cite{A98} for more details, including references.  
Roos \cite{Ro05} calls a local ring $R$ {\it good} if the Poincar\'e series of all finitely generated modules over $R$ 
are 
rational, sharing a common denominator. 
A list of applications of the hypothesis that a local ring is good may be found in
\cite{Av94}. 
The recent papers \cite{HR,EV,CN,CENR}
all prove that a family of rings has rational 
Poincar\'e series. 

Nonetheless,  
at the Introductory
Workshop  for the special year in Commutative Algebra at the Mathematical Sciences Research Institute in 2012 
Irena Peeva  observed \cite{MP} that 
``We do not have a feel for which of the following cases holds.
\begin{enumerate}[\rm(a)]
\item Most Poincar\'e series are rational, and irrational Poincar\'e series occur rarely
in specially crafted examples.
\item  Most Poincar\'e series are irrational, and there are some nice classes of rings
(for example, Golod rings, complete intersections) where we have rationality.
\item  Both rational and irrational Poincar\'e series occur widely.
\end{enumerate}
One would like to have results showing whether the Poincar\'e series are rational
generically, or are irrational generically.''

 A first answer to Peeva's problem is made in the paper by Rossi and \c Sega, \cite{RS}, where it is shown that if $R$ is a compressed Artinian Gorenstein local ring with top socle degree not equal to three, then 
the Poincar\'e series of all finitely generated modules over $R$ 
are 
rational, sharing a common denominator. (In particular, these rings are good, in the sense of Roos.)  
The Rossi-\c{S}ega theorem is a complete answer to the Peeva problem for generic Artinian Gorenstein rings because generic Artinian Gorenstein rings are automatically compressed.  
Furthermore, it is necessary to avoid top socle degree three because B{\o}gvad \cite{B83} has given  examples of compressed Artinian Gorenstein rings with top socle degree three which have transcendental Poincar\'e series. 

In the present paper we carry the Rossi-\c{S}ega program further.
As in the Gorenstein case, once the relevant parameters are fixed, the set of
 Artinian standard-graded $\kk$-algebras is parameterized (now by a non-empty open subset in a chain of relative Grassmannians) and (when $\kk$ is infinite) the points on a non-empty open subset of  this parameter space 
 correspond to  compressed algebras. As in \cite{RS}, we ignore the parameter space 
 and the cardinality of $\kk$; instead, we prove 
that 
compressed  
local Artinian rings, 
with odd top socle degree $s$  are also good in the sense of Roos,
provided $5\le s$ and $\socle(R)\cap \m^{s-1}=\m^s$. Our result then applies in the ``generic case'' whenever the ``generic case'' makes sense.
B{\o}gvad's examples also apply in our situation, so we also are forced to exclude top socle degree equal to three.

Our argument is inspired by  the proof in \cite{RS}. The key ingredient from local algebra  
in our proof is Lemma~\ref{FirstStep} which can be interpreted as a statement about the  structure of the  Koszul homology algebra
$$\HH_\bullet(R\t_Q K)=\Tor_\bullet^Q(R,\kk),$$
where $Q\to R$ is a    surjection of local rings 
of the same embedding dimension $e$, $(Q,\n,\kk)$ is a regular local ring, $(R,\m,\kk)$ is
a compressed 
 local Artinian ring,
and $K$ is the Koszul complex which is a minimal  
resolution of $\kk$ by free $Q$-modules. When the hypotheses of Lemma~\ref{FirstStep} are in effect, then the conclusion may be interpreted to say that there is an element $\bar g$ in $\Tor_1^Q(R,\kk)$ with
$$\bar g\cdot \Tor_{e-1}^Q(R,\kk)=\Tor_{e}^Q(R,\kk).$$We use this conclusion to create a Golod homomorphism from a hypersurface ring onto $R$.

Our reliance on Lemma~\ref{FirstStep} explains the hypotheses in the main theorem
about  the shape of the socle of $R$. In particular, 
when the top socle degree of $R$ is even, it is possible for a non-Golod compressed   Artinian standard-graded $\kk$-algebra $R$ to have $\Tor_1\cdot \Tor_{e-1}=0$. (We are writing $\Tor_i$ in place of $\Tor_i^Q(R,\kk)$.) For example, if $e=4$ and  $\socle(R)$ is  isomorphic to $\kk(-4)^2$, then according to \cite[Conj.~3.13 and 4.1.2]{B99}, the Betti table 
for the minimal homogeneous resolution of $R$ by free $Q$-modules is 
$$\begin{matrix}            &0&  1 & 2&  3& 4\\
\text{total}: &1 &12 &19 &10& 2\\
         0: &1 & . & . & .& .\\
         1: &. & . & . & .& .\\
         2: &. &12 &15 & .& .\\
         3: &. & . & 4 &10& .\\
         4:& . & .&  . & .& 2\end{matrix} \quad \text{or}\quad\left(\begin{matrix} 12&15&0\\0&4&10\end{matrix}\right),$$ in the language of Macaulay2 \cite{M2} or Boij \cite[Notation~3.4]{B99},  respectively.
The numerology alone shows that $\Tor_1\cdot \Tor_3=0$, but the numerology permits $\Tor_2\cdot \Tor_2$ to be non-zero, and this is precisely what happens. In a similar manner, 
if the top socle degree of $R$ is  three or if  $(\socle(R)\cap \m^{s-1})/\m^s\neq 0$, then the Betti tables (in the homogeneous case) 
 permit too many non-zero products in $\Tor$. Consequently, if $R$ is compressed and  the top socle degree of $R$ is $3$, or  the top socle degree of $R$ is even, or  $(\socle(R)\cap \m^{s-1})/\m^s$ is non-zero, then our techniques are not able to determine  if the Poincar\'e series of $R$ is rational. In these cases, the question of Peeva remains wide open. 

We prove that the Poincar\'e series of $R$ is rational by exhibiting a Golod surjection from a complete intersection onto $R$. It is worth observing that the existence of such a map is an important conclusion in its own right. For example, 
this hypothesis is used in \cite{L16} in the study of the rigidity of the two-step Tate complex, in \cite{AINS} in study of the non-vanishing of $\Tor^R_i(M,N)$  for infinitely many $i$, and in \cite{AISS} in the study of the structure of the set of semi-dualizing modules of a ring $R$.

Theorem~\ref{5.1} is the main result of the paper. To prove this theorem  we
apply Lemma~\ref{RS1.2}, which is established in \cite{RS}. Lemma~\ref{RS1.2} is a 
down-to-earth
criteria for proving that a given surjection of local rings is a Golod homomorphism.  Massey operations are replaced with calculations involving $\Tor^Q_{\bullet}(-,\kk)$, where $Q$ is a regular local ring.

A compressed 
 local Artinian ring $R$ exhibits extremal behavior. Such a ring  has maximal length among all 
 local Artinian rings with the same embedding dimension and  socle polynomial. 
Extremal objects  
 exhibit special properties and deserve extra study. Indeed,
there are many applications of compressed rings and these rings have received much  study;  see, for example, \cite{S80,I84, FL84, F87, B92, GP98, B99, B00, MM03, Z03, Z04, MMN05, K07, C10, D14, RS, ER15, HS17}.
However, for  thirty years, 1984\,--\,2014,  the notion of ``compressed'' ring 
 was only defined for rings containing a field. 
Finally, in 2014, Rossi and \c Sega \cite{RS} proved that  
the notion of 
``compressed  local Artinian \underline{Gorenstein}  ring'' is meaningful,  interesting, and works just as well in the non-equicharacteristic case. 
Furthermore, their theorem about rational Poincar\'e series is valid in the non-equicharacteristic case.
 
In sections
\ref{mult} and \ref{comp} we embrace the philosophy of \cite{RS} and prove that the phrase ``compressed  local Artinian   ring'' is meaningful whether or not the ring contains a field and whether or not the ring is Gorenstein. Furthermore our main theorem, Theorem~\ref{5.1}, is valid in the context of this enlarged notion of ``compressed ring''. It is worth noting that although we adopt the philosophy of \cite{RS}, the techniques about Gorenstein compressed rings in \cite{RS} are not relevant in our situation.     Our technique is introduced in Section~\ref{mult} and the proof that the phrase 
``compressed  local Artinian   ring'' is meaningful is carried out in Section~\ref{comp}. This study of compressed local Artinian rings is an important feature of the present paper.

Section~\ref{Prelims} consists of preliminary matters. 
In Section~\ref{mult} we introduce our duality technique for studying  local Artinian   rings. In Section~\ref{comp} we prove that the notion of compressed ring is meaningful in the non-equicharacteristic case. Section~\ref{GH} is concerned with Golod homomorphisms and the homological algebra that can be used to prove that a homomorphism is Golod. In Section~\ref{AIL} we explore the consequences of the hypothesis ``compressed'' on the homological algebra of Section~\ref{GH}. The proof of the main theorem is given in Section~\ref{mainResult}. In Section~\ref{consolation} it is shown that in the situation of the main theorem, $R/\m^s$ is a Golod ring and the natural map $R\to R/\m^s$ is a Golod homomorphism; furthermore, the final statement continues to hold even if $(\socle(R)\cap \m^{s-1})/\m^s$ is not zero.

\section{Notation, conventions, and preliminary results.}\label{Prelims}

In this paper ${\kk} $ is always a field. 
\begin{chunk}Let $I$ be an ideal in a ring $A$, $N$ be an $A$-module, and  $L$ and $M$ be submodules of $N$. Then 
$$L:_IM=\{x\in I\mid xM\subseteq L\}\quad\text{and}\quad L:_MI=\{m\in M\mid Im\subseteq L\}.$$If $L$ is the zero module, then we also use ``annihilator notation'' to describe these ``colon modules''; that is,
$$\ann_AM=0:_AM\quad\text{and}\quad \ann_NI=0:_NI.$$Any undecorated ``$:$'' or ``$\ann$'' means $:_A$ or $\ann_A$, respectively, where $A$ is the ambient ring.  \end{chunk}

\begin{chunk}\label{N27.1} If $I$ is an ideal in a  ring $A$, $N$ is an $A$-module, and $L$ and $M$ are submodules of $N$ with $IL\subseteq M$, then  let 
$$\mult:I\to \Hom_A(L,M)$$ denote the homomorphism which sends the element $\theta$ of $I$ to the homomorphism $\mult_{\theta}$ of $\Hom_A(L,M)$, where $\mult_{\theta}(\ell)=\theta \ell$ for all $\ell$ in $L$. \end{chunk}

\begin{chunk}\label{2.2} 
``Let $(R,\mathfrak m,\kk)$ be a local ring'' identifies $\mathfrak m$ as the unique maximal ideal of the commutative Noetherian local ring $R$ and $\kk$ as the residue class field $\kk=R/\mathfrak m$. \begin{enumerate}[\rm(a)] 
\item The {\it embedding dimension} of $R$ is $e=\dim_{\kk} (\m/\m^{2})$. 
\item The {\it Hilbert function} of $R$ is the function
$$i \mapsto h_R(i)=\dim_{\kk} (\m^i/\m^{i+1}).$$
\item If $M$ is an $R$-module, then the {\it socle} of $M$ is the vector space $\socle(M)=0:_M\mathfrak m$.
\item\label{2.2.d} If $(R,\mathfrak m,\kk)$ is  a local Artinian ring, then the {\it top socle degree} of $R$ is the maximum integer $s$ with $\m^s\neq 0$ and the {\it socle polynomial} of $R$ is the formal polynomial $\sum_{i=0}^sc_i z^i$, where
$$c_i=\dim_{\kk}\frac{\socle(R)\cap \m^i}{\socle(R)\cap \m^{i+1}}.$$
Further comments about the phrase ``top socle degree'' may be found in Remark~\ref{2.10}.
\item If $M$ is a finitely generated $R$-module, then $\mu(M)$ denotes the minimal number of generators of $M$.
\item\label{v(R)} The parameter ${\tt v}(R)$ is defined by $$\textstyle {\tt v}(R)=\inf\left\{i\left| \dim_{\kk} (\m^i/\m^{i+1}) < \binom{(e-1)+i}{i}\right.\right\},$$where $e$ is the embedding dimension of $R$. (This notation  is introduced in \cite[(4.1.1)]{RS}.)
\end{enumerate}
\end{chunk}
\begin{Remark} Let $(R,\m,\kk)$ be a local Artinian ring with top socle degree $s$ and socle polynomial equal to $\sum_{i=0}^sc_iz^i$. Part of the hypothesis of Theorem~\ref{5.1} is that $\socle(R)\cap \m^{s-1}=\m^s$. This condition is equivalent to $c_{s-1}=0$. \end{Remark}

\bigskip
Observation~\ref{obs} follows quickly from the definition of ${\tt v}(R)$, gives an idea of the significance of this invariant, and is used in the proof of Lemma~\ref{large powers}.
\begin{observation}\label{obs}If $(R,\m,\kk)$ is a local 
 Artinian  ring, $x_1$ is a minimal generator of $\m$,  and $i$ is an integer with $0\le i\le {\tt v}(R)-2$, 
then the linear transformation 
 \begin{equation}\label{name-me}x_1:\m^i/\m^{i+1}\to \m^{i+1}/\m^{i+2},\end{equation}
which is given by multiplication by $x_1$, is an injection.  In particular, 
$$(\m^j:\m)=\m^{j-1},\quad \text{for $1\le j\le {\tt v}(R)$,}$$and $\socle(R)\subseteq \m^{{\tt v}(R)-1}$.
\end{observation}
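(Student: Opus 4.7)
The plan is to compare $R$ with the polynomial ring $A=\kk[X_1,\dots,X_e]$ by passing through the associated graded ring $G=\operatorname{gr}_{\m}(R)$.  Fix minimal generators $x_1,\dots,x_e$ of $\m$ starting with the given $x_1$, and let $\pi\colon A\twoheadrightarrow G$ be the graded surjection that sends $X_j$ to the class of $x_j$ in $\m/\m^2$.  The crux is the observation that the definition of ${\tt v}(R)$ is tailor-made for this situation: whenever $0\le i\le {\tt v}(R)-1$ one has $\dim_\kk G_i=\binom{e-1+i}{i}=\dim_\kk A_i$, so the surjection $\pi_i\colon A_i\to G_i$ is actually an isomorphism.

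Granted this, for every $i$ with $0\le i\le {\tt v}(R)-2$ both $\pi_i$ and $\pi_{i+1}$ are isomorphisms, and under these identifications the multiplication map $x_1\colon \m^i/\m^{i+1}\to \m^{i+1}/\m^{i+2}$ is transported to multiplication by $X_1\colon A_i\to A_{i+1}$.  The latter is injective since $A$ is a domain, so the former is injective as well, which is precisely the first assertion.

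For the consequences, the inclusion $\m^{j-1}\subseteq (\m^j:\m)$ is automatic.  For the reverse I would take $z\in (\m^j:\m)$, let $i$ be the largest integer with $z\in \m^i$, and argue by contradiction: if $z\notin \m^{j-1}$ then $i\le j-2\le {\tt v}(R)-2$, so $\bar z$ is nonzero in $\m^i/\m^{i+1}$, yet $x_1 z\in \m z\subseteq \m^j\subseteq \m^{i+2}$ forces $x_1\bar z=0$, contradicting the injectivity just established.  The socle statement follows at once, since $\socle(R)=0:_R\m\subseteq \m^{{\tt v}(R)}:_R\m=\m^{{\tt v}(R)-1}$.

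There is no deep obstacle here; the point that requires care is that the first half of the argument genuinely needs both $\pi_i$ and $\pi_{i+1}$ to be isomorphisms, which is exactly why the hypothesis reads $i\le {\tt v}(R)-2$ rather than $i\le {\tt v}(R)-1$.  The argument would break at $i={\tt v}(R)-1$ because the target $\m^{{\tt v}(R)}/\m^{{\tt v}(R)+1}$ may be a proper quotient of $A_{{\tt v}(R)}$, and cancellation in $A$ would no longer translate to cancellation in $G$.
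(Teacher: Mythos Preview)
Your proof is correct and is essentially the paper's argument in different clothing: the paper extends $x_1$ to a minimal generating set, observes that the degree-$d$ monomials generate $\m^d/\m^{d+1}$ and (by the definition of ${\tt v}(R)$) form a basis when $d<{\tt v}(R)$, and then notes that multiplication by $x_1$ carries a basis to part of a basis.  Your identification of $G_i$ with $A_i$ via $\pi_i$ is exactly the statement that these monomials form a basis, and ``$A$ is a domain'' is the abstract version of ``$x_1$ sends distinct monomials to distinct monomials.''  The paper leaves the ``In particular'' consequences to the reader; your contradiction argument for $(\m^j:\m)\subseteq\m^{j-1}$ is the natural way to fill that in.
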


\begin{proof}Extend the set $\{x_1\}$ to be a minimal generating set 
  $x_1,x_2,\dots,x_e$ 
for $\m$. If $d$ is an arbitrary non-negative integer, then the  set of monomials in $x_1,\dots,x_e$ of degree $d$ represents a generating set for $\m^d/\m^{d+1}$.  If $d<{\tt v}(R)$, then the number of monomials in this set is equal to the dimension of the vector space $\m^{d}/\m^{d+1}$ and hence this set of monomials is a basis for the vector space.  The index $i$  satisfies $i+1<{\tt v}(R)$; consequently, the linear transformation (\ref{name-me}) carries a basis of $\m^i/\m^{i+1}$ to  part of a basis of  $\m^{i+1}/\m^{i+2}$; and  therefore, this linear transformation 
is an injection. \end{proof}

\begin{definition}\label{cmped-new*} Let $(R,\m,\kk)$ be a local Artinian ring of 
embedding dimension $e$, top socle degree $s$, and socle polynomial $\sum_{i=0}^s c_iz^i$. 
If   
 the   Hilbert function of $R$ is given by
$$\textstyle 
\dim_{\kk} (\m^i/\m^{i+1}) = \min \Big\{ \binom{(e-1)+i}{i}, \ 
\sum\limits_{\ell=i}^s c_\ell\binom{(e-1)+(\ell-i)}{\ell-i}
\Big\},\quad \text{for $0\le i\le s$,}$$ then $R$ is called a 
{\it compressed 
 local Artinian ring}.
\end{definition}
Alternate definitions of ``compressed  local Artinian ring'' are given in Theorem~\ref{comp-ring} and Remark~\ref{4.4.2}.

\begin{chunk}If $S$ is a ring and $M$ is an $S$-module, then let $\lambda_S(M)$ denote the length of $M$ as an $S$-module.\end{chunk}

\begin{chunk}
Let $\kk$ be a field. A graded ring $R=\bigoplus_{0\le i}R_i$ is called a {\it standard-graded  ${\kk} $-algebra}, if $R_0={\kk} $, $R$ is generated as an $R_0$-algebra by $R_1$, and $R_1$ is finitely generated as an $R_0$-module.  \end{chunk}

\begin{chunk}\label{agr} If $M$ is a module over the local ring $(R,\m,\kk)$, then 
$$\agr{M}=\bigoplus_{i=0}^\infty \m^iM/\m^{i+1}M\quad \text{and}\quad \agr{R}=\bigoplus_{i=0}^{\infty} \m^i/\m^{i+1}$$ are the associated graded objects with respect the the maximal ideal: $\agr{R}$ is a standard graded $\kk$-algebra and $\agr{M}$ is a graded $R$-module. 
\end{chunk}

\begin{chunk} If $V$ is a graded vector space over the field $\kk$ with $V_i$ finite dimensional for all $i$ and $V_i=0$ all sufficiently small $i$, then the formal Laurent series $$\HS_V(z)=\sum_{i} \dim_{\kk}(V_i)\, z^i$$
 is called the {\em Hilbert series} of $V$.\end{chunk}

\begin{remark}\label{2.10} Let $(R,\mathfrak m,\kk)$ be  a local Artinian ring. There are various reasons that the expression ``top socle degree'', which is introduced in \ref{2.2}.(\ref{2.2.d}), is appropriate.
\begin{enumerate}[\rm(a)]
\item This is the expression that Iarrobino used when he defined the notion of  compressed algebra in \cite[Def.~2.2]{I84}.
\item The top socle degree of $R$ 
 is the {\bf degree} of the {\bf socle} polynomial of $R$.
\item The Hilbert series of the associated graded ring $\agr{R}$ of $R$ is a polynomial of {\bf degree} equal to the top socle degree of $R$.
\item The top socle degree of $R$  
is the {\bf top degree} of the associated graded ring $\agr{R}$.
\end{enumerate}\end{remark}

\bigskip The following calculation  is used in the proof of Lemma~\ref{4.2.c}.
\begin{remark}\label{Rmk11} If $\kk$ is an infinite field, $(Q,\n,\kk)$ is a regular 
local ring 
of embedding dimension $e$, $t$ is an integer, 
 and $h_0$ is an element of $\n^t\setminus \n^{t+1}$, then there exists 
a minimal generating set  $X_1,\dots,X_e$ for $\n$ such that $h_0-uX_1^t$  is in the ideal $(X_2,\dots,X_e)\mathfrak n^{t-1}$, for some unit $u$ in $Q$. In particular, there is a generator $h$ for the ideal $(h_0)$ of $Q$ such that $h-X_1^t\in (X_2,\dots,X_e)\mathfrak n^{t-1}$.
\end{remark}

\medskip\noindent{\it Outline of proof.}
Pass to the associated graded ring $\agr{Q}$.
If $h_0$ is a non-zero homogeneous form of degree $t$ in $\kk[X_1,\dots,X_e]$, where $\kk$ is an infinite field, then, there exists  a homogeneous change of variables $$X_1\mapsto x_1,\quad \text{and}\quad  X_i\mapsto a_ix_1+x_i,\text{ for $2\le i\le e$},$$ such that, in the new variables, $h=x_1^t+g$, where $g$ is a homogeneous form of degree $t$ in the ideal $(x_2,\dots,x_e)$.
The proof is clear.  Start with $$h_0=\sum \limits_{j_1+\dots+j_{e}=t}A_{j_1,\dots,j_e} X_1^{j_1}\cdots X_e^{j_e}.$$ After the change of variables $h_0=h_0(1,a_2,\dots,a_e)x_1^t+g$, where $g$ is a homogeneous form of degree $t$ in the ideal $(x_2,\dots, x_e)$. 
 The field $\kk$ is infinite; so, there exists a point $(a_2,\dots,a_e)$ in affine $e-1$ space with $h_0(1,a_2,\dots,a_e)\neq 0$.  \hfil \qed

\begin{chunk}\label{bi-g} If $ P=\bigoplus_i  P_i$ is a graded ring,   and $A=\bigoplus_i A_i$ and $B=\bigoplus_i B_i$ are graded $ P$-modules, then the module $\operatorname{Tor}^{ P}_{\bullet}(A,B)$ is a bi-graded $ P$-module. Indeed, if $${\mathbb Y}: \dots \to Y_1 \to Y_0\to A$$ is a  resolution of $A$ by free $P$-modules,  homogeneous of degree zero, then 
$$\operatorname{Tor}_{p,q}^ P(A,B)=\frac{\ker [(Y_p\otimes B)_q\to (Y_{p-1}\otimes B)_q]}{\operatorname{im} [(Y_{p+1}\otimes B)_q\to (Y_{p}\otimes B)_q]}.$$\end{chunk}

\begin{chunk} If $\mathbb Y$ is a complex, then we use $Z_i(\mathbb Y)$,  $B_i(\mathbb Y)$, and $\HH_i(\mathbb Y)$ to represent the modules of $i$-cycles, $i$-boundaries, and $i^{\text{th}}$-homology  of $\mathbb Y$, respectively. So, in particular, $\HH_i(\mathbb Y)=Z_i(\mathbb Y)/B_i(\mathbb Y)$.
\end{chunk}

\begin{chunk}\label{2.14} Let $(R,\m,\kk)$ be a local ring of embedding dimension $e$. The ring $R$ is {\em Golod} if 
$$P_{\kk}^R(z)= \frac{(1+z)^e}{1-\sum_{j=1}^e\dim_{\kk}\HH_j(K^R) z^{j+1}},$$ where $K^R$ is the Koszul complex on a minimal set of generators of $\m$. 
\end{chunk}

\section{Homomorphisms from a power of the maximal ideal to the socle.}\label{mult}

In order to study compressed rings, one must have an appropriate duality theory.  Partial derivatives provide the duality for Iarrobino \cite{I84}.  Fr\"oberg and Laksov \cite{FL84} and Boij and Laksov \cite{BL} pick a vector space $V$ in the polynomial ring $\kk[x_1,\dots,x_e]$ and use colon ideals to define an ideal  $I$ in the polynomial ring with the property that the corresponding quotient ring has socle $V$. The colon ideals provide the duality in these cases.  Rossi and \c Sega \cite{RS} work in a Gorenstein ring and use Gorenstein duality directly. Duality for us is supplied by homomorphisms from a power of the maximum ideal to the socle.
  
Let $(R,\m,\kk)$ be a  local Artinian ring with top socle degree $s$. If $j$ and $k$ are integers with $0\le j$, $1\le k$, and $j+k\le s+1$, then the $R$-module homomorphism
$$\mult: \m^j\cap (0:\m^k)\to \Hom_R\left(\m^{k-1},\socle(R)\cap \m^{j+k-1}\right)$$
of \ref{N27.1} induces an injective $R$-module homomorphism
\begin{equation}\label{M24.a}\frac {\m^j\cap (0:\m^k)}{\m^j\cap (0:\m^{k-1})} \to \Hom_R\left(\frac{\m^{k-1}}{\m^k},\socle(R)\cap \m^{j+k-1}\right),\end{equation}
which we also call $\mult$. The injections of (\ref{M24.a}) are our main tool for studying compressed rings.
 The remarkable feature of these injections  is that if one of them is a surjection, and all other conditions are favorable, then a whole family  of these injections are surjections. Recall the invariant ${\tt v}(R)$ from \ref{2.2}.(\ref{v(R)}).

\begin{lemma}\label{27.4}  Let $(R,\m,\kk)$ be a  local Artinian ring with embedding dimension $e$ and top socle degree $s$, and let
$A$ and $B$ be non-negative integers with $0\le A+B\le s$.
 Assume that \begin{enumerate} [\rm(a)]
\item\label{27.4.a} $B\le {\tt v}(R)-1$,  and  
\item\label{27.4.b} 
$\mult:\m^{A}\cap (0:\m^{B+1})\to \Hom_R(\m^{B},\socle(R)\cap\m^{A+B})$ is surjective.
\end{enumerate}
 Then \begin{equation}\label{goal26}\mult:\m^{A+\epsilon}\cap (0:\m^{B+1-\epsilon})\to \Hom_R(\m^{B-\epsilon},\socle(R)\cap\m^{A+B})\end{equation} is surjective for all integers $\epsilon$ with $0\le \epsilon\le \min\{B,s-A\}$.
\end{lemma}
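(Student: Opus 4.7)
The plan is to induct on $\epsilon$, with the base case $\epsilon=0$ being precisely hypothesis (\ref{27.4.b}). For the inductive step, writing $A'=A+\epsilon$ and $B'=B-\epsilon$, so that $A'+B'=A+B$, I assume surjectivity of $\mult:\m^{A'}\cap(0:\m^{B'+1})\to\Hom_R(\m^{B'},\socle(R)\cap \m^{A'+B'})$ and aim to prove surjectivity of $\mult:\m^{A'+1}\cap(0:\m^{B'})\to\Hom_R(\m^{B'-1},\socle(R)\cap \m^{A'+B'})$. The range constraints $\epsilon+1\le\min\{B,s-A\}$ ensure the step is legitimate: $A'+1\le s$, $B'-1\ge 0$, and $B'\le B\le {\tt v}(R)-1$, so hypothesis (\ref{27.4.a}) still applies at the shifted indices.

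The main idea is to realize the desired element as a product $x_1 y$, where $x_1$ is a chosen minimal generator of $\m$ and $y$ is supplied by the inductive hypothesis applied to a carefully chosen lift of the given homomorphism $\phi$. Since the target of $\phi\in \Hom_R(\m^{B'-1},\socle(R)\cap \m^{A'+B'})$ lies in the socle, $\phi$ factors through a $\kk$-linear map $\bar\phi:\m^{B'-1}/\m^{B'}\to \socle(R)\cap \m^{A'+B'}$. Observation~\ref{obs}, applied with $i=B'-1$, asserts that multiplication by $x_1$ induces an injection $\m^{B'-1}/\m^{B'}\hookrightarrow \m^{B'}/\m^{B'+1}$, so I may define a $\kk$-linear map on the image by $\bar\psi(x_1\bar m):=\bar\phi(\bar m)$ and then extend $\bar\psi$ arbitrarily, via a vector-space splitting, to all of $\m^{B'}/\m^{B'+1}$. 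This produces $\psi\in \Hom_R(\m^{B'},\socle(R)\cap \m^{A'+B'})$ satisfying $\psi(x_1 m)=\phi(m)$ for every $m\in \m^{B'-1}$. The inductive hypothesis then yields $y\in \m^{A'}\cap (0:\m^{B'+1})$ with $\mult_y=\psi$, and setting $x:=x_1 y$ finishes the step: $x\in \m^{A'+1}$ is immediate; $x\cdot \m^{B'}\subseteq y\cdot \m^{B'+1}=0$ gives $x\in (0:\m^{B'})$; and for $m\in \m^{B'-1}$, $xm=y(x_1 m)=\psi(x_1 m)=\phi(m)$, so $\mult_x=\phi$.

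The main obstacle I anticipate is constructing the lift $\psi$ so that some $y$ realizing $\psi$ actually recovers $\phi$ after premultiplication by $x_1$. This is where hypothesis (\ref{27.4.a}) plays its essential role, through Observation~\ref{obs}: only the injectivity of multiplication by $x_1$ on $\m^{B'-1}/\m^{B'}$ makes the prescription $\bar\psi(x_1\bar m):=\bar\phi(\bar m)$ well-defined, and once $\psi$ exists, the rest of the verification is routine bookkeeping with the module operations and the membership conditions on $x=x_1y$.
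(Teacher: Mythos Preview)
Your proof is correct and follows essentially the same approach as the paper: both iterate on $\epsilon$, lift a given homomorphism on $\m^{B'-1}$ to one on $\m^{B'}$ via multiplication by a minimal generator $x_1$, apply the inductive hypothesis to obtain $y$, and check that $x_1 y$ realizes the original homomorphism. The only cosmetic difference is that the paper constructs the lift using an explicit monomial basis and Kronecker deltas, whereas you invoke Observation~\ref{obs} directly and extend over a vector-space complement; the underlying idea is identical.
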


\begin{proof} The proof may be iterated; consequently, it suffices to prove the result for $\epsilon$ equal to $1$. 
Let $x_1,\dots, x_e$ be a minimal generating set for $\m$. For each 
integer $i$, let $\binom{x_1,\dots, x_e}i$ be the set of monomials of degree $i$ in $x_1,\dots,x_e$. View $\binom{x_1,\dots, x_e}i$ as a subset of $R$. Hypothesis~(\ref{27.4.a}) guarantees that 
$\binom{x_1,\dots, x_e}i$ represents a basis for $\m^i/\m^{i+1}$
for all $i$ with $0\le i\le B$.  Let $c=\dim_{\kk} (\socle(R)\cap\m^{A+B})$ and $\sigma_1,\dots,\sigma_c$ be a basis for $\socle(R)\cap\m^{A+B}$. 
If $i\in\{B-1,B\}$, $m\in \binom{x_1,\dots, x_e}{B}$, and $\gamma$ is an integer with $1\le \gamma\le c$, then the $R$-module homomorphism $\phi_{m,\gamma}$, which is defined by 
$$\textstyle \phi_{m,\gamma}(m')=\delta_{m,m'}\cdot \sigma_{\gamma},\quad \quad \text{ for $m'\in \binom{x_1,\dots, x_e}{i}$},$$ is an element of $\Hom_R(\m^{i},\socle(R)\cap\m^{A+B})$;
furthermore,
$$\textstyle \{\phi_{m,\gamma}\mid m\in \binom{x_1,\dots, x_e}{i}\text{ and }1\le \gamma\le c\}$$ is a basis for the vector space $\Hom_R(
\m^{i},\socle(R)\cap\m^{A+B})$. In this discussion, ``$\delta$'' is  
the Kronecker delta; that is,
$$\delta_{m,m'}=\begin{cases} 1,&\text{if $m=m'$,}\\0,&\text{otherwise.}\end{cases}$$

Fix a monomial $m_0\in \binom{x_1,\dots, x_e}{B-1}$ and an index $\gamma$ with $1\le \gamma\le c$. We complete the proof by showing that  the basis element $\phi_{m_0,\gamma}$ of $\Hom_R(\m^{B-1},\socle(R)\cap\m^{A+B})$ is in the image of (\ref{goal26}) when $\epsilon=1$. Observe that $x_{1}m_0\in \binom{x_1,\dots,x_e}B$ and $\phi_{x_1m_0,\gamma}$ is in $\Hom_R(\m^{B},\socle(R)\cap\m^{A+B})$. The hypothesis guarantees that there is an element $\theta\in \m^A\cap (0:\m^{B+1})$, with $\mult_\theta= \phi_{x_1m_0,\gamma}$. Observe that 
$$x_1\theta\in \m^{A+1}\cap (0:\m^{B})\quad\text{and}\quad 
\mult_{x_1\theta}\in \Hom_R(\m^{B-1},\socle(R)\cap\m^{A+B})$$ with 
$\mult_{x_1\theta}=\phi_{m_0,\gamma}$. Indeed, 
$$\mult_{x_1\theta}(m)=mx_1\theta=\delta_{mx_1,m_0x_1}\cdot \sigma_\gamma=\delta_{m,m_0}\cdot \sigma_\gamma
=\phi_{m_0,\gamma}(m)$$ for all $m\in \binom{x_1,\dots,x_e}{B-1}$.
\end{proof}

\section{Compressed local Artinian rings.} \label{comp}

A compressed  local Artinian ring has maximal length among all  local Artinian rings with the same embedding dimension and socle polynomial. 
 Compressed algebras were introduced 
by   Iarrobino \cite{I84}. Fr\"oberg and   Laksov \cite{FL84} offer an alternate discussion, essentially from the dual point of view. 
Traditionally, the concept ``compressed'' was only defined for equicharacteristic rings. 
 However, the equicharacteristic hypothesis is irrelevant  and the proof of our main theorem (Theorem~\ref{5.1}) holds for arbitrary compressed  local Artinian rings. 

There are two themes in this section. In Theorem~\ref{BL-gts} and Remark~\ref{FL-gts} we explain the sense in which  generic standard-graded  Artinian algebras over a field are compressed. 
A short, self-contained, and  direct 
proof of Theorem~\ref{BL-gts} 
may be found in \cite
{BL}. 

In Theorem~\ref{comp-ring} and Corollary~\ref{large powers} we justify the first sentence of the present section  and we describe the annihilator of each large power of the maximal ideal of $R$ when $R$ is a compressed  local Artinian ring. 
This information is used heavily in the proofs of 
Corollary~\ref{I-cor} and  Lemma~\ref{FirstStep}. Lemma~\ref{FirstStep} is the key result from local algebra that is used in the second half of the paper about Poincar\'e series.

\begin{theorem}\cite[3.4]{BL}\label{BL-gts} Let $\kk$ be an infinite field, $(e,s,c)$ be integers with $2\le e$ and $$\textstyle 1\le c< \binom{e+s-1}s,$$  $Q$ be a standard-graded polynomial ring over $\kk$ of embedding dimension $e$, 
  $\mathcal G$ be the Grassmannian  of subspaces of $Q_s$ of codimension $c$,  and $\mathcal L$ be the set of homogeneous ideals $I$ of $Q$ 
such that $Q/I$ is a standard-graded  Artinian $\kk$-algebra with socle polynomial $cz^s$.
Then the following statements hold. \begin{enumerate}[\rm(a)]
\item The set  $\mathcal G$ parameterizes $\mathcal L$.
\item If $V$ is in $\mathcal G$, then the corresponding ideal $I$ in $\mathcal L$ is generated by 
$$
\sum_{i=1}^s(V:_{Q_i}Q_{s-i}).$$
\item If $I$ is in $\mathcal L$, then the corresponding element of $\mathcal G$ is $I_s$.
\item There is a non-empty open subset of $\mathcal G$ for which the corresponding quotient $Q/I$ is compressed.
\end{enumerate}
\end{theorem}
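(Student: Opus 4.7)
The plan is to establish (a), (b), (c) together by exhibiting mutually inverse maps $\mathcal{G} \leftrightarrow \mathcal{L}$, and then to prove (d) separately as a genericity statement about rank-maximality for a sequence of multiplication maps.

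Part (c) is straightforward: given $I \in \mathcal{L}$, the ring $Q/I$ is Artinian so it has a top nonzero degree $t$, and $(Q/I)_t$ is automatically contained in $\socle(Q/I)$; the socle polynomial $cz^s$ then forces $t=s$, whence $I_j = Q_j$ for $j > s$, and the degree-$s$ socle is exactly $Q_s/I_s$, of dimension $c$, placing $I_s$ in $\mathcal{G}$.

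For (b), I would define $I(V)$ degree by degree: set $I(V)_i = V :_{Q_i} Q_{s-i}$ for $1 \le i \le s$, and $I(V)_i = Q_i$ for $i > s$. The identity $Q_1 \cdot Q_{s-i-1} = Q_{s-i}$, valid in any polynomial ring, gives $Q_1 \cdot I(V)_i \subseteq I(V)_{i+1}$, so that $I(V)$ is a graded ideal; the same identity yields $I(V)_{i+1} :_{Q_i} Q_1 = I(V)_i$, whence the socle in each degree $i < s$ vanishes, while in degree $s$ the socle is $Q_s/V$, of dimension $c$. Hence $Q/I(V) \in \mathcal{L}$. The two constructions are mutually inverse: one direction is immediate from $I(V)_s = V :_{Q_s} Q_0 = V$; for the other direction, a descending induction on $i$ from $i = s$ (invoking at each step the vanishing of $\socle(Q/I)$ in degree $i-1$) shows that every $I \in \mathcal{L}$ satisfies $I_i = I_s :_{Q_i} Q_{s-i} = I(I_s)_i$. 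This proves (a), (b), (c).

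For (d), the key observation is that $I(V)_i$ is the kernel of the multiplication map
$$\mu_i(V)\colon Q_i \longrightarrow \Hom_{\kk}\bigl(Q_{s-i},\, Q_s/V\bigr), \qquad f \longmapsto \bigl(q \mapsto qf + V\bigr),$$
so that $\dim_{\kk}(Q/I(V))_i = \operatorname{rank}\mu_i(V)$. The target has dimension $c\cdot\dim Q_{s-i} = c\binom{e-1+s-i}{s-i}$, so compressedness at degree $i$ amounts exactly to $\mu_i(V)$ attaining its maximum possible rank $\min\{\dim Q_i,\, c\cdot\dim Q_{s-i}\}$. Maximum-rank loci are Zariski open, so intersecting over the finitely many indices $1 \le i \le s-1$ produces a Zariski-open subset of $\mathcal{G}$.

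The main obstacle is non-emptiness: exhibiting a single $V_0 \in \mathcal{G}$ at which every $\mu_i(V_0)$ simultaneously attains its maximum rank. The approach of \cite{BL} uses Macaulay's inverse system: identify the dual $Q_s^{\vee}$ with the degree-$s$ piece of a divided-power algebra on which $Q$ acts by contraction, and take $V$ to be the annihilator of a $c$-dimensional subspace $W \subseteq Q_s^{\vee}$ spanned by general elements (e.g.\ by $c$ general powers of linear forms). For such $W$, each required rank condition amounts to the non-vanishing of a polynomial in the coordinates of a basis of $W$; since $\kk$ is infinite, generic coordinates avoid the union of the finitely many vanishing loci arising from the conditions for all $i$. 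Producing one explicit $W$ that realizes all the rank maxima — equivalently, verifying that no conspiracy among the rank conditions forces a drop in any degree — is the technical heart of the proof in \cite{BL}.
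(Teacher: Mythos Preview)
The paper does not give its own proof of this theorem; it merely states the result and refers the reader to \cite{BL} for ``a short, self-contained, and direct proof.'' Your sketch is correct and is essentially the argument of Boij--Laksov: the bijection in (a)--(c) is set up exactly as you describe via the colon construction and the descending induction using the socle hypothesis, and (d) is reduced to showing that a single choice of $V$ (dually, of $c$ general inverse-system elements such as powers of general linear forms) makes all the multiplication maps $\mu_i$ simultaneously attain maximal rank, which is the substantive step carried out in \cite{BL} (and, in greater generality, in \cite{FL84}). So there is nothing to compare against here beyond the cited source, and your outline matches it.
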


\begin{remark}\label{FL-gts} Let $\kk$ be an infinite field. It is shown in Section 7 of \cite{FL84}, especially Theorem 14, that generic standard-graded Artinian $\kk$-algebras are compressed for all legal socle polynomials. (Theorem~\ref{BL-gts} deals only with socle polynomials of the form $cz^s$.) The exact details of the result in \cite{FL84} are similar to, but more complicated than, the details of Theorem~\ref{BL-gts}. There is no need to record the details of the statement of \cite{FL84} in the present paper. The extra complication arises  because\begin{enumerate}[\rm(a)] \item the set of legal socle polynomials is more complicated than $$\textstyle\{cz^s\mid 1\le c<\binom{e+s-1}s\},$$ and
\item the parameterization space for the set of all Artinian $\kk$-algebras with a given embedding dimension and socle polynomial is more complicated than the 
Grassmannian of all subspaces of codimension $c$ in a vector space of dimension $\binom{e+s-1}s$. \end{enumerate}\end{remark}

\bigskip We turn to the second theme in this section which is the notion of compressed local Artinian rings which are not necessarily equicharacteristic.
Proposition~\ref{M24} is a very important step in the proof of Theorem~\ref{comp-ring}. Ultimately, we use Proposition~\ref{M24} to connect the numerical information given in the definition of compressed rings to the homomorphisms ``$\mult$'' of Section~\ref{mult}.
\begin{proposition}\label{M24} Let $(R,\m,\kk)$ be an Artinian local ring with embedding dimension $e$, top socle degree $s$, and socle polynomial $\sum_{i=0}^sc_iz^i$. If     $j$ is an arbitrary integer, with
$0\le j\le s$, 
then  $$\lambda_R (\m^j)\le 
 \sum\limits_{\ell=j}^s c_{\ell} \binom{e+\ell-j}{\ell-j}.$$
\end{proposition}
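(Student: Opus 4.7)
My plan is to exploit the socle filtration $\{\mathfrak{m}^j \cap (0:\mathfrak{m}^k)\}_k$ of $\mathfrak{m}^j$, apply the injection (\ref{M24.a}) on each successive quotient, and then reassemble via the hockey stick identity.

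Since $R$ is Artinian with top socle degree $s$, we have $\mathfrak{m}^{s+1}=0$, so $\mathfrak{m}^j\cap(0:\mathfrak{m}^{s+1-j})=\mathfrak{m}^j$ and $\mathfrak{m}^j\cap(0:\mathfrak{m}^{0})=0$. Hence
$$\lambda_R(\mathfrak{m}^j)=\sum_{k=1}^{s+1-j}\dim_{\kk}\frac{\mathfrak{m}^j\cap(0:\mathfrak{m}^k)}{\mathfrak{m}^j\cap(0:\mathfrak{m}^{k-1})}.$$
By the injection (\ref{M24.a}), the $k$-th summand is bounded above by $\dim_{\kk}\Hom_R\bigl(\mathfrak{m}^{k-1}/\mathfrak{m}^k,\socle(R)\cap\mathfrak{m}^{j+k-1}\bigr)$. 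Both the source and the target of this Hom are annihilated by $\mathfrak{m}$, so they are $\kk$-vector spaces, and the Hom is simply $\kk$-linear with dimension equal to the product of the two dimensions.

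The first factor satisfies $\dim_{\kk}\mathfrak{m}^{k-1}/\mathfrak{m}^k\le\binom{e+k-2}{k-1}$, since $\mathfrak{m}^{k-1}/\mathfrak{m}^k$ is spanned by monomials of degree $k-1$ in $e$ generators of $\mathfrak{m}$. The second factor is a telescoping sum coming directly from the definition of the socle polynomial, namely
$$\dim_{\kk}\bigl(\socle(R)\cap\mathfrak{m}^{j+k-1}\bigr)=\sum_{\ell=j+k-1}^{s}c_\ell.$$
Putting these together,
$$\lambda_R(\mathfrak{m}^j)\le\sum_{k=1}^{s+1-j}\binom{e+k-2}{k-1}\sum_{\ell=j+k-1}^{s}c_\ell.$$

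The last step is bookkeeping: substitute $p=k-1$, swap the order of summation so that $\ell$ ranges from $j$ to $s$ and $p$ ranges from $0$ to $\ell-j$, and then apply the hockey stick identity
$$\sum_{p=0}^{\ell-j}\binom{e+p-1}{p}=\sum_{p=0}^{\ell-j}\binom{e+p-1}{e-1}=\binom{e+\ell-j}{e}=\binom{e+\ell-j}{\ell-j}.$$
This yields precisely $\sum_{\ell=j}^{s}c_\ell\binom{e+\ell-j}{\ell-j}$, as desired. There is no real obstacle here; the content of the argument is entirely carried by the injectivity of (\ref{M24.a}), and the rest is combinatorics. The only thing to watch is correctly identifying the boundary indices of the socle filtration, which work out because $\mathfrak{m}^{s+1}=0$ forces $(0:\mathfrak{m}^{s+1-j})$ to contain $\mathfrak{m}^j$.
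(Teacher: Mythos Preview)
Your proof is correct and follows essentially the same approach as the paper's own proof: the same socle filtration of $\mathfrak{m}^j$, the same use of the injection (\ref{M24.a}) to bound each successive quotient, the same estimate $\dim_{\kk}\mathfrak{m}^{k-1}/\mathfrak{m}^k\le\binom{e+k-2}{k-1}$, and the same final reindexing and binomial summation (the paper phrases the hockey-stick step as counting monomials of bounded degree, but it is the identical identity).
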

\begin{proof} Observe that \begin{align}\label{filt1}0={}&\big(\m^j\cap(0:\m^0)\big) \subseteq \big(\m^j\cap(0:\m^1)\big)\subseteq \big(\m^j\cap(0:\m^2)\big)\subseteq \cdots \\ \notag\cdots \subseteq {}&\big(\m^j\cap(0:\m^{s-j-1})\big)\subseteq \big(\m^j\cap(0:\m^{s-j})\big)\\ \notag \subseteq {}&\big(\m^j\cap(0:\m^{s-j+1})\big)=\m^j\end{align}
is a filtration of $\m^j$. The proof is obtained by exhibiting an injection from each factor of  filtration (\ref{filt1}) into a vector space whose dimension is easy to approximate.

If  $k$ is an integer with  $1\le k\le s+1-j$, then the $R$-module injection $\mult$ of (\ref{M24.a}) yields 
$$\lambda_R\left(\frac {\m^j\cap (0:\m^k)}{\m^j\cap (0:\m^{k-1})}\right) \le 
\left(\dim_{\kk}\left(\frac{\m^{k-1}}{\m^k}\right)\right) \left(\dim_{\kk}(\socle(R)\cap 
\m^{j+k-1})\right).$$ Recall that  
\begin{equation}\label{Recall}\dim_{\kk}\left(\frac{\m^{k-1}}{\m^k}\right)\le \binom{(e-1)+(k-1)}{k-1},\end{equation} because $\m^{k-1}$ is  generated by the set of monomials of degree $k-1$ in any minimal generating set of $\m$, and  
$$\dim_{\kk}(\socle(R)\cap \m^{j+k-1})=\sum\limits_{\ell=j+k-1}^s c_{\ell},$$by the definition of socle polynomial.
Thus, 
\begin{align}\label{M24.b}\lambda_R\left(\frac {\m^j\cap (0:\m^k)}{\m^j\cap (0:\m^{k-1})}\right)
\le \sum_{\ell=j+k-1}^s c_{\ell} \binom{(e-1)+(k-1)}{k-1},
 \end{align}
for $0\le j\le s$ and $1\le k\le s-j+1$. Combine (\ref{filt1}) and (\ref{M24.b}) to obtain $$\lambda_R (\m^j)\le 
\sum\limits_{k=1}^{s-j+1} \sum\limits_{\ell=j+k-1}^s c_{\ell} \binom{(e-1)+(k-1)}{k-1}.$$ Let $K=k+j-1$; reverse the order of summation; let $\alpha=K-j$; and recall the relationship between the number of monomials of degree at most  $\ell-j$ in $e$ variables and the number of monomials of degree equal to $\ell-j$ in $e+1$ variables to conclude
\begin{align}\label{M24.c}\lambda_R (\m^j)&{}\le 
\sum\limits_{K=j}^{s} \sum\limits_{\ell=K}^s c_{\ell} \binom{(e-1)+(K-j)}{K-j}\\\notag&{}
=\sum\limits_{\ell=j}^sc_{\ell}\sum\limits_{K=j}^{\ell}   \binom{(e-1)+(K-j)}{K-j}
\\\notag&{}=\sum\limits_{\ell=j}^sc_{\ell}\sum\limits_{\alpha=0}^{\ell-j}   \binom{(e-1)+\alpha}{\alpha}=\sum\limits_{\ell=j}^sc_{\ell} \binom{e+\ell-j}{\ell-j} .\end{align}
\vskip-18pt\end{proof}

In Theorem~\ref{comp-ring}  we justify the claim that
a compressed  local Artinian ring has maximal length among all  local Artinian rings with the same embedding dimension and socle polynomial. 
The proof of Theorem~\ref{comp-ring} contains a wealth of information. We mine this information throughout the rest of the section.

\begin{theorem}\label{comp-ring}
Let  
$(R,\m,\kk)$ be a local Artinian ring with embedding dimension $e$, top socle degree $s$, and socle polynomial  $\sum_{i=0}^sc_iz^i$. 
Then the following statements hold.
\begin{enumerate}[\rm(a)]
\item\label{3.5.a} The length of $R$ satisfies
\begin{equation}\label{,max-len}\textstyle
\lambda_R(R)\le \sum\limits_{i=0}^s \min \Big\{ \binom{(e-1)+i}{i}, \ 
\sum\limits_{\ell=i}^s c_\ell\binom{(e-1)+(\ell-i)}{\ell-i}
\Big\}.\end{equation}

\item\label{3.5.b}  Equality holds  in {\rm(\ref{,max-len})} if and only if  $R$ is a compressed  local Artinian ring in the sense of Definition~{\rm\ref{cmped-new*}}.
\item \label{xxx.a} 
If $R$ is a compressed  local Artinian ring, then
 the parameter ${\tt v}(R)$ from {\rm\ref{2.2}.(\ref{v(R)})} satisfies
$s\le 2{\tt v}(R)-1$. 
\end{enumerate}
\end{theorem}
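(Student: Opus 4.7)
The plan is to argue by contradiction at the single index $i={\tt v}(R)$. Abbreviate $v={\tt v}(R)$, $f(i)=\binom{(e-1)+i}{i}$, and $g(i)=\sum_{\ell=i}^s c_\ell\binom{(e-1)+(\ell-i)}{\ell-i}$, so that Definition~\ref{cmped-new*} asserts $h_R(i)=\min\{f(i),g(i)\}$ for every $i$. Suppose, toward contradiction, that $s\ge 2v$, equivalently $s-v\ge v$.

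The first step extracts the strict inequality $g(v)<f(v)$. By \ref{2.2}.(\ref{v(R)}) we have $h_R(v)<f(v)$, and since $R$ is compressed, the minimum in the displayed Hilbert function formula must therefore be attained by $g$; that is, $g(v)=h_R(v)<\binom{(e-1)+v}{v}$.

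The second step produces a matching lower bound on $g(v)$ from just the $\ell=s$ summand. Since $s$ is the top socle degree, $\m^s\subseteq \socle(R)$ and $\m^s\neq 0$, so $c_s\ge 1$. Combined with the obvious monotonicity $\binom{(e-1)+k}{k}\le\binom{(e-1)+(k+1)}{k+1}$ and the supposition $s-v\ge v$, this gives
\[ g(v)\;\ge\;c_s\,\textstyle\binom{(e-1)+(s-v)}{s-v}\;\ge\;\binom{(e-1)+(s-v)}{s-v}\;\ge\;\binom{(e-1)+v}{v}\;=\;f(v), \]
contradicting the first step. Hence $s\le 2v-1$.

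I do not anticipate any serious obstacle here: the argument is a one-line comparison of two explicit binomial expressions at a single index, made possible by the fact that, for a compressed ring, the definition of $v$ pins down which of the two candidates in Definition~\ref{cmped-new*} controls $h_R(v)$. The only facts requiring even minor verification are $c_s\ge 1$ (immediate from the definition of top socle degree) and the monotonicity of $\binom{(e-1)+k}{k}$ in $k$ (immediate).
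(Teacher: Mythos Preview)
Your proposal addresses only assertion (\ref{xxx.a}); parts (\ref{3.5.a}) and (\ref{3.5.b}) of the theorem are not touched and require separate arguments (in the paper, (\ref{3.5.a}) comes from Proposition~\ref{M24} together with the trivial bound $h_R(i)\le\binom{(e-1)+i}{i}$, and (\ref{3.5.b}) is the substantial part, using Lemma~\ref{27.4} to propagate the surjectivity of the maps $\mult$).

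For (\ref{xxx.a}) itself, your argument is correct and is in substance the paper's own. The paper first introduces the auxiliary index $t=\min\{i : g(i)<f(i)\}$, proves $s\le 2t-1$ by exactly your comparison---drop all but the $\ell=s$ summand of $g(t)$, use $c_s\ge 1$, and invoke monotonicity of $k\mapsto\binom{(e-1)+k}{k}$---and then, as a byproduct of the proof of (\ref{3.5.b}), identifies $t={\tt v}(R)$. You bypass the auxiliary $t$ by reading off $g(v)=h_R(v)<f(v)$ directly from the compressed hypothesis at $i=v$ (which is legitimate since the contradiction hypothesis $s\ge 2v$ forces $v\le s$, so Definition~\ref{cmped-new*} applies at $i=v$). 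This is a mild streamlining, not a genuinely different idea.
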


\begin{remark-no-advance}\label{4.4.2}Equation~(\ref{,light}),  the proof of assertion (\ref{xxx.a}), and the identity 
$$\textstyle\sum\limits_{i=0}^{{\tt v}(R)-1}\binom{(e-1)+i}{i}= \binom{e+{\tt v}(R)-1}{{\tt v}(R)-1}$$ show that an alternate version of (\ref{,max-len}) is given by
\begin{equation}\label{notag}\lambda_R(R)\le \binom{e+{\tt v}(R)-1}{{\tt v}(R)-1}+ \sum\limits_{\ell={\tt v}(R)}^sc_\ell \binom{e+\ell-{\tt v}(R)}{\ell-{\tt v}(R)}. 
\end{equation}Once the proof of Theorem~\ref{comp-ring} is complete, then we know that a local Artinian ring $R$ is compressed if and only if equality holds in (\ref{notag}). This observation  provides an effective method for testing if a ring is compressed.
\end{remark-no-advance}

\begin{proof} 
Define  $t$ to be  the integer  
\begin{equation}\label{3.5.0}\textstyle t=\min\Big\{i\Big \vert \sum\limits_{\ell=i}^s c_\ell\binom{(e-1)+(\ell-i)}{\ell-i}< \binom{(e-1)+i}{i}\Big\}.\end{equation} Observe that
\begin{align}\notag\textstyle
\binom{(e-1)+i}i&\textstyle\le \sum\limits_{\ell=i}^s c_\ell\binom{(e-1)+(\ell-i)}{\ell-i},&&\text{for $0\le i\le t-1$, and}\\\textstyle
\label{second}\sum\limits_{\ell=i}^s c_\ell\binom{(e-1)+(\ell-i)}{\ell-i}&\textstyle<\binom{(e-1)+i}i,&&\text{for $t\le i\le s$};
\end{align}
hence, the inequality (\ref{,max-len}) may be re-written as 
 \begin{equation}\notag\textstyle \lambda_R(R)\le \sum\limits_{i=0}^{t-1}\binom{(e-1)+i}{i}+ \sum\limits_{i=t}^{s}\sum\limits_{\ell=i}^s c_\ell\binom{(e-1)+(\ell-i)}{\ell-i}
.\end{equation}
Reverse the order of summation, let $\alpha=\ell-i$, and count the number of monomials of degree at most $\ell-t$ in $e$ variables to see that
\begin{align*}&\sum\limits_{i=t}^{s}\sum\limits_{\ell=i}^s c_\ell\binom{(e-1)+(\ell-i)}{\ell-i}
=\sum\limits_{\ell=t}^sc_\ell \sum\limits_{i=t}^{\ell} \binom{(e-1)+(\ell-i)}{\ell-i}\\
={}&\sum\limits_{\ell=t}^sc_\ell \sum\limits_{\alpha=0}^{\ell-t} \binom{(e-1)+\alpha}{\alpha}
=\sum\limits_{\ell=t}^sc_\ell \binom{e+\ell-t}{\ell-t};\end{align*}
and therefore, the inequality (\ref{,max-len}) is equivalent to
\begin{equation}\label{,light}\textstyle \lambda_R(R)\le \sum\limits_{i=0}^{t-1}\binom{(e-1)+i}{i}+ \sum\limits_{\ell=t}^sc_\ell \binom{e+\ell-t}{\ell-t}
.\end{equation}
On the other hand, the inequality (\ref{,light}) does indeed hold, because 
$$\lambda_R(R)=\sum\limits_{i=0}^{t-1}\lambda_R(\m^i/\m^{i+1})+\lambda_R(\m^t);$$ 
 \begin{equation}\label{M26.b}\lambda_R(\m^i/\m^{i+1})\le \binom{(e-1)+i}{i},\end{equation} as described at (\ref{Recall});  and 
Proposition~\ref{M24} guarantees that \begin{equation}\label{M26.c}\lambda_R (\m^t)\le 
 \sum\limits_{\ell=t}^s c_{\ell} \binom{e+\ell-t}{\ell-t}.\end{equation} This completes the proof of (\ref{3.5.a}).

\bigskip The parameter $c_s$ is at least $1$; so, one consequence of the inequality (\ref{second}), when $i=t$, is 
$$\textstyle \binom{(e-1)+s-t}{s-t}\le 
c_s\binom{(e-1)+s-t}{s-t}\le\sum\limits_{\ell=t}^s c_\ell\binom{(e-1)+(\ell-t)}{\ell-t}<\binom{(e-1)+t}t.$$The binomial coefficient $\binom{e-1+i}{i}$ counts the number of monomials of degree $i$ in a polynomial ring with $e$ variables; therefore, the most recent inequality forces $s-t<t$; and therefore, \begin{equation}\label{s<=2t-1}s\le 2t-1.\end{equation}

\bigskip 
\noindent (\ref{3.5.b}) 
It is clear that if  $R$ is a compressed  local Artinian ring in the sense of Definition~{\rm\ref{cmped-new*}}, then  equality holds  in (\ref{,max-len}).  

Henceforth, in this proof, we assume that equality holds in (\ref{,max-len}).
We first prove that $R$ is a compressed  local Artinian ring in the sense of  Definition~{\rm\ref{cmped-new*}}; that is, we prove that
\begin{equation}\dim_{\kk}(\m^i/\m^{i+1})=\begin{cases} 
\binom{e-1+i}i,&\text{if $0\le i\le t-1$, and}\\
\sum\limits_{\ell=i}^s c_\ell\binom{(e-1)+(\ell-i)}{\ell-i}, &\text{if $t\le i\le s$}.
\end{cases}\label{26goal}\end{equation}

The inequality (\ref{,light})
is equivalent to (\ref{,max-len}); hence      equality holds in (\ref{,light}) and in all of the intermediary inequalities that lead to  (\ref{,light}). 
In particular, 
\begin{equation}\label{LIGHT}\textstyle \lambda_R(R)= \sum\limits_{i=0}^{t-1}\binom{(e-1)+i}{i}+ \sum\limits_{\ell=t}^sc_\ell \binom{e+\ell-t}{\ell-t}
\text{ and}\end{equation}
\begin{align}\label{early}\textstyle \dim_{\kk}(\m^i/\m^{i+1})=\binom{e-1+i}i,\quad\text{for $0\le i\le t-1$,}\end{align}follow from (\ref{M26.b}), 
and
\begin{equation}\label{M26.d}\lambda_R (\m^t)= 
 \sum\limits_{\ell=t}^s c_{\ell} \binom{e+\ell-t}{\ell-t}\end{equation}
follows from (\ref{M26.c}). The equality (\ref{M26.d}) forces equality to hold in (\ref{M24.c}) when ${j=t}$; hence equality holds in (\ref{M24.b}) when ${j=t}$; and therefore, the injections of (\ref{M24.a}) are isomorphisms when $j=t$ and $1\le k\le s+1-t$.

We apply Lemma~\ref{27.4} to each pair $(A,B)$ with $A=t$ and $0\le B\le s-t$. 
Recall from (\ref{s<=2t-1}) that $s-t\le t-1$; hence (\ref{early}) ensures that hypothesis~(\ref{27.4.a}) of Lemma~\ref{27.4} is in effect. The isomorphisms of 
(\ref{M24.a}), for $j=t$, ensure that hypothesis~(\ref{27.4.b}) is in effect. Conclude that 
\begin{equation}\label{2.5.11.5}\mult: \m^j\cap (0:\m^k)\to \Hom_{R}(\m^{k-1},\socle(R)\cap \m^{j+k-1})\end{equation} is surjective for all $j,k$ with 
\begin{equation}\label{jk}t\le j\le s\quad\text{and}\quad 1\le k\le s-j+1.\end{equation}
Therefore, the injections of (\ref{M24.a}) are isomorphisms and equality holds in (\ref{M24.b}) when $j$ and $k$ satisfy (\ref{jk}); that is, 
\begin{align}\label{M24.a-iso}&\mult: \frac {\m^j\cap (0:\m^k)}{\m^j\cap (0:\m^{k-1})} \xrightarrow{\ \cong\ } \Hom_R\left(\frac{\m^{k-1}}{\m^k},\socle(R)\cap \m^{j+k-1}\right)\\\notag&\text{for $ 
t\le j\le s$\quad\text{and}\quad $1\le k\le s-j+1$.}
\end{align}
Furthermore, equality holds in (\ref{M24.c}) for $t\le j\le s$. In particular,
\begin{align}\notag\dim_{\kk}\m^j/\m^{j+1}={}&\sum\limits_{\ell=j}^sc_{\ell} \binom{e+\ell-j}{\ell-j}-
\sum\limits_{\ell=j+1}^sc_{\ell} \binom{e+\ell-j-1}{\ell-j-1}\\={}&
\sum\limits_{\ell=j}^sc_{\ell} \binom{e+\ell-j-1}{\ell-j},\label{2.5.13}\end{align} for $t\le j\le s$.
Combine (\ref{early}) and (\ref{2.5.13}) to see that (\ref{26goal}) holds.
This completes the proof of (\ref{3.5.b}).

\bigskip \noindent(\ref{xxx.a}) The inequalities of (\ref{second}) and (\ref{s<=2t-1}) hold because of the definition of $t$ which is given in (\ref{3.5.0}). We assume equality holds in (\ref{,max-len}); so (\ref{26goal}) holds. We conclude that $t={\tt v}(R)$ and $s\le 2{\tt v}(R)-1$.
\end{proof}

In  Corollary~\ref{large powers} we 
 describe the annihilator of each large power of the maximal ideal of $R$, when $R$ is a compressed  local Artinian ring.  
This information is used heavily in the proofs of  Corollary~\ref{I-cor} and Lemma~\ref{FirstStep}.

\begin{corollary}\label{large powers}
If  $(R,\m,\kk)$ is a compressed  local Artinian ring with  top socle degree $s$, then the following statements hold.
\begin{enumerate}[\rm(a)]
\item\label{2.5.cii} If ${\tt v}(R)\le j\le s$, then $(0:\m^j)=\m^{s-j+1}$.
\item\label{2.8} If $1\le j\le s+1$, then 
$\m^{j}:\m=\m^{j-1}+\socle(R)$.
\item\label{4.5.c} If $R$ is also a level ring {\rm(}that is, if $\socle(R)=\m^s${\rm)}, then $$(0:\m^j)=\m^{s-j+1}\quad\text{ for $0\le j\le s+1$.}$$
\end{enumerate}
\end{corollary}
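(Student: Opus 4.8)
The plan is to extract everything from the surjectivity statement~(\ref{2.5.11.5})--(\ref{jk}) that was proved inside Theorem~\ref{comp-ring}, together with Observation~\ref{obs}. For part~(\ref{2.5.cii}), fix $j$ with ${\tt v}(R)\le j\le s$. The containment $\m^{s-j+1}\subseteq (0:\m^j)$ is immediate since $\m^{s+1}=0$. For the reverse containment, note that $0:\m^j=\m^0\cap(0:\m^j)$, and I want to compare this with $\m^{s-j+1}\cap(0:\m^j)$. The point is that the composite surjections of~(\ref{2.5.11.5}), applied with the parameter $j$ there equal to $s-j+1$ and $k$ running up, together with~(\ref{M24.a-iso}), pin down the dimension of each quotient $\big(\m^a\cap(0:\m^{j-a})\big)\big/\big(\m^a\cap(0:\m^{j-a-1})\big)$ in the filtration of $0:\m^j$ by powers of $\m$; summing these and comparing with $\dim_\kk(0:\m^j)=\lambda_R(R)-\lambda_R(\m^j)$ (both computable from~(\ref{26goal})) should force $\m^a\cap(0:\m^{j-a})=\m^a\cap(0:\m^{j-a-1})$ for all $a<s-j+1$, i.e. $(0:\m^j)\subseteq\m^{s-j+1}$. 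Alternatively, and perhaps more cleanly, one can argue by downward induction on $a$ from $a=s-j+1$: if $x\in\m^a\cap(0:\m^{j})$ with $a<s-j+1$, then $\mult_x$ is a homomorphism $\m^{j}\to\socle(R)\cap\m^{a+j}$, but $a+j<s+1$ so $\socle(R)\cap\m^{a+j}$ must be analyzed; hypothesis ${\tt v}(R)\le j$ and the fact that $s\le 2{\tt v}(R)-1$ (Theorem~\ref{comp-ring}(\ref{xxx.a})) are exactly what make the relevant $\Hom$-duality isomorphism available, letting one conclude $x\in\m^{a+1}$.

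For part~(\ref{2.8}), $\m^{j-1}+\socle(R)\subseteq \m^j:\m$ is clear. Conversely, take $x\in\m^j:\m$. If $j\le{\tt v}(R)$, then $\m^j:\m=\m^{j-1}$ already by Observation~\ref{obs}, which certainly lies in $\m^{j-1}+\socle(R)$. If $j>{\tt v}(R)$, write $j-1\ge{\tt v}(R)$; by part~(\ref{2.5.cii}), $\m^j=\m^{s-(s-j)}$ relates to $0:\m^{s-j}$, and $\m x\subseteq\m^j$ means $x\in\m^{j-1}:_{?}$; I would run the same duality: $x\in 0:\m^{?}$ or $x$ maps into $\m^{j-1}/\m^j$ under multiplication, and the isomorphism~(\ref{M24.a-iso}) with the appropriate $(j,k)$ identifies the ``new'' part of $\m^j:\m$ beyond $\m^{j-1}$ with a piece of $\socle(R)$. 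Concretely, consider the exact filtration $\m^{j-1}\subseteq(\m^j:\m)$ and show the quotient injects into $\socle(R)\cap\m^{j-1}$ via the residue map, using that any $x\in\m^j:\m$ with $x\notin\m^j$ has $\bar x\in\m^{j-1}/\m^j$ killed by $\m$, hence $\bar x$ lifts to a socle element modulo $\m^j$; the containment $\m^j\subseteq\m^{j-1}$ then gives $x\in\m^{j-1}+\socle(R)$.

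Part~(\ref{4.5.c}) is a clean corollary: if $\socle(R)=\m^s$, then for ${\tt v}(R)\le j\le s$ part~(\ref{2.5.cii}) already gives $(0:\m^j)=\m^{s-j+1}$; for the remaining range $0\le j\le{\tt v}(R)-1$ one has $j\le{\tt v}(R)$, so Observation~\ref{obs} gives $(\m^{s+1}:\m^{s+1-j})$-type statements—more directly, iterate $\m^a:\m=\m^{a-1}$ (valid for $a\le{\tt v}(R)$ by Observation~\ref{obs}) to get $0:\m^j=(\cdots((\m^{s+1}:\m):\m)\cdots:\m)=\m^{s+1-j}$, noting that at each step the relevant exponent stays $\le{\tt v}(R)$ because $j\le{\tt v}(R)-1$ forces $s+1-j\ge s+2-{\tt v}(R)\ge{\tt v}(R)+1>{\tt v}(R)$... so one must instead combine: use part~(\ref{2.5.cii}) to reach down to $\m^{s-{\tt v}(R)+1}$ at $j={\tt v}(R)$, then for smaller $j$ peel off using $\m^{s-{\tt v}(R)+2}:\m=\m^{s-{\tt v}(R)+1}$ etc., which is legitimate since $s-{\tt v}(R)+1\le{\tt v}(R)$ by $s\le 2{\tt v}(R)-1$. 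The main obstacle, as this last wrinkle signals, is keeping careful track of which exponents fall in the range where Observation~\ref{obs} applies versus where one needs the compressed-duality isomorphism~(\ref{M24.a-iso}); once the bookkeeping of the two regimes $i\le{\tt v}(R)$ and $i\ge{\tt v}(R)$ is set up against the bound $s\le 2{\tt v}(R)-1$, each containment is a short dimension count or a one-line iteration.
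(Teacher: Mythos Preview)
Your outline names the right inputs---the surjections~(\ref{2.5.11.5}), the isomorphisms~(\ref{M24.a-iso}), and Observation~\ref{obs}---but each part has a genuine gap.

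\textbf{Part~(\ref{2.5.cii}).} Your ``alternative'' approach gives nothing: if $x\in(0:\m^j)$ then $\mult_x\colon\m^j\to\socle(R)\cap\m^{a+j}$ is the zero map, so no conclusion about $x$ follows. The paper's argument runs the duality in the opposite direction. Apply~(\ref{2.5.11.5}) with $k=s-j+1$: the map $\mult\colon\m^j\to\Hom_R(\m^{s-j}/\m^{s-j+1},\m^s)$ is \emph{surjective}, so the functionals $\mult_\theta$ separate points of $\m^{s-j}/\m^{s-j+1}$, whence
\[
\frac{\m^{s-j}\cap(0:\m^j)}{\m^{s-j+1}}\subseteq\bigcap_f\ker f=0.
\]
This gives $\m^{s-j}\cap(0:\m^j)=\m^{s-j+1}$ for every $j$ in the range. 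Now descending induction on $j$ (not on the $\m$-power) removes the intersection: the base $j=s$ is $(0:\m^s)=\m$, and if $(0:\m^{j+1})=\m^{s-j}$ then $(0:\m^j)\subseteq(0:\m^{j+1})=\m^{s-j}$, so $(0:\m^j)=\m^{s-j}\cap(0:\m^j)=\m^{s-j+1}$.

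\textbf{Part~(\ref{2.8}).} Your argument for $j>{\tt v}(R)$ is circular: writing $\bar x\in\m^{j-1}/\m^j$ presupposes $x\in\m^{j-1}$, which is what you must prove. The paper instead runs an iterative claim. Since $(\m^j:\m)\subseteq(0:\m^{s-j+2})$, start with $\theta\in(\m^j:\m)\cap(0:\m^a)$ for $a=s-j+2$; then $\theta\m^{a-1}\subseteq\m^{a+j-2}\cap\socle(R)$ (using $\theta\m\subseteq\m^j$ and $\theta\m^a=0$), so $\mult_\theta\in\Hom_R(\m^{a-1}/\m^a,\socle(R)\cap\m^{a+j-2})$. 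By~(\ref{M24.a-iso}) there is $\theta'\in\m^{j-1}\cap(0:\m^a)$ with $\mult_{\theta'}=\mult_\theta$ on $\m^{a-1}$, hence $\theta-\theta'\in(0:\m^{a-1})$. Iterating down to $a=1$ writes $\theta$ as an element of $\m^{j-1}$ plus an element of $(0:\m)=\socle(R)$.

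\textbf{Part~(\ref{4.5.c}).} Your peeling tangles because the colon identity $(0:\m^j)=(0:\m^{j-1}):\m$ goes the wrong way for recovering $(0:\m^{j-1})$ from $(0:\m^j)$. The paper avoids this: the level hypothesis makes part~(\ref{2.8}) read $\m^j:\m=\m^{j-1}$ for \emph{all} $1\le j\le s+1$. Then for $x$ with $x\m^i=0$, descending induction on $a$ shows $x\m^a\subseteq\m^{s+a+1-i}$ for $0\le a\le i$ (the step is $x\m^{a-1}\subseteq\m^{s+a+1-i}:\m=\m^{s+a-i}$); at $a=0$ one gets $x\in\m^{s+1-i}$.
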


\begin{proof} The ring $R$ is compressed, local, and Artinian; consequently, $R$ satisfies all of the statements in the proof of Theorem~\ref{comp-ring} and, according to  Theorem~\ref{comp-ring}.(\ref{xxx.a}),  the parameter ${\tt v}(R)$ is equal to the ``$t$'' of (\ref{3.5.0}).

\medskip\noindent 
(\ref{2.5.cii})   Apply (\ref{2.5.11.5}) with $t\le j\le s$ and $k=s-j+1$ to see that
$$\mult:\m^j \to \Hom_R\left(\frac{\m^{s-j}}{\m^{s-j+1}},\m^s\right)$$ is a surjection. It follows that 
$$\textstyle \frac{\m^{s-j}\cap (0:\m^j)}{\m^{s-j+1}}\subseteq \bigcap\ker f=0,$$
as $f$ roams over $\Hom_R\left(\frac{\m^{s-j}}{\m^{s-j+1}},\m^s\right)$. The final equality is a statement about homomorphisms of  vector spaces. 
Thus, 
$$\m^{s-j}\cap (0:\m^j)\subseteq \m^{s-j+1};$$hence,
$$\m^{s-j+1}\subseteq \m^{s-j}\cap (0:\m^j)\subseteq \m^{s-j+1}\quad \text{and}$$
\begin{equation}\label{so far} t\le j\le s\implies \m^{s-j}\cap (0:\m^j)=\m^{s-j+1}.\end{equation} Apply descending induction on $j$ to see that
\begin{equation}\label{goalm30} t\le j\le s\implies (0:\m^j)=\m^{s-j+1}.\end{equation}
Indeed, (\ref{goalm30}) holds when $j=s$. Assume that (\ref{goalm30}) holds when $j+1$. We prove that (\ref{goalm30}) holds when $j$. Observe that 
\begin{equation}\label{eol}(0:\m^j)\subseteq (0:\m^{j+1})= \m^{s-j}.\end{equation}(The final equality is due to the induction hypothesis.) Thus,
$$(0:\m^j)=(0:\m^j)\cap \m^{s-j}=\m^{s-j+1}.$$ The equality on the left is due to (\ref{eol}) and the equality on the right is due to (\ref{so far}). 

\bigskip \noindent(\ref{2.8}) 
We saw in Observation~\ref{obs} that 
$$(\m^j:\m)=\m^{j-1}=\m^{j-1}+\socle(R),$$ for $1\le j\le {\tt v}(R)$. Also, the assertion of (\ref{2.8}) is obvious at $j=s+1$.
The parameter ${\tt v}(R)$ continues to equal to the ``$t$'' of (\ref{3.5.0}). 
We prove that 
if $t+1\le j\le s$, then 
$$\m^{j}:\m=\m^{j-1}+\socle(R).$$
It suffices to prove the inclusion ``$\subseteq$''. To do this, it suffices to prove the following claim.

\medskip
\noindent Claim. {\it If $2\le a\le s-j+2$ and $\theta\in (\m^{j}:\m) \cap(0:\m^a)$, then there exists an element $\theta'$ in $\m^{j-1}\cap (0:\m^a)$ with $\theta-\theta'\in (0:\m^{a-1})$.}

\medskip\noindent We prove the claim. Observe that multiplication by $\theta$ is an element of 
$$\Hom_R(\m^{a-1}/\m^a,\socle(R)\cap \m^{a+j-2}).$$ Of course, we know from 
(\ref{M24.a-iso}),  that there is an element $\theta'\in \m^{j-1}\cap (0:\m^a)$ with 
multiplication by $\theta'$  equal to multiplication by $\theta$ on $\m^{a-1}$.

\bigskip \noindent(\ref{4.5.c}) One direction of assertion (\ref{4.5.c}) is obvious. We prove the other direction. The special hypothesis $\socle(R)=\m^s$ of (\ref{4.5.c}) guarantees that $\socle (R)\subseteq \m^a$ for $0\le a\le s$; and therefore, under this special hypothesis, assertion (\ref{2.8}) becomes \begin{equation}\label{euq} 1\le j\le s+1 \implies 
\m^{j}:\m=\m^{j-1}.\end{equation} Fix an element $x$ in $R$ and an integer $i$ with $0\le i\le s$ and $x\m^i=0$. We use descending induction to prove  that 
\begin{equation}\label{June24} 0\le a\le i\implies x\m^a\subseteq \m^{s+a+1-i}.\end{equation} It is clear that (\ref{June24}) holds at $a=i$. Suppose $1\le a\le i$ and (\ref{June24}) holds at $a$. Then $$x\m^{a-1}\subseteq (\m^{s+a+1-i}:\m)= \m^{s+a-i}.$$ (Use the induction hypothesis (\ref{June24}) for the inclusion and   (\ref{euq}) for the equality.) Thus (\ref{June24}) holds at $a=i-1$ and the induction is complete. Apply (\ref{June24}) at $a=0$ to see that $x\in \m^{s+1-i}$.
\end{proof}

Associated graded objects are discussed in \ref{agr}. The following result is shown by
Iarrobino \cite[Cor.~3.8]{I84} 
in the equicharacteristic case. This result is confirmation that
Definition~\ref{cmped-new*} (or equivalently, equality in (\ref{,max-len}), or equivalently, equality in  (\ref{notag}))
is  the correct intrinsic definition of a compressed local Artinian ring.

\begin{corollary}\label{I-cor} 
Let $(R,\m,\kk)$ be a 
local Artinian ring.
Then 
$R$ is a  
compressed    
ring  if and only if the associated graded ring $\agr{R}$ is a compressed ring 
and $R$ and $\agr{R}$ have the same socle polynomial. 
\end{corollary}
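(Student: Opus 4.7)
The key observation is that $R$ and $\agr{R}$ tautologically share the embedding dimension $e$, the Hilbert function $i\mapsto\dim_{\kk}(\m^i/\m^{i+1})$, and therefore the top socle degree $s$. Since Definition~\ref{cmped-new*} expresses \emph{compressed} purely in terms of $e$, $s$, the Hilbert function, and the socle polynomial, $R$ satisfies Definition~\ref{cmped-new*} if and only if $\agr{R}$ does, \emph{provided} the two rings have the same socle polynomial. This yields the $(\Leftarrow)$ direction of the corollary on the nose, and reduces the $(\Rightarrow)$ direction to a single statement: if $R$ is compressed, then its socle polynomial coincides with that of $\agr{R}$.

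To establish that remaining statement, the plan is to compute the graded socle of $\agr{R}$ directly in terms of $R$-data. The maximal ideal of $\agr{R}$ is $\bigoplus_{j\ge1}\m^j/\m^{j+1}$, so unwinding definitions, $x+\m^{i+1}\in\m^i/\m^{i+1}$ lies in $\socle(\agr{R})_i$ precisely when $\m x\subseteq \m^{i+2}$; that is,
$$\socle(\agr{R})_i=\frac{\m^i\cap(\m^{i+2}:\m)}{\m^{i+1}}.$$
The compressedness of $R$ enters here through Corollary~\ref{large powers}.(\ref{2.8}): for $j=i+2$ with $0\le i\le s-1$ we have $\m^{i+2}:\m=\m^{i+1}+\socle(R)$, and the modular law rewrites the previous display as
$$\socle(\agr{R})_i\ \cong\ \frac{\m^{i+1}+\bigl(\m^i\cap\socle(R)\bigr)}{\m^{i+1}}\ \cong\ \frac{\socle(R)\cap\m^i}{\socle(R)\cap\m^{i+1}},$$
which is precisely the $i$-th coefficient of the socle polynomial of $R$. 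The degree $i=s$ case is handled directly since $\m^s\subseteq\socle(R)$ and $\m^{s+2}=0$.

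The main obstacle is exactly the middle step above: for a general Artinian local ring the natural map $(\socle(R)\cap\m^i)/(\socle(R)\cap\m^{i+1})\hookrightarrow\socle(\agr{R})_i$ is only injective, so socle elements can \emph{appear} in the associated graded ring that are not present in any graded sense in $R$ itself. The identity $\m^j:\m=\m^{j-1}+\socle(R)$ of Corollary~\ref{large powers}.(\ref{2.8})---itself the payoff of the duality machinery developed in Section~\ref{mult} under the compressed hypothesis---is precisely what rules out such enlargement and delivers the desired equality of socle polynomials. Given that, both directions of the corollary follow from the opening observation.
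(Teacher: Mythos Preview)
Your proof is correct and follows essentially the same approach as the paper's own argument: both reduce the $(\Leftarrow)$ direction to the tautological equality of Hilbert functions, and both handle $(\Rightarrow)$ by computing $\socle(\agr{R})_i=(\m^i\cap(\m^{i+2}:\m))/\m^{i+1}$, invoking Corollary~\ref{large powers}.(\ref{2.8}) to rewrite $\m^{i+2}:\m$ as $\m^{i+1}+\socle(R)$, and then applying the modular law/second isomorphism theorem to match this with the coefficients of the socle polynomial of $R$. Your version is slightly more explicit about the boundary case $i=s$, but the substance is identical.
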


\begin{proof} $(\Leftarrow)$ This direction is obvious. Indeed, the Hilbert function of $R$ is always equal to the Hilbert function of $\agr{R}$ and the hypothesis asserts that the relationship of Definition~\ref{cmped-new*}
holds between $h_{\agr{R}}$ and the socle polynomial of $\agr{R}$.

\smallskip\noindent$(\Rightarrow)$  As described above, it suffices to show $R$ and $\agr{R}$ have the same socle polynomial. 
The isomorphism theorem $I/(I\cap J)\cong (I+J)/J$ ensures that 
$$\frac{\socle(R)\cap \m^i}{\socle(R)\cap \m^{i+1}}\cong \frac{(\socle(R)\cap \m^i)+\m^{i+1}}{\m^{i+1}};$$hence the socle polynomial $R$, defined in \ref{2.2}.(\ref{2.2.d}), is also  equal to 
$$\sum\limits_{i=0}^s \dim_{\kk}\frac{(\socle(R)\cap \m^i)+\m^{i+1}}{\m^{i+1}} z^i,$$where $s$ is the top socle degree of $R$.
On the other hand, the socle polynomial of the graded local ring $\agr{R}$ is 
$$\sum\limits_{i=0}^s \dim_{\kk}\frac{\m^i\cap (\m^{i+2}: \m)}{\m^{i+1}} z^i.$$ The ring $R$ is compressed; hence Corollary~\ref{large powers}.(\ref{2.8}) guarantees that  
\begin{equation}\notag\m^{j}:\m=\m^{j-1}+\socle(R)\quad\text{for $1\le j\le s$}\end{equation} and the two socle polynomials are equal.
\end{proof}

The statement of the main result, Theorem~\ref{5.1},  depends on the relationship between $s$ and the invariant ${\tt v}(R)$. 
 Recall from Theorem~\ref{comp-ring}.(\ref{xxx.a}) that ${s\le 2{\tt v}(R)-1}$. We show in Observation~\ref{xxx.b} that  the critical situation is 
${s= 2{\tt v}(R)-1}$. 
The first step in the critical situation is taken in Lemma~\ref{FirstStep}. This step is the main ingredient  in the proof of Lemma~\ref{4.2.c}.

\begin{lemma}\label{FirstStep}Let
$(R, \m,\kk)$ be a compressed local Artinian ring
with 
embedding dimension $e$ and 
top socle degree $s$.   
Assume  that  
$s$ is odd and that $s=2{\tt v}(R)-1$. 
Decompose the maximal ideal $\m$ as the sum of two subideals  $\m=(x_1)+\m'$ with $x_1$ a minimal generator of $\m$ and  $\mu(\m')=e-1$. Then
$$
x_1^{\frac{s-1}2} [\ann_R(\m') \cap \m^{\frac{s+1}2}] = \m^s.
$$\end{lemma}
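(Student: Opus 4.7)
The inclusion $x_1^{(s-1)/2}[\ann_R(\m')\cap\m^{(s+1)/2}]\subseteq\m^s$ is immediate from $\m^{(s-1)/2}\cdot\m^{(s+1)/2}\subseteq\m^{s}$. So I focus on the reverse. Write $v={\tt v}(R)=(s+1)/2$; the task is to show that for every $a\in\m^s$ there exists $b\in\ann_R(\m')\cap\m^v$ with $bx_1^{v-1}=a$.

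By Corollary~\ref{large powers}.(\ref{2.5.cii}) applied with $j=v$, $(0:\m^v)=\m^{s-v+1}=\m^v$, so $\m^v\cap(0:\m^v)=\m^v$. Plugging $(j,k)=(v,v)$ into the isomorphism (\ref{M24.a-iso}) of Theorem~\ref{comp-ring} yields a surjection
$$\mult\colon\m^v\twoheadrightarrow \Hom_{\kk}\!\bigl(\m^{v-1}/\m^v,\,\m^s\bigr).$$
Since $v-1\le{\tt v}(R)-1$, the quotient $\m^{v-1}/\m^v$ has as a $\kk$-basis $\mathcal B$ the monomials of degree $v-1$ in the generators $x_1,\dots,x_e$, with $x_1^{v-1}\in\mathcal B$. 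Given $a\in\m^s$, I would build $\phi_a\in\Hom_{\kk}(\m^{v-1}/\m^v,\m^s)$ by decreeing $\phi_a(x_1^{v-1})=a$ and $\phi_a(m)=0$ for every other $m\in\mathcal B$, and then pick $b\in\m^v$ with $\mult_b=\phi_a$. This $b$ satisfies $bx_1^{v-1}=a$ and $bm=0$ for all $m\in\mathcal B\setminus\{x_1^{v-1}\}$.

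A short calculation then establishes that, for $j\ge 2$, $bx_j\in(0:\m^{v-2})\cap\m^{v+1}$: given $y\in\m^{v-2}$, the element $x_jy\in\m^{v-1}$ is a $\kk$-combination of monomials of degree $v-1$ each divisible by $x_j$ (so unequal to $x_1^{v-1}$), modulo an element of $\m^v$; the first batch is killed by $b$ by construction, and the second is killed because $b\m^v\subseteq\m^{2v}=0$. In particular, when $v=2$ we have $\m^{v-2}=R$, so already $bx_j=0$ and $b\in\ann_R(\m')\cap\m^v$; this settles the base case.

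For $v\ge 3$, the main obstacle is to promote $bx_j\in(0:\m^{v-2})\cap\m^{v+1}$ to $bx_j=0$. The plan is to replace $b$ by $b-b_0$ where $b_0\in\ker\mult=\m^v\cap(0:\m^{v-1})$ is arranged so that $b_0x_j=bx_j$ for every $j\ge 2$; then $b-b_0\in\ann_R(\m')\cap\m^v$ and, since $b_0\in(0:\m^{v-1})$ forces $b_0x_1^{v-1}=0$, we still have $(b-b_0)x_1^{v-1}=a$. The hard part is producing such a $b_0$ simultaneously for all $j\ge 2$: I would obtain it from the iterated isomorphisms (\ref{M24.a-iso}) at the pairs $(v,v-1),(v,v-2),\dots$ combined with the rolling mechanism of Lemma~\ref{27.4}, which identify the successive subquotients of $\m^v\cap(0:\m^{v-1})$ with $\Hom$-spaces landing in pieces of $\socle(R)$; the fact that $x_j\notin\m^v=(0:\m^v)$ (again Corollary~\ref{large powers}.(\ref{2.5.cii})) forces multiplication by $x_j$ to realize the required socle-valued targets. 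The numerical identity $s=2{\tt v}(R)-1$ is precisely what makes the dimensions in these identifications cooperate so that the correction $b_0$ exists for all $j\ge 2$ at once.
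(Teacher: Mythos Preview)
Your strategy is the paper's strategy: use the isomorphism~(\ref{M24.a-iso}) at $(j,k)=(v,v)$ to produce $b=\theta_0\in\m^v\cap(0:\m^v)$ with $\theta_0 x_1^{v-1}=\sigma$ and $\theta_0\m'\m^{v-2}=0$, then correct by elements coming from~(\ref{M24.a-iso}) at $(v,v-1),(v,v-2),\ldots$. The first two paragraphs and the case $v=2$ are fine and match the paper.

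The last paragraph, however, is not a proof. You assert that a correction $b_0\in\m^v\cap(0:\m^{v-1})$ with $b_0x_j=bx_j$ for all $j\ge2$ exists, but the phrases ``rolling mechanism'' and ``dimensions cooperate'' do not construct it, and the remark about $x_j\notin\m^v$ is not what drives the argument. What is actually required is an explicit induction: build $\theta_1,\ldots,\theta_{v-2}$ with $\theta_i\in\m^v\cap(0:\m^{v-i})$ and $(\theta_0+\cdots+\theta_i)\m'\m^{v-i-2}=0$ at each stage. The point you do not address is why the next $\Hom$-space is available: the inductive hypothesis $(\theta_0+\cdots+\theta_i)\m'\m^{v-i-2}=0$ is exactly what guarantees that $u\mapsto -(\theta_0+\cdots+\theta_i)u$, for $u\in\m'\m^{v-i-3}$, takes values in $\socle(R)\cap\m^{s-i-1}$, so that (after sending $x_1^{v-i-2}\mapsto 0$) one obtains a well-defined element of $\Hom_R(\m^{v-i-2}/\m^{v-i-1},\socle(R)\cap\m^{s-i-1})$; then~(\ref{M24.a-iso}) at $(j,k)=(v,v-i-1)$ supplies $\theta_{i+1}$. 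After $\theta_{v-2}$ is built, $\Theta=\theta_0+\cdots+\theta_{v-2}$ satisfies $\Theta\m'=0$ and $\Theta x_1^{v-1}=\sigma$, and your $b_0$ is $-(\theta_1+\cdots+\theta_{v-2})$.
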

\begin{proof}Let $t$ denote ${\tt v}(R)$, which by hypothesis is equal to $(s+1)/2$. It is clear that
$$x_1^{t-1} [\ann_R(\m') \cap \m^t] \subseteq \m^s.$$
For the other direction, let $\sigma$ be an element of $\m^s$.
We will construct an element $\Theta$ of $\ann_R(\m') \cap \m^t$ such that $x_1^{t-1}\Theta=\sigma$. We build $\Theta$ as $\theta_0+\cdots+\theta_{t-2}$, where, for each $i$, 
\begin{equation}\label{build}\begin{cases}
\theta_i\in \m^t\cap (0:\m^{t-i}),\\
(\theta_0+\cdots+\theta_i)x_1^{t-1}=\sigma,&\text{and}\\
(\theta_0+\cdots+\theta_i)\m'\m^{t-i-2}=0.\end{cases}\end{equation}

We first build $\theta_0$. 
 Consider the homomorphism $$\phi_0\in \Hom_R(\m^{t-1}/\m^t, \socle(R)\cap \m^{s}),$$ which is given by $$\phi_0(\overline{\m'\m^{t-2}})=0\quad\text{and}\quad \phi_0(\overline{x_1^{t-1}})=\sigma.$$ (Keep in mind that $\m^{t-1}/\m^t$ and $\overline{\m'\m^{t-2}}\oplus \kk\overline{x_1^{t-1}}$
are isomorphic 
as $R$-modules. At this point $\bar{\phantom{x}}$ means mod $\m^t$.) Apply (\ref{M24.a-iso}), with $j=k=t$ to obtain an element $\theta_0\in \m^t\cap \ann (\m^t)$ with $x_1^{t-1}\theta_0=\sigma$ and $\theta_0\m'\m^{t-2}=0$.

Suppose $0\le i\le t-3$ and elements $\theta_0,\dots \theta_i$, which satisfy (\ref{build}), have been identified. We now build $\theta_{i+1}$. Consider the homomorphism $$\phi_{i+1}\in \Hom_R(\m^{t-i-2}/\m^{t-i-1}, \socle(R)\cap \m^{s-i-1}),$$
which is given by $$\phi_{i+1}(\bar u)=-(\theta_0+\cdots+\theta_i)u,\quad\text{for $u\in 
\m'\m^{t-i-3}$,\quad and}\quad \phi_{i+1}(\overline{x_1^{t-i-2}})=0.$$ (At this point $\bar{\phantom{x}}$ means mod $\m^{t-i-1}$. We have taken advantage of a direct sum decomposition of $\m^{t-i-2}/\m^{t-i-1}$ to define $\phi_{i+1}$. The image of $\phi_{i+1}$ is contained in the socle of $R$ because of the properties of the earlier $\theta$'s as described in (\ref{build}).) Apply (\ref{M24.a-iso}), with $j=t$ and $k=t-i-1$ to obtain an element 
$$\begin{cases} 
\theta_{i+1}\in \m^t\cap \ann \m^{t-i-1}&\text{with}\\
(\theta_0+\cdots +\theta_{i+1})x_1^{t-1}=\sigma&\text{and}\\
(\theta_0+\cdots +\theta_{i+1})(\m'\m^{t-i-3})=0.
\end{cases}$$Iterate this procedure to find $\theta_{t-2}$ and thereby complete the proof.
\end{proof}

\section{Golod homomorphisms.}\label{GH}

In this paper 
 we exhibit a Golod homomorphism from a complete intersection onto  a compressed local Artinian ring $R$ and then use facts about Golod homomorphisms to draw conclusions about the Poincar\'e series of $R$-modules. 
The present section is mainly concerned with 
techniques from homological algebra that can be used to prove that a homomorphism is Golod.
The hypothesis ``compressed'' is not used anywhere in the present section.

There are  numerous 
 definitions of Golod 
homomorphism (see for example \cite
{Av86}); we give the version involving trivial Massey operations,  
found, for example, in \cite{G72}. 
In Lemma~\ref{RS1.2} we record a result from \cite{RS} which  shows how to
use homological algebra to prove  that trivial Massey operations exist.
Most of the section is about homological algebra. Indeed, in Lemmas \ref{Liana!} and \ref{4.2.a} we prove that various maps of $\Tor$ are zero. 
Lemma~\ref{4.2.a} is used in Observation~\ref{xxx.b} to show that  
if the top socle degree of a local Artinian ring $R$ is small compared to the invariant ${\tt v}(R)$ of \ref{2.2}.(\ref{v(R)}), then $R$ is a Golod ring. 
 Lemmas~\ref{6.19} and \ref{6.20} are a short study of the effect on $\Tor$ associated to taking a hypersurface section.
 The section concludes with Theorem~\ref{Levin} which is a well-known result that 
exhibits the common denominator for all Poincar\'e series $P^R_M(z)$ when there is a Golod homomorphism from a local hypersurface ring onto $R$ and $M$ roams over all finitely generated $R$-modules.
 \begin{definition}\cite[1.1]{RS} Let  $\kappa:(P,\mathfrak p,\kk)\to (R,\mathfrak m,\kk)$ be a 
surjective homomorphism of local rings and  $\mathcal D$ be an associative,  graded-commutative, Differential Graded (DG) algebra with divided powers,  which is also a homogeneous minimal resolution of $\kk$ by free $P$-modules.
Let $(\mathcal A,\partial)$ denote $\mathcal D\t_PR$. If $x$ is a homogeneous element in $\mathcal A_\ell$, then let $|x|$ denote the degree $\ell$ of $x$ and  $\bar x$ denote $(-1)^{|x|+1}x$.
Let ${\mathbf {h}}=\{h_i\}_{i\ge 1}$ be a homogeneous basis of the graded $\kk$-vector space $\HH_{\ge1}(\mathcal A)$. 
The homomorphism $\kappa:P\to R$ is {\it Golod} 
if there is a function $\mu: \bigsqcup_{n=1}^\infty {\mathbf {h}}^n \to\mathcal A$
which satisfies: \begin{enumerate}[\rm(a)]
\item $\mu(h)$  is a cycle in the homology class of $h$ for each $h\in  {\mathbf {h}}$,
\item $\partial \mu(h_1,\dots,h_n)= \sum_{i=1}^{n-1}\overline{\mu(h_1,\dots, h_i)}\mu(h_{i+1}, \dots , h_n)$ for each $n$ with $2\le n$, and
\item $\mu({\mathbf {h}}^n)\subseteq \mathfrak m \mathcal A$ for each positive $n$. 
\end{enumerate}\end{definition}

Lemma~\ref{RS1.2} is our main tool for proving that a homomorphism is Golod.

\begin{lemma}\cite[Lem.~1.2]{RS}\label{RS1.2} Let  
$\kappa:(P,\mathfrak p,\kk)\to (R,\mathfrak m,\kk)$ be 
 a  
surjective homomorphism of local rings.  
If there exists a positive integer $a$ such that{\rm:}
\begin{enumerate}[\rm(a)]
\item\label{C4.2.a} the map $\Tor^ P_i(R,\kk)\to\Tor^ P_i(R/\mathfrak m^a,\kk)$, induced by the canonical quotient map $R\to R/\mathfrak m^a$, is zero for all positive $i$, and
\item\label{C4.2.b} the map $\Tor^P_i(\mathfrak m^{2a},\kk)\to \Tor^P_i(\mathfrak m^a,\kk)$, induced by the inclusion $\m^{2a}\subseteq \m^a$, 
is zero for all non-negative integers $i$,\end{enumerate}
then $\kappa$ is a Golod homomorphism.
\end{lemma}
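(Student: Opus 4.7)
The plan is to construct a trivial Massey operation on $\mathcal{A} = \mathcal{D} \t_P R$ with all values in $\m^a \mathcal{A}$. First I would homologically reinterpret conditions (a) and (b). Because each $\mathcal{D}_i$ is $P$-free, the subcomplex $\m^a \mathcal{A}$ of $\mathcal{A}$ is canonically $\mathcal{D} \t_P \m^a$, so $\HH_i(\m^a\mathcal{A}) = \Tor^P_i(\m^a,\kk)$, and likewise $\m^{2a}\mathcal{A} \cong \mathcal{D} \t_P \m^{2a}$ and $\mathcal{A}/\m^a\mathcal{A} \cong \mathcal{D} \t_P (R/\m^a)$. Hypothesis (a) then says every positive-degree cycle $z \in \mathcal{A}$ can be adjusted modulo a boundary $\partial w$ to lie in $\m^a \mathcal{A}$, and hypothesis (b) says every cycle in $\m^{2a}\mathcal{A}$ is the boundary of an element of $\m^a\mathcal{A}$.

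For each $h \in \mathbf{h}$, I would use (a) to choose a cycle representative $\mu(h) \in \m^a \mathcal{A}$ in the class of $h$. Then I would induct on $n$: assuming $\mu$ has been defined on all tuples of length less than $n$, with values in $\m^a \mathcal{A}$ and satisfying the required Massey relations, form
\[
c(h_1,\dots,h_n)=\sum_{i=1}^{n-1}\overline{\mu(h_1,\dots,h_i)}\,\mu(h_{i+1},\dots,h_n).
\]
Each summand lies in $(\m^a\mathcal{A})\cdot(\m^a\mathcal{A}) \subseteq \m^{2a}\mathcal{A}$, so $c \in \m^{2a}\mathcal{A}$. A standard telescoping calculation, using the Leibniz rule on $\mathcal{A}$, the sign convention $\bar{x} = (-1)^{|x|+1} x$, and the inductively available Massey relations for shorter tuples, shows $\partial c = 0$. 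Hypothesis (b) then supplies an element $\mu(h_1,\dots,h_n) \in \m^a\mathcal{A} \subseteq \m\mathcal{A}$ with $\partial \mu(h_1,\dots,h_n) = c$, completing the induction and producing a trivial Massey operation, whence $\kappa$ is Golod.

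The main technical obstacle is the cycle verification $\partial c = 0$: it is the classical Massey-product telescoping identity, but one must track carefully the $(-1)^{|x|+1}$ twist built into $\bar{\phantom{x}}$ together with the graded-commutative signs in $\mathcal{A}$, in order to see the interior terms cancel in pairs. A secondary bookkeeping point is the identification $\m^b \mathcal{A} \cong \mathcal{D} \t_P \m^b$ for $b\in\{a,2a\}$, which is what permits hypothesis (b) to be read as a statement about cycles and boundaries inside $\mathcal{A}$; it is immediate from $P$-freeness of each $\mathcal{D}_i$, but it must be invoked explicitly to connect the $\Tor$ vanishing of the hypothesis with the bounding step in the Massey construction.
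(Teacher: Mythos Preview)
Your proposal is correct and is exactly the standard argument; the paper itself does not give a proof of this lemma but simply cites it as \cite[Lem.~1.2]{RS}, so there is no in-paper proof to compare against. Your reinterpretation of hypotheses (a) and (b) in terms of cycles and boundaries of the subcomplexes $\m^a\mathcal{A}$ and $\m^{2a}\mathcal{A}$, the inductive construction of $\mu$ with values in $\m^a\mathcal{A}$, and the telescoping verification that $c(h_1,\dots,h_n)$ is a cycle, constitute precisely the proof given in \cite{RS}.
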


It is convenient to name the following  family of maps of $\Tor$.
\begin{definition}\cite[1.3.1]{RS} \label{y(1.3.1)} If $M$ is a module over the local ring $(R,\mathfrak m,\kk)$, then let
$\nu^R_i(M)$ represent the $R$-module homomorphism
$$\nu^R_i(M):\Tor_i^R(\mathfrak m M,\kk)\to \Tor_i^R(M,\kk),$$which is induced by the inclusion $\mathfrak mM\subseteq M$.
\end{definition}

We 
use  Lemma~\ref{Liana!}  to calculate $\nu_i$.
Associated graded objects are discussed in \ref{agr}.

\begin{lemma}\label{Liana!}
Let $(Q,\n,\kk)$ be a regular local ring,   $(R,\m,\kk)$ be the local ring $R=Q/I$ for some ideal $I$ of $Q$, and  $i$ and $\ell$  be two integers. 
If $\Tor_{i,j}^{\agr Q}(\agr R,\kk)=0$ for all $j$ with $\ell +1+i\le j$, then the map $$\nu_i^Q(\m^\ell)\colon \Tor_i^Q(\m^{\ell+1},\kk)\to \Tor_i^Q(\m^{\ell},\kk)$$ is identically zero. 
\end{lemma}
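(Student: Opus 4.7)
The plan is to compute $\Tor_i^Q(-,\kk)$ via the Koszul complex $K := K^Q$ on a minimal generating set $x_1,\dots,x_e$ of $\n$, and then to extract information by passing to initial forms with respect to the $\n$-adic filtration. The associated graded of $K$ under this filtration is the Koszul complex $K^{\agr Q}$ of $\agr Q$ on the residues $\bar x_1,\dots,\bar x_e \in (\agr Q)_1$, which minimally resolves $\kk$ over $\agr Q$. In the bigraded convention of \ref{bi-g}, $\Tor_{i,j}^{\agr Q}(\agr R,\kk)$ is the homology of $\agr R \otimes_{\agr Q} K^{\agr Q}$ in homological degree $i$ and internal degree $j$; a class represented by a cycle in $(\agr R)_o \otimes K_i^{\agr Q}$ has internal degree $j = o+i$ because $K_i^{\agr Q}$ is generated in degree $i$.

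Let $\zeta \in \m^{\ell+1} \otimes_Q K_i$ be a cycle representing an arbitrary element of $\Tor_i^Q(\m^{\ell+1},\kk)$, and let $o \ge \ell+1$ be the largest integer such that every coefficient of $\zeta$ in the standard basis of $K_i$ lies in $\m^o$. Because $\partial \zeta = 0$ in $\m^\ell \otimes K_{i-1}$, the initial form $\mathrm{in}(\zeta) \in (\agr R)_o \otimes K_i^{\agr Q}$ is a cycle in $\agr R \otimes_{\agr Q} K^{\agr Q}$ and represents a class in $\Tor_{i,\,o+i}^{\agr Q}(\agr R,\kk)$. Since $o+i \ge \ell+1+i$, the hypothesis forces this class to vanish, so $\mathrm{in}(\zeta) = \partial[\eta]$ for some $[\eta] \in (\agr R)_{o-1} \otimes K_{i+1}^{\agr Q}$. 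Lifting $[\eta]$ to any $\eta \in \m^{o-1} \otimes K_{i+1}$, which already sits in $\m^\ell \otimes K_{i+1}$ because $o-1 \ge \ell$, produces a cycle $\zeta - \partial\eta$ in $\m^{\ell+1} \otimes K_i$ whose $\m$-adic order is strictly greater than $o$ (the two terms share the same initial form in degree $o$).

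Iterating this order-raising step yields elements $\eta_0,\eta_1,\dots \in \m^\ell \otimes K_{i+1}$ such that $\zeta - \partial(\eta_0 + \cdots + \eta_{k-1})$ has $\m$-adic order at least $o+k$. In the Artinian context in which the lemma is applied, $\m^N = 0$ for some $N$, so the iteration terminates after finitely many steps and exhibits $\zeta$ as an honest boundary in $\m^\ell \otimes K_{i+1}$; in full generality the same conclusion follows from $\m$-adic separation together with a completion argument. Hence the class of $\zeta$ vanishes under $\nu_i^Q(\m^\ell)$. I expect the main technical points to be (i) justifying that the associated graded of $K$ with respect to the $\n$-adic filtration genuinely is $K^{\agr Q}$, so that the passage from cycles to initial forms is legitimate and the identity $\mathrm{in}(\partial\eta) = \partial\,\mathrm{in}(\eta)$ holds at the relevant degrees, and (ii) controlling termination; both are standard, with (i) purely formal from the definition of the Koszul complex and (ii) immediate in the paper's Artinian setting.
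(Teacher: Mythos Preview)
Your proof is correct and follows essentially the same approach as the paper: both compute $\Tor$ via the Koszul complex, pass to initial forms in the associated graded (which is the Koszul complex over $\agr R$), use the vanishing hypothesis to write the initial form as a boundary, lift to raise the $\m$-adic order, and iterate. The only minor difference is in termination for the non-Artinian case---the paper invokes the Krull intersection theorem directly (showing $Z \subseteq \bigcap_j (B + \m^j K^R_i) = B$), whereas you sketch a completion argument; both are valid.
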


\begin{proof}
Let $K^R$ denote the Koszul complex over $R$ on a minimal generating set $x_1, \dots,\allowbreak x_e$ of $\m$. We  identify $\nu_i^Q(\m^\ell)$ with the map $\HH_i(\m^{\ell+1}K^R)\to \HH_i(\m^\ell K^R)$ induced by the inclusion $$\m^{\ell+1}K^R\subseteq \m^\ell K^R.$$ Let $Z$ denote the module of cycles in degree $i$ of $\m^{\ell+1}K^R$ and  $B$ denote the module of boundaries of degree $i$ in $\m^{\ell}K^R$. Note that $B\subseteq Z$. To show that $\nu_i^Q(\m^\ell)$ is zero, we need to show that $Z\subseteq B$. 
We will show that $Z\subseteq B+\m^jK^R_i$ for all $j$ with $\ell+2\le j$.

For each $j$, let $x_j^*$ denote the image the element $x_j$ in $\m/\m^2=(\agr R)_1$. Let $L$ denote the graded Koszul complex over $\agr R$ on $x_1^*, \dots, x_e^*$. When writing  $L_{p,q}$, the  index $p$ stands for the homological degree and the index $q$ for the internal degree. 
Note that  $L$ can be thought of as the associated graded complex of $K^R$, with respect to the standard $\m$-adic filtration of $K^R$.  In particular, $L_p=(\agr{(K^R_p)})(-p)$ for each $p$,  and the differential $d_L$ of $L$ is induced from the differential $d_{K^R}$ of $K^R$ as follows: If $y\in \m^qK^R_p$ and $y^*$ is the image of $y$ in $\m^qK^R_p/\m^{q+1}K^R_{p}=L_{p,p+q}$, then $d_L(y^*)$ is equal to the image of $d_{K^R}(y)$ in $\m^{q+1}K^R_{p-1}/\m^{q+2}K^R_{p-1}=L_{p-1, p+q}$. 
We  identify $\Tor^{\agr Q}(\agr R,\kk)$ with the homology of the complex $L$.

Fix an integer $p$ with $\ell+1\le p$ and let $z\in Z\cap \m^{p}K^R_i$. In particular,  $d_{K^R}(z)=0$.  We consider $z^*$ to be the image of $z$ in $\m^{p}K^R_i/\m^{p+1}K^R_{i}=L_{i,p+i}$ and note that $d_L(z^*)=0$ because $d_{K^R}(z)=0$. The hypothesis that $\Tor_{i, p+i}^{\agr Q}(\agr R,\kk)=0$ implies that $z^*=d_L(y^*)$ where $y^*\in  \m^{p-1}K^R_{i+1}/\m^{p}K^R_{i+1}=L_{i+1,p+i}$ is the image of an element $y\in \m^{p-1} K^R_{i+1}$. It follows that $z-d_{K^R}(y)\in \m^{p+1}K^R_i$, and we conclude that $z\in  B+\m^{p+1}K^R_i$. It follows that 
\begin{equation}
\label{inclusion-2}
Z\cap \m^{p}K^R_i\subseteq Z\cap (B+\m^{p+1}K^R_i)=B+Z\cap \m^{p+1}K^R_i.
\end{equation}
 Since $Z=Z\cap \m^{\ell+1}K^R_i$, we conclude inductively, using (\ref{inclusion-2}), that  $Z\subseteq B+\m^jK^R_i$ for all $j$ with $\ell+2\le j$, hence $Z\subseteq B$ by the Krull intersection Theorem. 
\end{proof}

Lemma~\ref{4.2.a} is a straightforward consequence of Lemma~\ref{Liana!}. This result is used in the proof of Observation~\ref{xxx.b}; furthermore, a consequence of Lemma~\ref{4.2.a} is restated as \ref{repeat}.

\begin{lemma}\label{4.2.a}
Let $(Q,\n,\kk)$ be a regular local ring and   $(R,\m,\kk)$ be the local ring $R=Q/I$ for some ideal $I$ of $Q$. 
Then the maps
\begin{equation}\label{zero}\Tor_i^Q(R/\mathfrak m^{\ell+1},\kk)\to \Tor_i^Q(R/\mathfrak m^{\ell},\kk)\end{equation} 
and 
 \begin{equation}\label{3.5.1}\Tor_i^Q(R,\kk) \to \Tor_i^Q(R/\m^\ell,\kk),\end{equation}
 are each  the zero map for all $(i,\ell)$ with $1\le i$ and $1\le \ell\le {\tt v}(R)-1$. The map of {\rm(\ref{zero})} is induced by the natural quotient map $R/\mathfrak m^{\ell+1}\to R/\mathfrak m^{\ell}$ and the map of {\rm(\ref{3.5.1})} is induced by the natural quotient map $R\to R/\m^\ell$.
\end{lemma}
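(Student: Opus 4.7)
The plan is to reduce the statement to the corresponding assertion for the regular local ring $Q$ itself---so that Lemma~\ref{Liana!} applies vacuously---and then to translate the resulting vanishing into vanishing of the maps in the statement via long exact $\Tor^Q(-,\kk)$-sequences.

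First I will observe that, under the standing convention $I\subseteq \n^2$ (equivalently, $e_Q=e_R$), the definition of ${\tt v}(R)$ forces $I\subseteq \n^{{\tt v}(R)}$. Indeed, the equality $\dim_{\kk}(\m^j/\m^{j+1})=\binom{e-1+j}{j}$ for $j<{\tt v}(R)$ is equivalent to $\n^j\cap I\subseteq \n^{j+1}$, and iterating on the $\n$-adic order of any element of $I$ pushes it into $\n^{{\tt v}(R)}$. Hence for every $\ell\le {\tt v}(R)$,
$$R/\m^\ell=Q/(I+\n^\ell)=Q/\n^\ell,$$
and under the hypothesis $\ell\le {\tt v}(R)-1$ the canonical map $R/\m^{\ell+1}\to R/\m^\ell$ is literally the quotient $Q/\n^{\ell+1}\to Q/\n^\ell$. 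It therefore suffices to show: for a regular local ring $Q$, the map $\Tor_i^Q(Q/\n^{\ell+1},\kk)\to \Tor_i^Q(Q/\n^\ell,\kk)$ is zero for all $i\ge 1$ and $\ell\ge 1$.

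I now apply Lemma~\ref{Liana!} with its ``$R$'' set equal to $Q$. Since $\agr Q$ is a polynomial ring, $\Tor^{\agr Q}_{i,j}(\agr Q,\kk)$ vanishes except at $(i,j)=(0,0)$, so the hypothesis of Liana's lemma holds trivially for every $i$ and $\ell$, yielding $\nu_i^Q(\n^\ell)=0$ for all $i\ge 0$ and $\ell\ge 0$. Comparing the long exact $\Tor^Q(-,\kk)$-sequences of $0\to\n^m\to Q\to Q/\n^m\to 0$ for $m=\ell$ and $m=\ell+1$, and using that $\Tor_i^Q(Q,\kk)=0$ for $i\ge 1$, the connecting maps yield a natural isomorphism $\Tor_i^Q(Q/\n^m,\kk)\cong \Tor_{i-1}^Q(\n^m,\kk)$ for $i\ge 2$, while $\Tor_1^Q(Q/\n^m,\kk)$ is identified with $\n^m/\n^{m+1}\subseteq \Tor_0^Q(\n^m,\kk)$. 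By naturality, the map of interest corresponds under these identifications to (a restriction of) $\nu_{i-1}^Q(\n^\ell)$, which vanishes by the previous step. This proves the first half of the lemma; the second half follows immediately, since $R\to R/\m^\ell$ factors as $R\to R/\m^{\ell+1}\to R/\m^\ell$ and the second arrow has just been shown to induce zero on $\Tor_i^Q$.

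The main subtlety I expect is the initial identification $R/\m^\ell=Q/\n^\ell$, which depends on the standing convention $e_Q=e_R$ implicit in the application of Lemma~\ref{Liana!} throughout the paper; after that identification, the rest of the proof is a routine diagram chase combined with the trivial case of Liana's lemma when $R=Q$.
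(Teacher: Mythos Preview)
Your proof is correct and follows essentially the same route as the paper's: apply Lemma~\ref{Liana!} with its ``$R$'' taken to be $Q$ (so the $\Tor^{\agr Q}$-hypothesis is vacuous) to obtain $\nu_i^Q(\n^\ell)=0$, convert this via the long exact sequences of $0\to\n^m\to Q\to Q/\n^m\to 0$ into vanishing of $\Tor_i^Q(Q/\n^{\ell+1},\kk)\to\Tor_i^Q(Q/\n^\ell,\kk)$, and identify $Q/\n^{\ell}=R/\m^{\ell}$ via $I\subseteq\n^{{\tt v}(R)}$. You are in fact more explicit than the paper in justifying the containment $I\subseteq\n^{{\tt v}(R)}$ from the definition of ${\tt v}(R)$ and the standing convention $I\subseteq\n^2$; the paper simply asserts it.
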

\begin{proof}
 It is clear that 
$\Tor_{i,j}^{\agr{Q}}(\agr{Q},\kk)=0$ for all $(i,j)\neq (0,0)$. 
The parameter $\ell$ is non-negative; so Lemma~\ref{Liana!} 
yields that 
$$\nu_i^Q(\mathfrak n^{\ell}):\Tor_i^Q(\mathfrak n^{\ell+1},\kk)\to \Tor_i^Q(\mathfrak n^{\ell},\kk)$$ is the zero map for all non-negative $i$. The long exact sequences of $\Tor$ which correspond to the commutative diagram$$\xymatrix{
0\ar[r]&\mathfrak n^{\ell+1}\ar[r]\ar[d]&Q\ar[r]\ar[d]&Q/\mathfrak n^{\ell+1}\ar[r]\ar[d]&0\\
0\ar[r]&\mathfrak n^{\ell}\ar[r]&Q\ar[r]&Q/\mathfrak n^{\ell}\ar[r]&0}$$ yields that 
$$\Tor_i^Q(Q/\mathfrak n^{\ell+1},\kk)\to \Tor_i^Q(Q/\mathfrak n^{\ell},\kk)$$ is the zero map for all positive $i$. The ideal $I$ is contained in $\mathfrak n^{\ell+1}$ and $\mathfrak n^{\ell}$; so $Q/\mathfrak n^{\ell+1}=R/\mathfrak m^{\ell+1}$ and $Q/\mathfrak n^{\ell}=R/\mathfrak m^{\ell}$. Thus, (\ref{zero}) is the zero map for all positive $i$. The map of (\ref{3.5.1}) factors through (\ref{zero}).
\end{proof}

Observation~\ref{xxx.b} takes care of the ``easy case'' in the proof of the main theorem, which is Theorem~\ref{5.1}.
\begin{observation}\label{xxx.b}
Let $(R,\m, \kk)$ be a  
 local Artinian ring
with  
top socle degree $s$.
 If $s\le 2{\tt v}(R)-3$, then $R$ is a Golod ring.
\end{observation}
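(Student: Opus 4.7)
The plan is to apply Lemma~\ref{RS1.2} to a surjection $\kappa\colon Q\to R$ from a regular local ring $(Q,\n,\kk)$ of embedding dimension $e$ (obtained by completing $R$ and invoking Cohen structure; the Hilbert function, the invariant ${\tt v}$, and the Golod property are all preserved by completion). Once $\kappa$ is shown to be a Golod homomorphism, $R$ is Golod in the sense of \ref{2.14}, by the standard characterization: a surjection from a regular local ring onto $R$ inducing an isomorphism on cotangent spaces is Golod precisely when the Poincar\'e series $P^R_\kk(z)$ has the form displayed in \ref{2.14}.

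The key choice is to take the integer $a$ of Lemma~\ref{RS1.2} to be $a=\lceil (s+1)/2\rceil$. Since $2a\ge s+1$ we have $\m^{2a}=0$; hence $\Tor_i^Q(\m^{2a},\kk)=0$ for every $i\ge 0$, and condition~(\ref{C4.2.b}) of Lemma~\ref{RS1.2} is vacuously satisfied.

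For condition~(\ref{C4.2.a}), I would invoke Lemma~\ref{4.2.a}, which asserts that the map $\Tor_i^Q(R,\kk)\to \Tor_i^Q(R/\m^\ell,\kk)$ is the zero map for every positive $i$ and every $\ell$ with $1\le \ell\le {\tt v}(R)-1$. It therefore suffices to verify that $1\le a\le {\tt v}(R)-1$. The hypothesis $s\le 2{\tt v}(R)-3$ rewrites as $2{\tt v}(R)\ge s+3$; a short parity check, treating $s$ odd and $s$ even separately, then shows $\lceil (s+1)/2\rceil \le {\tt v}(R)-1$ in both cases.

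There is no substantial obstacle here: the entire argument is a careful choice of $a$ so that condition~(\ref{C4.2.b}) becomes vacuous (via $\m^{2a}=0$) while condition~(\ref{C4.2.a}) still falls within the range of $\ell$ supplied by Lemma~\ref{4.2.a}. The inequality $s\le 2{\tt v}(R)-3$ is exactly what makes these two requirements simultaneously realizable, which is presumably why the authors describe this as the ``easy case'' of the main theorem.
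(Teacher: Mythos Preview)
Your proposal is correct and follows essentially the same approach as the paper: apply Lemma~\ref{RS1.2} to a Cohen presentation $Q\to R$, choosing $a$ so that $\m^{2a}=0$ makes condition~(\ref{C4.2.b}) vacuous while $a\le {\tt v}(R)-1$ puts condition~(\ref{C4.2.a}) in the range of Lemma~\ref{4.2.a}. The only difference is cosmetic: the paper takes $a={\tt v}(R)-1$ directly, which avoids your parity case-split, whereas you take $a=\lceil (s+1)/2\rceil$; both choices lie in the interval $[\lceil (s+1)/2\rceil,\,{\tt v}(R)-1]$ that the hypothesis $s\le 2{\tt v}(R)-3$ guarantees is nonempty.
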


\begin{proof} Let  
$t$ denote ${\tt v}(R)$. 
The ring $R$ is complete and local; so the Cohen structure theorem guarantees that there is a regular local ring $(Q,\n,\kk)$ with   $R=Q/I$ and $I\subseteq \n^2$. 
We apply Lemma~\ref{RS1.2}, with $a=t-1$, to show that the canonical quotient map $Q\to Q/I=R$ is a Golod homomorphism. 
It follows that $R$ is a Golod ring.   
It suffices to show that 
\begin{enumerate}[\rm(i)]
\item\label{2.2.i} the map $$\Tor^Q_i(R,\kk)\to\Tor^Q_i(R/\mathfrak m^{t-1},\kk),$$ induced by the  quotient map ${R\to R/\mathfrak m^{t-1}}$, is zero for all positive $i$, and 
\item\label{2.2.ii} the map $$\Tor^Q_i(\mathfrak m^{2t-2},\kk)\to\Tor^Q_i(\mathfrak m^{t-1},\kk),$$ induced by the inclusion $\mathfrak m^{2t-2}\to\mathfrak m^{t-1}$ is zero for all non-negative $i$.
 \end{enumerate}

\noindent Condition (\ref{2.2.i}) is established in Lemma~\ref{4.2.a} and
(\ref{2.2.ii}) obviously holds. Indeed, by hypothesis, the top socle degree $s$ of $R$  satisfies $s\le 2t-3$. It  follows that  $\mathfrak m^{2t-2}=0$.
\end{proof}

The following two results are proven in \cite{RS}; but in each case the statement given in \cite{RS} is slightly different than the statement given here.
\begin{setup}\label{setup} Let $(Q,\n,\kk)$ and $(P,\pp,\kk)$ be local rings with $P=Q/(h)$ for some 
element $h$ in  $\n^t$ with $h$ not a zerodivisor on $Q$ and $2\le t$. Let  $N\subseteq M$ be finitely generated $P$-modules, 
 $\incl:N\to M$ represent the inclusion map, and $\varphi:Q\to P$ represent the natural quotient map. For any $P$-module $X$, let $\varphi^X_i:\Tor_i^Q(X,\kk)\to 
\Tor_i^P(X,\kk)$ be the map on $\Tor$ induced by the change of rings $\varphi:Q\to P$. For either ring $A=P$ or $A=Q$, let $\incl_i^A:\Tor_i^A(N,\kk)\to \Tor_i^A(M,\kk)$ be the map on $\Tor$ induced by the $A$-module homomorphism $\incl:N\to M$. \end{setup}

\begin{lemma}\label{6.19}{\rm\cite[Lem.~2.4]{RS}} Adopt the notation of {\rm \ref{setup}}. If 
$\n^{t-1}(M/N)$ is zero, 
then 
$$\ker\left(\varphi_i^M:\Tor_i^Q(M,\kk)\to\Tor_i^P(M,\kk)\right)\subseteq \im\left(\incl_i^Q:\Tor_i^Q(N,\kk)\to\Tor_i^Q(M,\kk)\right)$$for all $i$.\end{lemma}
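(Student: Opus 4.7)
The plan is to reduce, via the short exact sequence $0\to N\to M\to M/N\to 0$ of $P$-modules, to the statement that the change-of-rings map $\varphi_i^{M/N}$ is injective; then to prove this injectivity by using the hypothesis $\n^{t-1}(M/N)=0$ to build a minimal $P$-free resolution of $M/N$ from a minimal $Q$-free resolution via a Shamash-type construction.

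For the reduction, applying $\Tor^Q(-,\kk)$ and $\Tor^P(-,\kk)$ to the short exact sequence produces a commutative ladder of long exact sequences whose vertical arrows are the change-of-rings maps $\varphi^N_i,\varphi^M_i,\varphi^{M/N}_i$. Exactness of the top row identifies $\im(\incl^Q_i)$ with the kernel of the natural map $p^Q_i\colon\Tor^Q_i(M,\kk)\to\Tor^Q_i(M/N,\kk)$. For $\alpha\in\ker(\varphi^M_i)$, commutativity forces $\varphi^{M/N}_i(p^Q_i(\alpha))=0$, so once $\varphi^{M/N}_i$ is shown to be injective, $\alpha\in\ker(p^Q_i)=\im(\incl^Q_i)$, as desired.

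For the injectivity, I will prove the more general claim: \emph{if $X$ is a $P$-module with $\n^{t-1}X=0$, then $\varphi^X_i$ is injective for every $i$.} The hypothesis gives $\n^{t-1}\subseteq \ann_Q(X)$, so $h\in\n^t=\n\cdot\n^{t-1}\subseteq\n\cdot\ann_Q(X)$, and I may write $h=\sum_j n_j a_j$ with $n_j\in\n$ and $a_j\in\ann_Q(X)$. Let $F\to X$ be a minimal $Q$-free resolution. Since $a_j X=0$, multiplication by each $a_j$ on $F$ admits a $Q$-linear null-homotopy $s^{(j)}$ of degree $+1$; setting $s:=\sum_j n_j s^{(j)}$ produces a null-homotopy of multiplication by $h$ with the crucial property that $s(F)\subseteq \n F$. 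Feeding the minimal pair $(F,s)$ into the Shamash construction yields a minimal $P$-free resolution $G=F\t_Q P\langle y\rangle$ of $X$, where $y$ is a divided-power variable of degree $2$, so that $\Tor^P_i(X,\kk)=\bigoplus_{p+2q=i}\Tor^Q_p(X,\kk)$ and $\varphi^X_i$ is identified with the inclusion of the $q=0$ summand, which is visibly injective.

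The main obstacle is justifying that the Shamash construction, applied to the single regular element $h$, actually produces a minimal $P$-resolution when started from this minimal $s$: one needs the higher-order Shamash corrections $s_2,s_3,\ldots$ that enforce $d_G^2=0$ on the divided-power algebra to also land in $\n F$. This can be arranged inductively using minimality of $F$, since the leading obstruction $s^2$ already lies in $\n^2 F$. Should this step prove awkward, I can sidestep the explicit Shamash formalism by invoking the Cartan--Eilenberg change-of-rings long exact sequence and showing that its connecting morphism $\Tor^P_{i+1}(X,\kk)\to\Tor^P_{i-1}(X,\kk)$, which records multiplication by the class of $h$, vanishes whenever $h\in\n\cdot\ann_Q(X)$.
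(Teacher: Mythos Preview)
Your reduction via the short exact sequence $0\to N\to M\to M/N\to 0$ is exactly what the paper does: the Remark following Lemma~\ref{6.20} says to start from this sequence and use that $\varphi^{M/N}_i$ is injective whenever $\n^{t-1}(M/N)=0$, pointing to \cite{RS} for that fact.

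For the injectivity of $\varphi^{M/N}_i$, however, the argument in \cite{RS} (and the one used implicitly throughout this paper) is simpler than your Shamash-on-$F$ route and avoids the obstacle you flag. Instead of resolving $X=M/N$ over $Q$, resolve $\kk$ over $P$ by the Tate complex $(P\otimes_Q K)\langle Y\rangle$, where $K$ is the Koszul complex on a minimal generating set of $\n$ and $\partial Y=g$ for a one-cycle $g$ lying in $\n^{t-1}(P\otimes_Q K_1)$ (possible because $h\in\n^t$). Then
\[
\Tor^P_i(X,\kk)=\HH_i\big((X\otimes_Q K)\langle Y\rangle\big),
\]
and the hypothesis $\n^{t-1}X=0$ forces the image $\bar g$ of $g$ in $X\otimes_Q K$ to vanish. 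The differential on $(X\otimes_Q K)\langle Y\rangle$ therefore degenerates to $\partial_{X\otimes K}\otimes 1$, so
\[
\Tor^P_i(X,\kk)\cong\bigoplus_{q\ge 0}\Tor^Q_{i-2q}(X,\kk)
\]
and $\varphi^X_i$ is the inclusion of the $q=0$ summand, visibly injective. No higher homotopies enter.

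Your approach would give the same conclusion if it went through, but the step you single out as the ``main obstacle'' is a genuine gap in your sketch. Knowing $\sigma_1^2\in\n^2\Hom(F,F)$ does not by itself yield a null-homotopy $\sigma_2$ landing in $\n\Hom(F,F)$: with $F$ minimal, every positive-degree cycle of $\Hom(F,F)$ is a boundary and hence already lies in $\n\Hom(F,F)$, so two null-homotopies of $\sigma_1^2$ have the same image modulo $\n$ and there is no cycle available to adjust that image to zero. Your fallback via the change-of-rings long exact sequence is viable in principle, but its connecting map is itself built from the $\sigma_j$, so the same issue reappears. The Tate-complex argument above is the clean way to close the gap, and it is what the paper and \cite{RS} actually use.
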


\begin{lemma}\label{6.20}{\rm\cite[Lem.~2.3]{RS}} Adopt the notation of {\rm\ref{setup}}.
If the modules   $\n^{t-1}N$ and $\n^{t-1}(M/N)$  are both zero, 
then the following statements are equivalent{\rm:}
\begin{enumerate}[\rm(a)]
\item the map $\Tor_i^P(N,\kk)\xrightarrow{\incl_i^P}\Tor_i^P(M,\kk)$ is identically zero for all $i$, and 
\item the composition 
  $\Tor_i^Q(N,\kk)\xrightarrow{\incl_i^Q}\Tor_i^Q(M,\kk)\xrightarrow{\varphi_i^M}\Tor_i^P(M,\kk)$  is identically zero for all $i$.
\end{enumerate}
\end{lemma}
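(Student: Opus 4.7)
The plan is to prove both directions using Lemma~\ref{6.19} and the standard change of rings long exact sequence associated to $P = Q/(h)$.

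Direction (a) $\Rightarrow$ (b) is pure naturality. The change of rings map is functorial, so the square with top row $\incl_i^Q$, bottom row $\incl_i^P$, and vertical arrows $\varphi_i^N,\varphi_i^M$ commutes, yielding $\varphi_i^M\circ\incl_i^Q = \incl_i^P\circ\varphi_i^N$; the right-hand side vanishes once $\incl_i^P = 0$.

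For (b) $\Rightarrow$ (a) I first invoke Lemma~\ref{6.19} three times. Applied to the pair $(0,N)$ with $\n^{t-1}N = 0$ it forces $\varphi_i^N$ injective; applied to $(0,M/N)$ with $\n^{t-1}(M/N) = 0$ it forces $\varphi_i^{M/N}$ injective; applied to $(N,M)$ with $\n^{t-1}(M/N) = 0$ it gives $\ker\varphi_i^M\subseteq\im\incl_i^Q$, an inclusion that combined with (b) upgrades to the equality $\ker\varphi_i^M = \im\incl_i^Q$. The change of rings long exact sequence for $P = Q/(h)$ furnishes a natural Eisenbud operator $\chi\colon \Tor_i^P(X,\kk)\to\Tor_{i-2}^P(X,\kk)$ satisfying $\ker\chi = \im\varphi_i^X$. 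I then prove $\incl_i^P = 0$ by induction on $i$. For the inductive step, given $\alpha\in\Tor_i^P(N,\kk)$, naturality of $\chi$ and the inductive hypothesis $\incl_{i-2}^P = 0$ give $\chi(\incl_i^P(\alpha)) = \incl_{i-2}^P(\chi(\alpha)) = 0$, so $\incl_i^P(\alpha) = \varphi_i^M(x)$ for some $x\in\Tor_i^Q(M,\kk)$. Applying $\pi_i^P\colon \Tor_i^P(M,\kk)\to\Tor_i^P(M/N,\kk)$: the relation $\pi_i^P\circ\incl_i^P = 0$ combined with the naturality identity $\pi_i^P\circ\varphi_i^M = \varphi_i^{M/N}\circ\pi_i^Q$ and the injectivity of $\varphi_i^{M/N}$ forces $\pi_i^Q(x) = 0$, so $x = \incl_i^Q(y)$ by the $Q$-long exact sequence of $0\to N\to M\to M/N\to 0$, and finally $\incl_i^P(\alpha) = \varphi_i^M(\incl_i^Q(y)) = 0$ by (b). The base cases $i = 0,1$ are immediate: $\varphi_0$ is an isomorphism so (a) and (b) coincide at $i = 0$, and at $i = 1$ the target $\Tor_{-1}^P$ of $\chi$ vanishes, forcing $\varphi_1^M$ surjective so the same chase applies.

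The main obstacle is invoking the change of rings long exact sequence with the required naturality in the $P$-module variable; this is standard for $P = Q/(h)$ with $h$ a non-zerodivisor in $\n^2$, via the Cartan--Eilenberg spectral sequence for the composition $Q\text{-Mod}\to P\text{-Mod}\to\kk\text{-Mod}$, but it requires a little bookkeeping to invoke cleanly at low degrees.
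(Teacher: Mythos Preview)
Your proof is correct and follows essentially the same approach as the paper, which defers to \cite{RS}: use the short exact sequence $0\to N\to M\to M/N\to 0$, the injectivity of $\varphi_i^{M/N}$ (which you derive from Lemma~\ref{6.19} applied to $(0,M/N)$), and the change of rings long exact sequence for the hypersurface $P=Q/(h)$. One minor comment: you state the injectivity of $\varphi_i^N$ and the equality $\ker\varphi_i^M=\im\incl_i^Q$, but your inductive chase never actually uses either---you only need $\ker\chi=\im\varphi_i^M$, exactness of the $Q$-long exact sequence, injectivity of $\varphi_i^{M/N}$, and hypothesis~(b).
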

\begin{Remark} To prove these results, in each case start with the short exact sequence
$$0\to N\to M\to M/N\to 0$$ and follow the argument given in \cite{RS}. Keep in mind that the hypothesis that $\n^{t-1}(M/N)$ is zero ensures that 
$$\varphi_i^{M/N}:\Tor_i^Q(M/N,\kk)\xrightarrow{}\Tor_i^P(M/N,\kk)$$ 
is injective for all $i$, see \cite[line 3 on page 427]{RS}.
 In particular, the conclusion we have drawn in Lemma~\ref{6.19} does not require  
$\incl_i^Q:\Tor_i^Q(N,\kk)\to\Tor_i^Q(M,\kk)$ to be the zero map. 

It is worth noting that the change of rings involved in constructing 
$$\varphi_i^M:\Tor_i^Q(M,\kk)\to\Tor_i^P(M,\kk)$$
 is fairly subtle; see \cite[Thm.~3.1.3]{A98} for details. The original construction was due to Shamash \cite{S69}; this construction planted a seed that evolved into the Eisenbud operators.
\end{Remark}

We conclude this section with a  result which exhibits the common denominator for all Poincar\'e series $P^R_M(z)$ when there is a Golod homomorphism from a local hypersurface ring onto $R$ and $M$ roams over all finitely generated $R$-modules.
\begin{theorem}\label{Levin} Let $(Q,\mathfrak n,\kk)$ be a regular local ring
of embedding dimension $e$, $(P,\pp,\kk)$ be a local ring with $P=Q/(h)$ for some $h\in \n^2$, $(R,\m,\kk)$ be a local ring,  $\kappa:P\to R$ be a surjective Golod homomorphism, $\varphi_\bullet^R:\Tor^Q(R,\kk)\to \Tor^P_\bullet(R,\kk)$ be the map induced by the natural quotient map $Q\to P$, and  $d_R(z)$ be the polynomial
$$d_R(z)=1- z(P^Q_R(z)- 1)+(z+z^2)\cdot \big(\HS_{\ker \varphi^R_\bullet}(z)-z\big)\in \mathbb Z[z].$$Then,
for every finitely generated $R$-module $M$, there exists a polynomial $p_M(z)$ in $\mathbb Z[z]$ with
$$P^R_
M(z)d_R(z) = p_M(z).$$
In particular, $p_{\kk}(z)=(1+z)^e$.
 \end{theorem}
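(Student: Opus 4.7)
The plan rests on combining Levin's formula for Golod homomorphisms with the change-of-rings exact sequence associated to the hypersurface $P=Q/(h)$. Since $\kappa\colon P\to R$ is a surjective Golod homomorphism, Levin's theorem supplies
$$P^R_M(z)\bigl(1-z(P^P_R(z)-1)\bigr)=P^P_M(z)$$
for every finitely generated $R$-module $M$. The task is then to rewrite both the ``Golod factor'' $1-z(P^P_R(z)-1)$ and the right-hand side in terms of invariants over the regular ring $Q$.

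The passage from $P$ back to $Q$ goes through the Cartan--Eilenberg change-of-rings spectral sequence
$$E^2_{p,q}=\Tor^P_p(N,\Tor^Q_q(P,\kk))\Rightarrow \Tor^Q_{p+q}(N,\kk)$$
for a finitely generated $P$-module $N$. Because $h$ is a non-zerodivisor lying in $\n^2$, the module $P$ has the $Q$-free resolution $0\to Q\xrightarrow{h}Q\to P\to 0$, so $\Tor^Q_q(P,\kk)=\kk$ for $q\in\{0,1\}$ and vanishes otherwise. The spectral sequence thus collapses to two rows with a single non-trivial differential, the Eisenbud operator $\chi\colon \Tor^P_p(N,\kk)\to \Tor^P_{p-2}(N,\kk)$, and unrolling it produces the long exact sequence
$$\cdots\to \Tor^P_{n+1}(N,\kk)\xrightarrow{\chi}\Tor^P_{n-1}(N,\kk)\to \Tor^Q_n(N,\kk)\xrightarrow{\varphi^N_n}\Tor^P_n(N,\kk)\xrightarrow{\chi}\Tor^P_{n-2}(N,\kk)\to\cdots.$$
A routine telescoping dimension count in this sequence yields the Poincar\'e series identity
$$(1-z^2)P^P_N(z)=P^Q_N(z)-(1+z)\HS_{\ker\varphi^N_\bullet}(z).$$

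Specializing to $N=R$ and expanding, one checks directly that $(1-z^2)\bigl(1-z(P^P_R(z)-1)\bigr)=d_R(z)$. Multiplying Levin's formula by $1-z^2$ and substituting the $N=M$ case of the identity from the previous paragraph then gives
$$P^R_M(z)\,d_R(z)=P^Q_M(z)-(1+z)\HS_{\ker\varphi^M_\bullet}(z),$$
and the right-hand side is a polynomial $p_M(z)\in\mathbb Z[z]$ because $Q$ is regular of embedding dimension $e$, forcing $\Tor^Q_i(M,\kk)=0$ for $i>e$. For $M=\kk$, the well-known values $P^Q_\kk(z)=(1+z)^e$ and $P^P_\kk(z)=(1+z)^e/(1-z^2)$ combined with the identity $(1-z^2)P^P_\kk(z)=P^Q_\kk(z)-(1+z)\HS_{\ker\varphi^\kk_\bullet}(z)$ force $\HS_{\ker\varphi^\kk_\bullet}(z)=0$, so that $p_\kk(z)=(1+z)^e$. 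The main obstacle I anticipate is the careful justification of the change-of-rings long exact sequence and the verification that the map $\varphi^N_\bullet$ produced by the spectral sequence matches the change-of-rings map named in the statement; a cleaner but less conceptual alternative is to derive the long exact sequence by lifting a minimal $P$-free resolution of $N$ to a $Q$-free resolution via the Shamash construction.
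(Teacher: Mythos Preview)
Your proposal is correct and follows essentially the same route as the paper: both combine the Golod-homomorphism Poincar\'e formula $P^R_M(z)\bigl(1-z(P^P_R(z)-1)\bigr)=P^P_M(z)$ with the hypersurface change-of-rings identity $(1-z^2)P^P_N(z)=P^Q_N(z)-(1+z)\HS_{\ker\varphi^N_\bullet}(z)$, then check algebraically that $(1-z^2)\bigl(1-z(P^P_R(z)-1)\bigr)=d_R(z)$. The paper cites Levin for the module formula and cites \cite[2.2.1]{RS} for the hypersurface identity, whereas you sketch the latter via the change-of-rings spectral sequence and also obtain the explicit expression $p_M(z)=P^Q_M(z)-(1+z)\HS_{\ker\varphi^M_\bullet}(z)$, which the paper does not record; otherwise the arguments are the same.
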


\begin{proof} Results of Levin, see for example \cite[Prop.~5.18]{AKM}, give all of the  conclusions, except for the formula for $d_R(z)$. The denominator $d_R(z)$ is calculated in \cite{RS}; although the exact form given above is not explicitly identified there. Most of the steps are well known. One starts with the equation
$$d_R(z) = \frac{(1+z)^e}{P^R_{\kk}(z)};$$ so it suffices to calculate  $P^R_{\kk(z)}$. The homomorphism  $\kappa$ is Golod; hence the equation 
$$P_{\kk}^R(z) = \frac{P^P_{\kk}(z)}
{1 - z(P^P_
R(z) - 1)}$$holds;  see \cite[Prop.~1]{G72}. The key new step is taken in  \cite[2.2.1]{RS} where it is shown that 
\begin{equation}\label{new step}P_X^P(z)=\frac{P^Q_X(z)-(1+z)\cdot \HS_{\ker \varphi^X_\bullet}(z)}{1-z^2},\end{equation} for all finitely generated $P$-modules $X$. (The calculation (\ref{new step}) is valid whenever the hypotheses of \ref{setup} are satisfied.) In the present calculation, one takes $X$ to be $R$.
The ring $P$ 
is a hypersurface; consequently, the Poincar\'e series 
$$P_{\kk}^P(z)=\frac{(1+z)^e}{1-z^2}$$is well known. (Indeed the resolution of $\kk$ by free $P$-modules is known.) Combine everything to obtain the formula for $d_R(z)$.
\end{proof}

\section{Homological consequences of the hypothesis that $R$ is compressed.}\label{AIL}

We deduce three homological consequences of the hypothesis that local Artinian ring $R$ is compressed. These Lemmas (\ref{4.2.b}, \ref{4.2.c}, and \ref{June-19}) play a major role in the proof of the main result, Theorem~\ref{5.1}.

\begin{lemma} \label{4.2.b} Let $(Q,\n,k)$ be a regular local ring and   $(R,\m,k)$ be the local ring $R=Q/I$ for some ideal $I$ of $Q$. Assume that $R$ is 
a compressed 
 local Artinian ring of embedding dimension $e$.
 If ${\tt v}(R)\le \ell$, then the map $\nu_i^Q(\m^\ell)$ of Definition~{\rm\ref{y(1.3.1)}} is zero for $i<e$. 
\end{lemma}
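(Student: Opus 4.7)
The plan is to apply Lemma~\ref{Liana!} to pass to the associated graded ring, and then to deduce the required Tor vanishing from the structure of compressed graded algebras.

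By Corollary~\ref{I-cor}, $\agr{R}$ is itself a compressed standard-graded Artinian $\kk$-algebra with the same socle polynomial as $R$; in particular, ${\tt v}(\agr{R})={\tt v}(R)$. Write $t={\tt v}(R)$, $G=\agr{R}$, and $S=\agr{Q}$. Lemma~\ref{Liana!} says that $\nu_i^Q(\m^\ell)=0$ provided $\Tor_{i,j}^S(G,\kk)=0$ for every $j\ge \ell+1+i$. Since $\ell\ge t$, it therefore suffices to prove the single claim
$$\Tor_{i,j}^S(G,\kk)=0\quad\text{whenever }i<e\text{ and }j\ge t+1+i,$$
i.e., that the Betti diagram of $G$ over $S$ has no nonzero entries in the strip $j-i\ge t+1$ outside the last column $i=e$.

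I would then prove this graded Tor vanishing by analysing the Koszul complex $K^G=K^S\otimes_S G$ of $G$ on a minimal generating set $x_1,\dots,x_e$ of the maximal ideal of $G$, whose homology computes $\Tor_\bullet^S(G,\kk)$. The $(i,j)$-piece of this complex is $G_{j-i}\otimes_\kk\wedge^i V$, where $V\cong \kk^e$ is spanned by the $x_i$. For $i<e$ and $d=j-i\ge t+1$, I would produce an explicit preimage in $G_{d-1}\otimes_\kk\wedge^{i+1}V$ for every cycle. The essential structural inputs are the multiplication isomorphisms~\eqref{M24.a-iso}, which hold in the compressed $G$ for every pair $(j,k)$ with $t\le j\le s$ and $1\le k\le s-j+1$, together with the identification $\ann_G(\m^j)=\m^{s-j+1}$ for $t\le j\le s$ from Corollary~\ref{large powers}(\ref{2.5.cii}). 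These precisely encode the ``extremal'' Hom structure between powers of the maximal ideal and the socle, and can be used to invert the Koszul differential in the range of interest.

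The main obstacle will be carrying out the Koszul construction uniformly across the possibly many socle degrees $\ell\in[t-1,s]$ appearing in the socle polynomial of $G$ and keeping careful track of signs in the exterior algebra. The exclusion $i<e$ is essential for the lemma: at $i=e$, the top of $K^G$ equals $G$ itself and its homology is $\socle(G)$, which by Theorem~\ref{comp-ring}(\ref{xxx.a}) has components in degrees up to $s\le 2t-1$ and so sits squarely in the strip $j\ge t+1+e$ where vanishing would be required.
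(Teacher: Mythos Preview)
Your reduction is exactly the one the paper makes: pass to the associated graded ring via Corollary~\ref{I-cor}, then invoke Lemma~\ref{Liana!} to reduce to the vanishing
\[
\Tor_{i,j}^{\agr{Q}}(\agr{R},\kk)=0\quad\text{for }i<e\text{ and }j\ge {\tt v}(R)+i+1.
\]
Where you diverge is in how this graded vanishing is obtained. The paper simply cites \cite[Prop.~16]{FL84}, which asserts that for a compressed standard-graded Artinian algebra, $\Tor^{\agr{Q}}_i(\agr{R},\kk)$ is concentrated in internal degrees ${\tt v}(R)+i-1$ and ${\tt v}(R)+i$ for $1\le i\le e-1$; this is precisely the ``almost linear resolution'' property of compressed algebras and it immediately gives what is needed.

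Your proposal instead is to re-derive this Fr\"oberg--Laksov result from scratch by explicitly inverting the Koszul differential using the isomorphisms~\eqref{M24.a-iso} and Corollary~\ref{large powers}(\ref{2.5.cii}). That is a plausible line of attack, but it is a genuine piece of work, not a formality: you are essentially reproving \cite[Prop.~16]{FL84}, and you yourself flag the main obstacle without resolving it. The isomorphisms~\eqref{M24.a-iso} encode duality between the socle filtration of $\m^j$ and Hom spaces into the socle; translating that into a uniform splitting of the Koszul differential in degrees $d\ge t+1$ across all Koszul positions $i<e$, and across all socle strata, requires an organized induction that you have not written down. So as it stands the proposal has the correct architecture but leaves the substantive step unproven; the paper's route of citing the literature is both shorter and complete.
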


\begin{proof}
Apply Corollary~\ref{I-cor} to see that $\agr{R}$ is 
 a standard-graded compressed   
 Artinian  ${\kk} $-algebra with the top socle degree of $\agr{R}$ equal to the top socle degree of $R$ and ${\tt v}(\agr{R})$ equal to ${\tt v}(R)$; and therefore, 
\cite[Prop. 16]{FL84} guarantees that $\Tor^{\agr{Q}}_i(\agr{R},\kk)$ is concentrated in degrees ${\tt v}(R)+i-1$ and ${\tt v}(R)+i$, for $1\le i\le e-1$. 
Of course, $\Tor^{\agr{Q}}_0(\agr{R},\kk)$ is concentrated in degree $0$. 
Lemma~\ref{Liana!}  ensures that $\nu_i^Q(\m^\ell)$ is identically zero for
all pairs $(i,\ell)$ with 
 $i<e$ and  ${\tt v}(R)\le \ell$. 
\end{proof}

Let $(R,\m,\kk)$ be a local Artinian ring.  This  ring is complete and local; hence the Cohen structure theorem guarantees that $R$ is the quotient of a regular local ring. We often use  information from  Data~\ref{data5}. This information all automatically exists as soon as the local Artinian ring $(R,\m,\kk)$ is chosen. Observe that 
the parameter $t$ of Data~\ref{data5}  is equal to the invariant ${\tt v}(R)$ of \ref{2.2}.(\ref{v(R)}).

 \begin{data}\label{data5}Let $(Q,\n,\kk)$ be a regular local ring and   $(R,\m,\kk)$ be the local Artinian ring ${R=Q/I}$, where  $I$ is an ideal of $Q$ with $I\subseteq \n^2$. Define $t$ to be the largest integer with ${I\subseteq \n^t}$. 
Let $(P,\pp,\kk)$ be the local  hypersurface ring $P=Q/L$, where
$L$ is the principal ideal of $Q$ generated by
a non-zero 
element of   
$I$ which is not in $\mathfrak n^{t+1}$, 
$(K,\partial)$ be the Koszul complex which is a minimal resolution of $\kk$ by free $Q$-modules, and $\pi:Q\to R$ and $\kappa:P\to R$ be the natural quotient homomorphisms. 
\end{data}

\begin{lemma}\label{4.2.c}Let $(R,\m,\kk)$ be  
a compressed 
 local Artinian ring of embedding dimension $e$ and top socle degree $s$.  Adopt Data~{\rm\ref{data5}}. Assume that the field   $\kk$ is infinite  and that $s=2t-1$.
Then there exists $G\in \mathfrak n^{t-1}K_1$ such that $\partial(G)$ generates $L$ and 
$$Z_e(\m^s\otimes_Q K)\subseteq  \bar g Z_{e-1}(\m^t\otimes_QK)\,,$$
where $g$ denotes the image of $G$ in $P\otimes_QK$ and $\bar g$ is the image of $G$ in $R\otimes_QK$. 
\end{lemma}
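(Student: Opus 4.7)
The plan is to marry the Koszul-level lift of a generator of $L$ provided by Remark~\ref{Rmk11} with the socle-hitting annihilator element produced by Lemma~\ref{FirstStep}, and then verify the containment by a short top-exterior-algebra computation.

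First I would use Remark~\ref{Rmk11}, where the hypothesis that $\kk$ is infinite enters: since by Data~\ref{data5} the chosen generator of $L$ lies in $\n^t\setminus\n^{t+1}$, there exist a minimal generating set $X_1,\dots,X_e$ for $\n$ and a generator $h$ of $L$ with
$$h=X_1^t+\sum_{i=2}^e X_ia_i,\qquad a_i\in\n^{t-1}.$$
Let $\epsilon_1,\dots,\epsilon_e$ be the standard $Q$-basis of $K_1$, with $\partial(\epsilon_i)=X_i$, and set
$$G=X_1^{t-1}\epsilon_1+\sum_{i=2}^e a_i\epsilon_i\in\n^{t-1}K_1.$$
Then $\partial(G)=h$ generates $L$, and the image $\bar g\in R\otimes_QK_1$ equals $x_1^{t-1}\epsilon_1+\sum_{i\ge 2}\bar a_i\epsilon_i$, where $x_i$ and $\bar a_i$ are the images in $R$ of $X_i$ and $a_i$. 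Put $\m'=(x_2,\dots,x_e)$, so $\m=(x_1)+\m'$.

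Next, since $\m\cdot\m^s=0$ the Koszul differential vanishes identically on $\m^s\otimes_QK_e$; hence $Z_e(\m^s\otimes_QK)=\m^s\otimes_QK_e=\m^s\cdot\omega$, where $\omega:=\epsilon_1\w\cdots\w\epsilon_e$. It therefore suffices to produce, for each $\sigma\in\m^s$, a cycle $\xi\in Z_{e-1}(\m^t\otimes_QK)$ with $\bar g\w\xi=\sigma\cdot\omega$. The hypotheses $t={\tt v}(R)$ (an observation built into Data~\ref{data5}) and $s=2t-1$ put Lemma~\ref{FirstStep} at our disposal for the decomposition $\m=(x_1)+\m'$; it delivers $\Theta\in\m^t\cap\ann_R(\m')$ with $x_1^{t-1}\Theta=\sigma$. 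Set $\omega_1=\epsilon_2\w\cdots\w\epsilon_e$ and put $\xi=\Theta\cdot\omega_1\in\m^t\otimes_QK_{e-1}$. Then $\partial(\omega_1)$ is a $Q$-linear combination of terms $X_j\cdot(\text{basis element of }K_{e-2})$ with $j\ge 2$, and since $\Theta$ annihilates each $x_j$ with $j\ge 2$, the Leibniz rule gives $\partial(\xi)=0$. Finally, using $\epsilon_i\w\omega_1=0$ for $i\ge 2$ (repeated factor) and $\epsilon_1\w\omega_1=\omega$,
$$\bar g\w\xi=x_1^{t-1}\Theta\cdot(\epsilon_1\w\omega_1)+\sum_{i=2}^e\bar a_i\Theta\cdot(\epsilon_i\w\omega_1)=\sigma\cdot\omega,$$
as required.

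The main obstacle is the construction of $\Theta$: without the compressed hypothesis there is no reason to expect $\Theta\in\ann_R(\m')\cap\m^t$ with a prescribed value $x_1^{t-1}\Theta=\sigma$ to exist. Securing $\Theta$ is exactly what Lemma~\ref{FirstStep} provides, and is the point at which both ``compressed'' and $s=2t-1$ genuinely do the work. Everything else is coordinate normalization (Remark~\ref{Rmk11}) plus the formal top-exterior identities together with the vanishing $\m^{s+1}=0$.
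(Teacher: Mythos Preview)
Your argument is correct and follows essentially the same route as the paper: normalize coordinates via Remark~\ref{Rmk11}, build $G$ as a preimage of $h$ in $\n^{t-1}K_1$, invoke Lemma~\ref{FirstStep} for the decomposition $\m=(x_1)+\m'$, and then verify the containment by the top-exterior-algebra identity $\epsilon_i\wedge\omega_1=0$ for $i\ge 2$ together with $\Theta\cdot\m'=0$. The only cosmetic difference is that the paper works with the whole ideal $\mathfrak q=\ann_R(\m')\cap\m^t$ at once (using $x_1^{t-1}\mathfrak q=\m^s$), whereas you pick an individual $\sigma\in\m^s$ and a corresponding $\Theta$; the computations are otherwise line-for-line parallel.
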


\begin{proof} 
The field $\kk$ is infinite; therefore we may apply Remark~\ref{Rmk11} and decompose 
$\n$ into subideals $(X_1)+\n'$ with $X_1$ a minimal generator of $\n$,  $\mu(\n')=e-1$,
and 
$h-X_1^t$ in the ideal $\n'\n^{t-1}$ of $Q$, for some
generator $h$ of $L$. 
The decomposition $\n= X_1Q+ \n'$ induces a decomposition $\m=x_1R+\m'$ with $x_1$ equal to the image of $X_1$ and $\m'$ equal to the image of $\n'$. 
Let $\mathfrak q$ be the ideal  $\ann_R(\m')\cap \m^t$ of $R$. 
We proved in Lemma~\ref{FirstStep} that 
\begin{equation}\label{We proved}x_1^{t-1}\mathfrak q=\m^s.\end{equation}
 Let $X_2,\dots,X_e$ be a minimal generating set  for $\n'$ and $T_1,\dots,T_e$ be a basis for $K_1$ with $\partial (T_i)=X_i$.
Recall that $h$ has the property that $h-X_1^t\in (X_2,\dots,X_e)\mathfrak n^{t-1}$.
It follows that there is an  
 element   
$G$  
in 
$K_1$ of the form  \begin{equation}\label{form}G=X_1^{t-1}T_1+\sum_{i=2}^e\alpha_i T_i,\end{equation} for some $\alpha_i\in \mathfrak n^{t-1}$,  with $\partial(G)=h$. The image of $G$ in $R\t_QK$, denoted by $\bar  g$, is a cycle in $Z_1(\mathfrak m^{t-1}\otimes_QK)$. Observe that
\begingroup\allowdisplaybreaks\begin{align*}Z_e(\m^s\otimes_QK)&=\m^s\cdot T_1\cdots T_e\\&=(x_1^{t-1}\mathfrak q)\cdot T_1 \cdots T_e&&\text{by (\ref{We proved})}\\
&=\mathfrak q (x_1^{t-1}T_1)\cdots T_e\\
&=\mathfrak q\Big(\bar g-\sum_{i=2}^e\alpha_i T_i\Big) T_2\cdots  T_e&&\text{by (\ref{form})}\\
&=\mathfrak q \bar g T_2\cdots T_e\\
&=\bar g\mathfrak q T_2\cdots T_e\\
&\subseteq \bar g Z_{e-1}(\mathfrak q \otimes_Q K)&&\text{ since $\partial(T_i)\mathfrak q=0$ 
 for $i\le 2\le e$}. \\
&\subseteq \bar g Z_{e-1}(\mathfrak \m^t \otimes_Q K)&&\text{ since $\mathfrak q\subseteq \m^t$}. 
\end{align*}\endgroup
\vskip-18pt\end{proof}

Lemma~\ref{June-19} is the third of three Lemmas in the section. These Lemmas are used in the proof of the main result. The proof of Lemma~\ref{June-19} is a continuation of the proof of Lemma~\ref{4.2.c}.
\begin{lemma}\label{June-19}
Let $(R,\m,\kk)$ be  
a compressed 
 local Artinian ring of embedding dimension $e$ and top socle degree $s$.
Adopt Data~{\rm\ref{data5}} 
with $s=2t-1$. The following statements hold.
\begin{enumerate}[\rm(a)]\item\label{6.1.b} If $j$ is  an integer which
satisfies $$t+1\le j\le s\quad{and}\quad \socle(R)\cap \m^j=\m^s,$$
then the  maps
$$\nu^P_i:\Tor_i^P(\m^{j},\kk)\to \Tor_i^P(\m^t,\kk),$$induced by the inclusion $\m^j\subseteq \m^t$, 
are zero for all $i$.
\item\label{June-19.b} The maps 
$\Tor_i^R(\m^{s},\kk)\to \Tor_i^R(\m^t,\kk)$, induced by the inclusion $\m^s \subseteq \m^t$, 
are zero for all $i$.
\end{enumerate}
\end{lemma}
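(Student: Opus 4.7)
The plan is to treat (a) via Lemma~\ref{6.20} (which reduces it to a $\Tor^Q$-composition) combined with Lemmas~\ref{4.2.b} and \ref{4.2.c}, and to treat (b) by a direct argument exploiting that $\m^s$ is annihilated by $\m$. For (a), I would take $N=\m^j$ and $M=\m^t$ and verify the hypotheses of Lemma~\ref{6.20}: one has $\n^{t-1}\m^j=\m^{t-1+j}=0$ because $t-1+j\ge 2t>s$, and $\n^{t-1}(\m^t/\m^j)=\m^s/\m^j=0$ because $j\le s$ forces $\m^s\subseteq\m^j$. Consequently (a) is equivalent to the vanishing, for every $i$, of the composition
\[
\Tor^Q_i(\m^j,\kk)\xrightarrow{\incl^Q_i}\Tor^Q_i(\m^t,\kk)\xrightarrow{\varphi^{\m^t}_i}\Tor^P_i(\m^t,\kk).
\]

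For $i<e$ the first arrow is zero because it is an iterated composition of the maps $\nu^Q_i(\m^\ell)$ for $t\le\ell\le j-1$, each of which vanishes by Lemma~\ref{4.2.b}. Since $Q$ is regular of embedding dimension $e$, the cases $i>e$ are vacuous, leaving only $i=e$. There, the Koszul complex $K$ gives $\Tor^Q_e(\m^j,\kk)=(\socle(R)\cap\m^j)\cdot T_1\cdots T_e$, which by the hypothesis of (a) equals $\m^s\cdot T_1\cdots T_e$; so the image of $\incl^Q_e$ is exactly $Z_e(\m^s\otimes_Q K)$, and Lemma~\ref{4.2.c} places this inside $\bar g\cdot Z_{e-1}(\m^t\otimes_Q K)$. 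Because the change of rings $\varphi$ is compatible with the module action of $\Tor^Q_\bullet(R,\kk)$, we have $\varphi^{\m^t}_e(\bar g\cdot z)=\varphi^R_1(\bar g)\cdot\varphi^{\m^t}_{e-1}(z)$, so it remains to verify $\varphi^R_1(\bar g)=0$. For this I would invoke the Shamash construction of a minimal $P$-free resolution of $\kk$ obtained from $K\otimes_Q P$ by adjoining a divided-power generator $S$ of degree two with $\partial S=G\otimes 1$: in this resolution (after base change to $R$) the element $\bar g$ becomes the boundary of $S\otimes_P 1_R$, hence $\varphi^R_1(\bar g)=0$. The most delicate step is tracking the compatibility of the product structures under the change of rings.

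For part (b), the crucial feature is that $\m^{s+1}=0$, so $\m^s$ is annihilated by $\m$ and hence $\m^s\cong\kk^c$ as $R$-modules, with $c=\dim_\kk\m^s$. If $F$ denotes a minimal $R$-free resolution of $\kk$, minimality together with $\m\cdot\m^s=0$ forces the differential on $\m^s\otimes_R F$ to be identically zero; consequently $\Tor^R_i(\m^s,\kk)\cong \m^s\otimes_\kk\Tor^R_i(\kk,\kk)$, which is generated in homological degree zero as a graded module over $\Tor^R_\bullet(\kk,\kk)$ (using that $\m^s$ is $\m$-torsion). Since the inclusion-induced map $\Tor^R_\bullet(\m^s,\kk)\to\Tor^R_\bullet(\m^t,\kk)$ is a morphism of graded $\Tor^R_\bullet(\kk,\kk)$-modules by functoriality, it is determined by its degree-zero component, namely the composition $\m^s\hookrightarrow\m^t\twoheadrightarrow\m^t/\m^{t+1}$. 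The hypothesis $s=2t-1$ with $t\ge 2$ gives $s\ge t+1$, so $\m^s\subseteq\m^{t+1}$ and this composition is zero. Hence the whole map vanishes, which proves (b).
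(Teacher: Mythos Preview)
Your treatment of part~(a) is essentially the paper's own proof: reduce via Lemma~\ref{6.20} to the vanishing of $\varphi^{\m^t}_i\circ\incl^Q_i$, kill the cases $i<e$ with Lemma~\ref{4.2.b}, and handle $i=e$ by writing the relevant cycles as $\bar g\cdot z'$ and showing they become boundaries after adjoining the divided-power variable $Y$ with $\partial(Y)=g$. Two small points: you should note, as the paper does, that Lemma~\ref{4.2.c} requires $\kk$ to be infinite, so a faithfully flat base change $R\to R[y]_{\m R[y]}$ is needed first; and your product-compatibility step is exactly the paper's computation $z=\bar g z'=\partial(Yz')$ in $(\m^t\otimes_Q K)\langle Y\rangle$, so you may as well phrase it that way rather than invoking an abstract multiplicativity of $\varphi$.

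Your argument for part~(b), however, has a genuine gap. You assert that the map $\Tor^R_\bullet(\m^s,\kk)\to\Tor^R_\bullet(\m^t,\kk)$ is a morphism of graded $\Tor^R_\bullet(\kk,\kk)$-modules and is therefore determined by its degree-zero component. But there is no natural degree-\emph{raising} action of $\Tor^R_\bullet(\kk,\kk)$ on $\Tor^R_\bullet(M,\kk)$ that is functorial in $M$: the external product $\Tor^R_p(\kk,\kk)\otimes\Tor^R_q(M,\kk)\to\Tor^R_{p+q}(M/\m M,\kk)$ lands in $\Tor$ of $M/\m M$, not of $M$, so while you do get a genuine module structure on $\Tor^R_\bullet(\m^s,\kk)$ (because $\m\cdot\m^s=0$), there is none on $\Tor^R_\bullet(\m^t,\kk)$ for the inclusion to respect. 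A small example shows the principle you are invoking actually fails: for $R=\kk[x]/(x^3)$ the inclusion $\m^2\hookrightarrow\m$ induces the zero map on $\Tor^R_0$ (since $\m^2\subseteq\m\cdot\m$) but a nonzero map on $\Tor^R_1$; one checks directly with the periodic resolution of $\kk$ that the cycle $x^2$ in $(\m^2\otimes_R F)_1$ maps to the nonzero class $x^2\in\ker(x\colon\m\to\m)=H_1(\m\otimes_R F)$. So vanishing in degree zero does not force vanishing everywhere.

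The paper's proof of (b) proceeds quite differently: it invokes Theorem~\ref{Snow} with $b=t$, $\tau=t-1$, and $z_1=\bar g$, verifying the hypothesis $\m^s K^R\subseteq \bar g\cdot Z(\m^t K^R)+B(\m^{s-1}K^R)$ by combining Lemma~\ref{4.2.c} (for the top-degree piece) with Lemma~\ref{4.2.b} (for $i<e$). That theorem is precisely the tool designed to propagate such Koszul-level information to vanishing of maps on $\Tor^R$, and there is no shortcut of the kind you attempted.
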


\begin{proof} 
 Without loss of generality, we may assume that $\kk$ is infinite. Indeed, 
if $\kk'=\kk(y)$, $Q'=Q[y]_{\mathfrak nQ[y]}$, $P'=P[y]_{\mathfrak pP[y]}$, $R'=R[y]_{\mathfrak mR[y]}$,  and $\mathfrak m'=\mathfrak mR'$,
then   the extensions $Q\to Q'$,  $P\to P'$, and $R\to R'$ are faithfully flat, and therefore, $\nu^P_i=0$ if and only if $\nu^{P'}_i=0$, and  
$$\Tor_i^R(\m^{s},\kk)\to \Tor_i^R(\m^t,\kk)\text{ is zero}\iff
\Tor_i^{R'}({\m'}^{s},\kk')\to \Tor_i^{R'}({\m'}^t,\kk')\text{ is zero}.
$$

\medskip\noindent(\ref{6.1.b}) Let $\nu^Q_i\colon \Tor^Q_i(\mathfrak m^{j},\kk)\to \Tor^Q_i(\mathfrak m^{t},\kk)$ denote the map induced by the inclusion $\mathfrak m^{j}\subseteq\mathfrak m^{t}$. Apply Lemma~\ref{6.20} to the inclusion $\m^{j}\subseteq \m^t$. Observe that $\n^{t-1}$ annihilates $\m^{j}$ and $\m^t/\m^{j}$. Observe also that the map $\incl_i^A$ of  \ref{6.20} is now denoted $\nu_i^A$ for $A=P$ or $A=Q$. We conclude that assertion (\ref{6.1.b}) is equivalent to the assertion
\begin{equation}
\label{=} \varphi_i^{\m^t}\circ \nu^Q_i=0,
\quad \text{for all $i$,}
\end{equation}
where $\varphi^{\m^t}_i\colon \Tor_i^Q(\m^{t}, \kk)\to\Tor_i^P(\m^{t},\kk)$ is the map induced by the natural quotient map $Q\to P$. 
If $0\le i\le e-1$, then Lemma~\ref{4.2.b} yields that the map $\nu_i^Q(\m^t)$ of Definition~\ref{y(1.3.1)} is identically zero. The map $\nu_i^Q$ factors through 
$\nu_i^Q(\m^t)$; therefore, $\nu_i^Q=0$ and (\ref{=}) holds for $0\le i\le e-1$.

We now prove  \eqref{=} for $i=e$.
Recall the Koszul complex $(K,\partial)$ of Data~\ref{data5} which is a resolution of $\kk$ by free  $Q$-modules. We identify the functors
\begin{equation}\label{identify}\HH_{\bullet}(-\t_Q K) \quad\text{and} \quad \Tor_\bullet^Q(-,\kk).\end{equation}
Observe that 
\begin{gather}
 \Tor^Q_e(\mathfrak m^{j},\kk)=\HH_e(\m^{j}\otimes_QK)=\socle(\m^{j})\otimes_QK_e\\\intertext{and}
\Tor^Q_e(\mathfrak m^{t},\kk)=\HH_e(\m^{t}\otimes_QK)=\socle(\m^{t})\otimes_QK_e=(\socle(R)\cap \m^t)\otimes_QK_e.
\end{gather}
The hypothesis that $\socle(R)\cap \m^{j}=\m^s$ yields  $\socle(\m^{j})\otimes_QK_e=\m^s\otimes_QK_e$.
Thus, $\im \nu^Q_e$ is equal to the submodule $\m^s\otimes_QK_e$ of $(\socle(R)\cap \m^t)\otimes_QK_e.$

We
 compute $\varphi^{\m^t}_e(\m^s\otimes_QK_e)$.
 Let $G$ be as in Lemma~\ref{4.2.c}. 
The image of $G$ in $P\t_QK$, denoted by $g$, is a cycle and the minimal resolution of $\kk$ by free $P$-modules is the Tate complex $T=(P\t_QK)\langle Y\rangle$, with \begin{equation}\label{partial(Y)}\partial(Y)=g.\end{equation} 
The homomorphism   $\varphi^{\m^t}_e$  is induced by the natural map 
\begin{equation}\notag\mathfrak m^{t}\otimes_Q K\longrightarrow \mathfrak  m^{t}\otimes_P T=\mathfrak \m^{t}\otimes_P(P\t_Q K)\langle Y\rangle=\mathfrak (\m^{t}\t_Q K)\langle Y\rangle;\end{equation}
hence $\varphi^{\m^t}_e$ is the natural map 
$$
\varphi^{\m^t}_e\colon (\socle(R)\cap \m^t)\otimes_QK_e\to \HH_e\left(\mathfrak (\m^{t}\t_Q K)\langle Y\rangle\right).
$$ Let $z\in \m^s\otimes_QK_e$.  According to Lemma~\ref{4.2.c}, $z=\bar g z'$ for some $z'$ in 
$Z_{e-1}(\m^{t}\t_QK)$,
 where $\bar g$ is the image of $g$ in $R\t_QK$. 
The defining property of $Y$, given in (\ref{partial(Y)}), together with the graded  product rule yields 
\begin{equation}\label{XT}z=\bar g z'=\partial (Y) z'=\partial (Yz')-Y\partial(z')=\partial (Yz'),\end{equation}
which establishes that the image of $z$ under the map $\varphi^{\m^t}_e$ is represented by a boundary
in $(\m^t\t_QK)\langle Y\rangle$; and therefore is zero in $\HH_e\left(\mathfrak (\m^{t}\t_Q K)\langle Y\rangle\right)=\Tor_e^P(\m^t,\kk)$. 
This finishes the proof  of (\ref{=}) and hence the proof of 
(\ref{6.1.b}).

\medskip\noindent(\ref{June-19.b}) Apply Theorem~\ref{Snow} with $b=t$, $\tau=t-1$, $K^R=R\t_Q K$, and $z_1=\bar g$. Recall that  $\bar g\in Z_1(\m^{t-1}\t_QK)$. It is clear that the one-cycle $\bar g$ squares to zero. We verify that hypothesis (\ref{Snow-hyp}) is satisfied.  On the one hand, Lemma~\ref{4.2.c} yields that $$\m^s\t_Q K_e\subseteq \bar g Z_{e-1}(\m^t\t_Q K)$$ and, on the other hand, Lemma~\ref{4.2.b} yields
that $\Tor^Q_i(\m^s\t_Q K)\to \Tor^Q_{i}(\m^{s-1}\t_Q K)$ is the zero map for $i<e$. It follows that $\m^s\t_Q K_i\subset B(\m^{s-1}\t_Q K)$ for $i<e$.
\end{proof}

The following Theorem is a special case of \cite[Thm.~3.1]{Snow}. This result was used in the proof of Lemma~\ref{June-19}.
\begin{theorem}\label{Snow} {\rm \cite{Snow}} Let $(R,\m, \kk)$ be an Artinian local ring with top socle degree $s$, $K^R$ be the Koszul complex on a minimal generating set of $\m$, and   $\tau$ and $b$ be integers
with $s-\tau\le b\le s - 1$ and $2\le \tau+1 \le {\tt v}(R)$.
If there exists a 
cycle $z_1$ in  $Z(\m^\tau K^R)$  with  $z_1^2 = 0$ and  
\begin{equation}\label{Snow-hyp}\m^s K^R \subseteq  z_1\cdot  Z(\m^b K^R)+B(\m^{s-1}K^R),\end{equation} 
then 
the maps $\Tor^R_
i (\m^s, \kk) \to \Tor^R
_i (\m^b, \kk),$ induced by the inclusion $\m^s \subseteq \m^b$, are  zero for all $i$.\end{theorem}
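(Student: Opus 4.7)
The plan is to work inside a minimal free resolution $\mathbb F$ of $\kk$ over $R$ obtained by extending $K^R$ via the Tate construction. Because $z_1\in Z_1(\m^\tau K^R)$ and $z_1^2=0$, we may adjoin a divided-power variable $Y$ of degree $2$ with $\partial Y=z_1$ and $\partial Y^{(k)}=z_1Y^{(k-1)}$, then continue adjoining further Tate variables $Y_2,Y_3,\dots$ to kill the remaining homology, obtaining $\mathbb F=K^R\langle Y,Y_2,Y_3,\dots\rangle$. For any ideal $M$ of $R$ this gives $\Tor^R_i(M,\kk)=\HH_i(M\otimes_R\mathbb F)=\HH_i(M\mathbb F)$, so the task reduces to showing that every cycle in $\m^s\mathbb F$ is a boundary in $\m^b\mathbb F$.

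The engine of the argument is a single calculation at the level of $K^R$. Given a cycle $\omega\in Z_i(\m^sK^R)$, the hypothesis produces $\alpha\in Z_{i-1}(\m^bK^R)$ and $\beta\in\m^{s-1}K^R\subseteq\m^bK^R$ (the inclusion uses $b\le s-1$) with $\omega=z_1\alpha+\partial\beta$. Since $\partial Y=z_1$ and $\alpha$ is a cycle, the graded Leibniz rule gives $\partial(Y\alpha)=z_1\alpha$, so $\omega=\partial(Y\alpha+\beta)$ and $Y\alpha+\beta\in\m^b\mathbb F_{i+1}$. The Tate variable $Y$, whose very existence depends on $z_1^2=0$, is what converts the troublesome $z_1$-factor into an honest boundary.

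For a cycle involving $Y$, say $\omega=\sum_{k=0}^K u_kY^{(k)}\in Z_i(\m^sK^R\langle Y\rangle)$ with $u_k\in\m^sK^R$, the cycle condition forces $\partial u_k=-z_1u_{k+1}$. I construct the desired preimage $\eta=\sum_k v_kY^{(k)}$ inductively, processing $k$ from largest down. At each step, set $\gamma_k=u_k-z_1\beta_{k+1}$ (with $\beta_{K+1}=0$); one checks by induction, using $z_1^2=0$ essentially, that $\gamma_k$ is a cycle in $\m^sK^R$. Applying the hypothesis yields $\gamma_k=z_1\alpha_k+\partial\beta_k$, and setting $v_k=\beta_k+\alpha_{k-1}$ gives $\partial v_k+z_1v_{k+1}=u_k$ by a direct computation using that $\alpha_{k-1}$ is a cycle; moreover $v_k\in\m^bK^R$ because $\beta_k\in\m^{s-1}K^R\subseteq\m^bK^R$ and $\alpha_{k-1}\in\m^bK^R$. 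Extending this iteration to cycles involving the further Tate variables $Y_2,Y_3,\dots$ will be the main obstacle; the expected pattern is the same—cycle relations propagating through higher Tate depth, the hypothesis and $z_1^2=0$ fed in at each stage—and the filtration bounds $b\le s-1$ and $s-\tau\le b$ are precisely what keep all auxiliary elements inside $\m^sK^R$ and $\m^bK^R$ respectively throughout the book-keeping.
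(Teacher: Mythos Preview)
The paper does not actually prove this statement; it cites it as a special case of \cite[Thm.~3.1]{Snow}. So there is no proof in the paper to compare against, and your attempt must stand on its own.

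Your handling of cycles in $K^R$ and in $K^R\langle Y\rangle$ is sound (up to minor bookkeeping: your element $\eta$ should carry one extra top term $\alpha_K Y^{(K+1)}$ so that the coefficient of $Y^{(K)}$ in $\partial\eta$ closes to $u_K$). The genuine gap is exactly where you flag it: the passage to the higher Tate variables $Y_2,Y_3,\dots$. Here the pattern does \emph{not} simply repeat. Writing $A_n=K^R\langle Y,Y_2,\dots,Y_n\rangle$ and $\partial Y_{n+1}=w_{n+1}\in \m A_n$, a cycle $\omega=\sum_k u_kY_{n+1}^{(k)}$ in $\m^s A_{n+1}$ has $w_{n+1}u_k\in\m^{s+1}=0$, so each $u_k$ is a cycle in $\m^sA_n$ and, by induction, $u_k=\partial v_k$ with $v_k\in\m^bA_n$. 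But then
\[
\omega-\partial\Big(\sum_k v_kY_{n+1}^{(k)}\Big)=\pm\sum_{k\ge1}v_kw_{n+1}Y_{n+1}^{(k-1)}
\]
is a cycle lying only in $\m^{b+1}A_{n+1}$, not in $\m^sA_{n+1}$; since $b+1\le s$ in general, you cannot feed this residual back into the hypothesis \eqref{Snow-hyp}, and the induction does not close. (If one tries a spectral-sequence shortcut---filter $\mathbb F$ by word-length in the higher Tate variables so that the $E^1$-map is a sum of copies of $\HH(\m^sA)\to\HH(\m^bA)$---vanishing on $E^1$ does not force vanishing on homology, so that route also fails without more input.)

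What is missing is a strengthened inductive statement that survives the passage from $A_n$ to $A_{n+1}$: one that keeps the preimage within $\m^{s-1}$ plus a controlled $z_1$-multiple (so that multiplication by $w_{n+1}\in\m A_n$ pushes the residual back into $\m^s$), rather than merely within $\m^b$. This is where the bound $s-\tau\le b$ and the specific form of \eqref{Snow-hyp} (with $B(\m^{s-1}K^R)$ rather than $B(\m^bK^R)$) earn their keep. Until you carry out that strengthened induction, the argument is incomplete.
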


\section{Proof of the main result.}\label{mainResult}

In this section we prove Theorem~\ref{5.1}, which is the main result of the paper.
The short version of the statement is  
``If $R$ is  a compressed local Artinian ring 
with
top socle degree $s$, with $s$ odd, $5\le s$,
and $\socle(R)\cap \m^{s-1}=\m^s$,
then  
the Poincar\'e series of all finitely generated modules over $R$ 
are 
rational, sharing a common denominator,
 and  there is a Golod homomorphism from a complete intersection onto $R$.'' 
Recall that the data of \ref{data5} is constructed from $R$.

\begin{theorem}\label{5.1} Let $(R,\m,\kk)$ be  
a compressed 
 local Artinian ring of embedding dimension $e$ and top socle degree $s$.
 Assume that 
 $s$ is odd, $5\le s$, and
$$\socle(R)\cap \m^{s-1}=\m^s.$$   
Adopt Data~{\rm\ref{data5}}.  Then $s\le 2t-1$ and the following statements hold{\rm:}
$$\begin{cases}
\text{$\kappa:P\to R$ is a Golod homomorphism,}&\text{if $s=2t-1$, and}\\
\text{$\pi:Q\to R$ is a Golod homomorphism,}&\text{if $s<2t-1$}.\end{cases}$$
Furthermore, if $d_R(z)$ is the polynomial
$$d_R(z) = \begin{cases}1 - z
(
P^Q_R
(z) - 1
)
+ c_sz^{e+1}(1 + z),&\text{if $s=2t-1$, and}\\
1 - z
(
P^Q_R
(z) - 1
),&\text{if $s<2t-1$,}\end{cases}$$
where $c_s=\dim_{\kk}(\m^s)$ then,
for every finitely generated $R$-module $M$, there exists a polynomial $p_M(z)$ in $\mathbb Z[z]$ with
$$P^R_
M(z)d_R(z) = p_M(z).$$
In particular, $p_{\kk}(z)=(1+z)^e$.
 \end{theorem}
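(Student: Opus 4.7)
The bound $s\le 2t-1$ is Theorem~\ref{comp-ring}.(\ref{xxx.a}). Because $s$ is odd, the case $s<2t-1$ forces $s\le 2t-3$, and Observation~\ref{xxx.b} then gives at once that $\pi\colon Q\to R$ is a Golod homomorphism. In this situation $d_R(z)=1-z(P^Q_R(z)-1)$ is the familiar Golod denominator $(1+z)^e/P^R_\kk(z)$, and the common-denominator conclusion is the classical consequence of Golod's theorem that also underlies Theorem~\ref{Levin}.

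\textbf{The critical case $s=2t-1$} occupies the rest of the proof. I would apply Lemma~\ref{RS1.2} to $\kappa\colon P\to R$ with the choice $a=t-1$, so that $2a=s-1$. Condition~(b) asks that the inclusion $\m^{s-1}\subseteq\m^{t-1}$ induce the zero map on $\Tor^P_i(-,\kk)$. Since $s-1\ge t$, this inclusion factors through $\m^t$, and the hypothesis $\socle(R)\cap\m^{s-1}=\m^s$ is exactly what Lemma~\ref{June-19}.(\ref{6.1.b}) requires for $j=s-1$, yielding that $\Tor^P_i(\m^{s-1},\kk)\to\Tor^P_i(\m^t,\kk)$ is zero; composition with the further map to $\Tor^P_i(\m^{t-1},\kk)$ completes condition~(b). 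For condition~(a), namely vanishing of $\Tor^P_i(R,\kk)\to\Tor^P_i(R/\m^{t-1},\kk)$ for positive $i$, Lemma~\ref{4.2.a} supplies the analogue over $Q$ (since $t-1={\tt v}(R)-1$), and I plan to transfer it across the change of rings $\varphi\colon Q\to P$ via Lemmas~\ref{6.19} and~\ref{6.20}. The identity $\n^{t-1}(R/\m^{t-1})=0$ and the vanishing of $\nu^Q_i(\m^\ell)$ for $\ell\ge t$ and $i<e$ (Lemma~\ref{4.2.b}) take care of all homological degrees except $i=e$. I expect this single top-degree step to be the main obstacle, to be handled exactly as in the Tate-complex computation that concludes the proof of Lemma~\ref{June-19}.(\ref{6.1.b}): the element $\bar g$ of Lemma~\ref{4.2.c} allows one to rewrite each top-degree cycle in $\m^s\otimes K_e$ as $\partial(Yz')$ in the Tate extension $(P\otimes K)\langle Y\rangle$, hence as a boundary over $P$.

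\textbf{Denominator.} With $\kappa$ Golod established, Theorem~\ref{Levin} reduces the problem to computing $\HS_{\ker\varphi^R_\bullet}(z)$. One checks degree by degree. In degree $1$, the image of a generator $h$ of $L=(h)$ is non-zero in $\Tor^Q_1(R,\kk)=I/\n I$ (since $h\notin\n I\subseteq\n^{t+1}$) and dies in $\Tor^P_1(R,\kk)$, giving a one-dimensional contribution. In degree $e$, where $\Tor^Q_e(R,\kk)=\socle(R)\otimes K_e$, Lemma~\ref{4.2.c} combined with the Tate-complex argument used in Lemma~\ref{June-19}.(\ref{6.1.b}) identifies the kernel of $\varphi^R_e$ with $\m^s\otimes K_e\cong\kk^{c_s}$. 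In the intermediate degrees $2\le i\le e-1$, the compressed hypothesis concentrates $\Tor^Q_i(R,\kk)$ in the two strands provided by \cite[Prop.~16]{FL84} (as already exploited in Lemma~\ref{4.2.b}), and a strand-by-strand analysis of the change-of-rings long exact sequence arising from $0\to Q\xrightarrow{h}Q\to P\to 0$ forces $\ker\varphi^R_i=0$. The resulting $\HS_{\ker\varphi^R_\bullet}(z)=z+c_sz^e$, substituted into the formula of Theorem~\ref{Levin}, yields the asserted $d_R(z)=1-z(P^Q_R(z)-1)+c_sz^{e+1}(1+z)$. In both cases, $p_\kk(z)=(1+z)^e$ is the specialization of Theorem~\ref{Levin} to $M=\kk$.
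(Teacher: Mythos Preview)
Your overall architecture matches the paper's: the easy case $s<2t-1$ via Observation~\ref{xxx.b}, the critical case via Lemma~\ref{RS1.2} with $a=t-1$, and the denominator via Theorem~\ref{Levin} after computing $\HS_{\ker\varphi^R_\bullet}(z)$. Two points deserve comment.

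\medskip
\textbf{Condition~(a) of Lemma~\ref{RS1.2}.} The paper does not transfer this from $Q$ to $P$; it simply cites \cite[Lem.~1.4]{RS}, whose proof is the $P$-analogue of Lemma~\ref{4.2.a}. Your proposed detour through Lemmas~\ref{6.19} and~\ref{6.20} is awkward because those lemmas are stated for inclusions $N\subseteq M$, not quotient maps, and you would be relying on top-degree control that you have not yet established at this stage of the argument. The direct route is both shorter and avoids the circularity.

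\medskip
\textbf{The computation of $\ker\varphi^R_e$.} Here there is a genuine gap. Lemma~\ref{4.2.c} together with the Tate boundary identity $\bar g z'=\partial(Yz')$ shows only the inclusion $\m^s\otimes_QK_e\subseteq\ker\varphi^R_e$. The reverse inclusion is the substance of the paper's Lemma~\ref{June-16}: given $w\in\socle(R)\otimes K_e$ that bounds in $(R\otimes_QK)\langle Y\rangle$, one writes $w=\partial(\sum Y^{(i)}a_i)$, reads off the system~(\ref{possible}), and then runs a descending induction using~\ref{repeat} to peel each $a_i$ into the form $\partial(b_i)+c_i+\bar g d_i$ with $c_i\in\m^{t-1}\otimes K$. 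This forces $w=\bar g c_1\in(\socle(R)\cap\m^{2t-2})\otimes K_e$, and only now does the hypothesis $\socle(R)\cap\m^{s-1}=\m^s$ finish the job. None of this is visible from the tools you cite; without it you cannot conclude that $\dim_\kk\ker\varphi^R_e=c_s$ rather than something larger.

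\medskip
\textbf{Injectivity for $2\le i\le e-1$.} Your ``strand-by-strand analysis of the change-of-rings long exact sequence'' is not how the paper proceeds, and it is not clear it would succeed: the map $\varphi^R_i$ is the subtle Shamash comparison, and degree bookkeeping alone does not obviously control its kernel. The paper instead applies Lemma~\ref{6.19} to the inclusion $\m^{2t-2}\subseteq\m^{t-1}$ (using $3\le t$ so that the map~(\ref{J17.1}) factors through the zero map~(\ref{J17.2}) of Lemma~\ref{4.2.b}), deduces that $\varphi_i^{\m^{t-1}}$ is injective, and then invokes the commutative diagram from the proof of \cite[Lem.~3.4]{RS} to pass from $\m^{t-1}$ to $R$.
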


\begin{proof}
It is shown in Theorem~\ref{comp-ring}.(\ref{xxx.a}) that  $s\le 2t-1$.
If $s<2t-1$, then it is shown in the proof and statement of Observation~\ref{xxx.b} that $\pi$ is a Golod homomorphism and  $R$ is a Golod ring. The statement about the common denominator $d_R(z)$ is due to Lescot \cite{L90}, see also \cite[Thm.~5.3.2]{A98}.

Henceforth, we assume $s=2t-1$. 
The following two conditions hold: 
\begin{chunk-no-advance}\label{6.1.a} the map $\Tor^ P_i(R,\kk)\to\Tor^ P_i(R/\mathfrak m^{t-1},\kk),$ induced by the canonical quotient map $R\to R/\mathfrak m^{t-1}$, is zero for all positive $i$, and\end{chunk-no-advance}
\begin{chunk-no-advance}\label{A6.1.b} 
the map  
$$\nu^P_i:\Tor^P_i(\mathfrak m^{2t-2},\kk)\to \Tor^P_i(\mathfrak m^{t},\kk),$$ 
 induced by the inclusion $\m^{2t-2}\subseteq\mathfrak m^{t}$,
 is zero for all non-negative integers $i$.\end{chunk-no-advance}

Indeed, assertion  \ref{6.1.a} follows from \cite[Lemma 1.4]{RS}, whose proof is similar to the proof of Lemma~\ref{4.2.a} and assertion \ref{A6.1.b} is established in Lemma~\ref{June-19}.(\ref{6.1.b}) with $j=s-1$. The  hypothesis 
$$\socle(R)\cap \m^{s-1}=\m^s$$ of the present result is used to verify the critical hypothesis  $\socle(R)\cap \m^{j}=\m^s$ of Lemma~\ref{June-19}.

Now that  \ref{A6.1.b} holds,  the  map $\Tor^P_i(\mathfrak m^{2t-2},\kk)\to \Tor^P_i(\mathfrak m^{t-1},\kk)$ is also zero, and  Lemma \ref{RS1.2} can be applied with $a=t-1$ to conclude that $\kappa$ is Golod.

Apply Theorem~\ref{Levin} to finish the proof. It remains to prove that the Hilbert series of the kernel of 
$$\varphi^R_\bullet:\Tor_\bullet^Q(R,\kk)\to \Tor_\bullet^P(R,\kk)$$
is $\HS_{\ker(\varphi^R_\bullet)}(z)=z+c_sz^e$. It suffices to prove that
\begin{equation}\label{untagged}\dim_{\kk}\ker(\varphi^R_i)=\begin{cases} 0,&\text{if $i=0$ or $2\le i\le e-1$,}\\
1,&\text{if $i=1$, and}\\
\dim_{\kk}\m^s,&\text{if $i=e$}.\end{cases}\end{equation}
Observe that  $\varphi_0^R:\Tor_0^Q(R,\kk)\to \Tor_0^P(R,\kk)$ is the isomorphism $\kk\to\kk$. It follows that $\dim_{\kk}\ker(\varphi_0^R)=0$. 
Observe that $\varphi_1^R:\Tor_1^Q(R,\kk)\to \Tor_1^P(R,\kk)$ is the natural map
$$\frac{\ker \pi}{\n \ker\pi}\to \frac{\ker\pi}{\n\ker \pi+L}.$$ 
The kernel of this map has  dimension $1$ because one of the minimal generators of $\ker \pi $ has been sent to zero. It is shown in Lemma~\ref{June-16} that  $\ker(\varphi^R_e)\cong \m^s$. We complete the proof of (\ref{untagged}), hence the proof of the Theorem, by showing that 
\begin{equation}\label{from RS}\varphi^R_i\text{ is injective for $2\le i\le e-1$.}\end{equation} Fix $i$ with $2\le i\le e-1$. The hypothesis $$5\le s=2t-1$$ ensures that $3\le t$; hence,
$$\m^{2t-2}\subseteq \m^{t+1}\subseteq \m^t\subseteq \m^{t-1}$$ and 
\begin{equation}\label{J17.1}\Tor^Q_i(\m^{2t-2},\kk)\xrightarrow{\incl_i}\Tor^Q_i(\m^{t-1},\kk)\end{equation} factors through 
\begin{equation}\label{J17.2}\Tor^Q_i(\m^{t+1},\kk)\xrightarrow{\incl_i}\Tor^Q_i(\m^{t},\kk).\end{equation}Lemma~\ref{4.2.b} yields that (\ref{J17.2}) is the zero map; hence,  (\ref{J17.1}) is also the zero map. 
Apply Lemma~\ref{6.19}, together with the fact that (\ref{J17.1}) is the zero map, to the inclusion $\m^{2t-2}\subseteq \m^{t-1}$. Observe that $\n^{t-1}$ annihilates $\m^{t-1}/\m^{2t-2}$. Conclude that $$\varphi_i^{\m^{t-1}}:\Tor_i^Q(\m^{t-1},\kk)\to \Tor_i^P(\m^{t-1},\kk)$$ is injective. 
 One can now employ the commutative diagram in  proof of Claim~2 in the proof of \cite[Lem.~3.4]{RS} to complete the proof of (\ref{from RS}).
\end{proof}

The following calculation is used in the proof of Theorem~\ref{5.1}.
\begin{lemma}\label{June-16}Adopt the notation and hypotheses of Theorem~{\rm\ref{5.1}} with $s=2t-1$. Let $\varphi_e^R\colon \Tor^Q_e(R,k)\to \Tor^P_e(R,k)$ be the map induced by the natural quotient map $Q\to P$ and let $K$ be the Koszul complex which is a minimal resolution of $\kk$ by free $Q$-modules. Then $$\ker(\varphi_e^R)= \m^s\otimes_QK_e.$$ \end{lemma}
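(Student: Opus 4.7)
The plan is to establish the two containments.

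For the forward containment $\m^s\t_QK_e\subseteq\ker\varphi_e^R$, I would follow the argument used in the proof of Lemma~\ref{June-19}(a). Given $z\in\m^s\t_QK_e$, Lemma~\ref{4.2.c} produces a cycle $z'\in Z_{e-1}(\m^t\t_QK)$ with $z=\bar g\,z'$, and in the Tate extension $R\t_PT=(R\t_QK)\langle Y\rangle$ one computes $\partial(Yz')=\bar g\,z'-Y\partial z'=z$, so $z$ is a boundary and $\varphi_e^R(z)=0$.

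For the reverse containment, the plan is to apply $\Tor^Q_\bullet(-,\kk)$ and $\Tor^P_\bullet(-,\kk)$ to the short exact sequence $0\to\m^s\to R\to R/\m^s\to 0$. Because $\m\m^s=0$, the module $\m^s$ is a $\kk$-vector space of dimension $c_s$. On the $Q$-side, $\Tor^Q_{e+1}(R/\m^s,\kk)=0$ since $Q$ is regular of dimension $e$, so the long exact sequence yields an embedding
\[
\socle(R)/\m^s\hookrightarrow\Tor^Q_e(R/\m^s,\kk).
\]
On the $P$-side, Lemma~\ref{June-19}(a) applied with $j=s$ (its hypotheses $t+1\le s$ and $\socle(R)\cap\m^s=\m^s$ are automatic) gives that $\Tor^P_i(\m^s,\kk)\to\Tor^P_i(\m^t,\kk)$, and hence $\Tor^P_i(\m^s,\kk)\to\Tor^P_i(R,\kk)$, is zero for all $i$; consequently $\Tor^P_e(R,\kk)\hookrightarrow\Tor^P_e(R/\m^s,\kk)$.

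The change-of-rings naturality assembles these into a commutative square in which the left vertical map $\socle(R)\to\Tor^Q_e(R/\m^s,\kk)$ has kernel $\m^s$ and the right vertical map $\Tor^P_e(R,\kk)\to\Tor^P_e(R/\m^s,\kk)$ is injective. A diagram chase then reduces the equality $\ker\varphi_e^R=\m^s$ to the claim that $\varphi_e^{R/\m^s}$ is injective on the image $\socle(R)/\m^s\subseteq\Tor^Q_e(R/\m^s,\kk)$.

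The main obstacle will be verifying this restricted injectivity. The ring $R/\m^s$ has top socle degree $s-1=2t-2$ and ${\tt v}(R/\m^s)=t$, sitting just outside the scope of Observation~\ref{xxx.b}. However, the hypothesis $\socle(R)\cap\m^{s-1}=\m^s$ forces each nonzero class of $\socle(R)/\m^s$ to come from a socle element of $R$ lying outside $\m^{s-1}$, so its image in $R/\m^s$ avoids $\bar\m^{s-1}$. I would complete the argument either by adapting Lemmas~\ref{4.2.b} and~\ref{4.2.c} to $R/\m^s$, or via a direct chain-level analysis in $(R\t_QK)\langle Y\rangle$: given a witness $\sigma T_1\cdots T_e=\partial(Yb_1+Y^{(2)}b_2+\cdots)$ for $\varphi_e^R(\sigma)=0$, the plan is to use Lemma~\ref{4.2.b} iteratively to arrange $b_1\in\m^t\t_QK_{e-1}$, which yields $\sigma=\bar g\,b_1\in\m^{t-1}\cdot\m^t\subseteq\m^{2t-1}=\m^s$.
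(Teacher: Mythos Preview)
Your forward containment is correct and matches the paper's argument.

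For the reverse containment there are two problems.

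First, the reduction via $R/\m^s$ is circular. With the bottom arrow $\Tor^P_e(R,\kk)\hookrightarrow\Tor^P_e(R/\m^s,\kk)$ injective, the statement ``$\varphi_e^{R/\m^s}$ is injective on the image $\socle(R)/\m^s$'' is \emph{equivalent}, via the very square you assembled, to ``$\ker\varphi_e^R\subseteq\m^s\otimes_QK_e$''; nothing has been gained. Moreover, $R/\m^s$ has even top socle degree $2t-2$ and is not known to be compressed, so there is no reason to expect Lemmas~\ref{4.2.b} and~\ref{4.2.c} to transfer to it.

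Second, your direct chain-level analysis is indeed the right approach---it is exactly what the paper carries out---but two details are off. The tool one needs is the consequence of Lemma~\ref{4.2.a} that $\Tor_i^Q(R/\m^t,\kk)\to\Tor_i^Q(R/\m^{t-1},\kk)$ vanishes for $i\ge1$, not Lemma~\ref{4.2.b}, which concerns $\nu_i^Q(\m^\ell)$ only for $\ell\ge t$ and says nothing about an element that merely lies in $R\otimes_QK$. More importantly, the descending induction on the divided-power index only produces, for your $b_1$, a decomposition $b_1=\partial(\text{--})+c_1+\bar g\,(\text{--})$ with $c_1\in\m^{t-1}\otimes_QK_{e-1}$; one cannot push to $c_1\in\m^t$, since $I$ has minimal generators in $\n^t\setminus\n^{t+1}$ and hence already $\Tor_1^Q(R,\kk)\to\Tor_1^Q(R/\m^t,\kk)$ is nonzero. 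Thus the chain argument yields only $\sigma=\bar g\,c_1\in\m^{2t-2}\otimes_QK_e=\m^{s-1}\otimes_QK_e$. The final step is precisely where the hypothesis $\socle(R)\cap\m^{s-1}=\m^s$ enters: since $\sigma\in(\socle(R)\cap\m^{s-1})\otimes_QK_e$, one concludes $\sigma\in\m^s\otimes_QK_e$. Your plan to land directly in $\m^s$, thereby bypassing this hypothesis, cannot succeed.
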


\begin{proof} As described at the beginning of the proof of Lemma~\ref{June-19}, it does no harm to assume that $\kk$ is infinite.
The following consequence of Lemma~\ref{4.2.a} is used repeatedly in this proof. 

\begin{chunk-no-advance}\label{repeat} The homomorphism
$\Tor_i^Q(R/\m^t,\kk)\to \Tor^Q_i(R/\m^{t-1},\kk)$, which is induced by the natural quotient map $R/\m^t\to R/\m^{t-1}$, is zero for $1\le i$.\end{chunk-no-advance}

 We continue the identification of the functors
$$\HH_{\bullet}(-\t_Q K) \quad\text{and} \quad \Tor_\bullet^Q(-,\kk)$$ which was begun in (\ref{identify}). In other words, we take $$\Tor^Q_e(R,\kk)\text{ to be }\socle(R)\t_QK_e\quad\text{and}\quad \Tor^P_e(R,k)\text{ to be }\HH_e((R\t_QK)\langle Y\rangle);$$ furthermore, $\varphi_e^R$ carries the cycle $z$ in $\socle(R)\t_QK_e$ to the homology class of $z$ in  $(R\t_QK)\langle Y\rangle$.
The argument (\ref{XT}) shows that if $z\in \m^s\t_QK_e$, then the image of $z$   in $(\m^t\t_QK)\langle Y\rangle$ is a boundary; hence  the image of $z$   in $(R\t_QK)\langle Y\rangle$ is a boundary. Thus, $\m^s\t_QK_e\subseteq \ker (\varphi_e^R)$. We prove the other direction.

Let $w$ be an element of $\socle(R)\t_Q K_e$ which is an element of the kernel of  $\ker\varphi^R_e$. It follows that $w$ is a boundary in $\mathfrak (R\t_Q K)\langle Y\rangle$; therefore,  
\begin{align*}
w&{}=\partial\big(a_0+Ya_1+Y^{(2)}a_2+\dots+Y^{(m)}a_m\big)
\\&{}=\big(\partial(a_0)+\bar ga_{1}\big)+Y\big(\partial(a_1)+\bar ga_{2}\big)+\dots
+Y^{(m-1)}\big(\partial(a_{m-1})+\bar ga_{m}\big)+Y^{(m)}\partial(a_m), \end{align*}
for some  $a_i\in R\t_QK_{e+1-2i}$, with $1\le i\le \lfloor\frac {e+1}2\rfloor$. The module $K_{e+1}$ is zero; consequently,
$a_0=0$. 
The  $(R\t_Q K)$-module $(R\t_Q K)\langle Y\rangle$  is free, with basis $\{Y^{(i)}\}$, and therefore
\begin{align}\label{possible}&w=\bar ga_{1},\quad \partial(a_1)+\bar ga_{2}=0,\quad  \dots,\\
&\partial(a_{m-1})+\bar ga_{m}=0,\quad\text{and}\quad \partial(a_m)=0.\notag\end{align}

It is possible that $m=(e+1)/2$ and $a_m\in R\t_QK_0=R$. Observe that, in this case, $a_m\in \m$. Indeed, if $a_m$ were a unit, then the equation $\partial(a_{m-1})+\bar g a_m=0$ of (\ref{possible}) would yield that $\bar g$ is a boundary in $R\t_QK$ and it would follow from  Lemma~\ref{4.2.c} that $\m^s\t_Q K_e\subseteq \partial (R\t_QK_{e+1})=0$. The most recent statement is impossible because $R$ has top socle degree $s$.

We claim that for each $i$, there exists $b_i\in R\t_QK_{e+2-2i}$, $c_i\in \m^{t-1}\t_QK_{e+1-2i}$, and $d_i\in R\t_QK_{e-2i}$ such that 
\begin{equation}a_i=\partial (b_i)+c_i+\bar gd_i. 
\label{*}\end{equation} We prove (\ref{*}) by descending induction.

If $m<(e+1)/2$, then $a_m$ is a  $(e+1-2m)$-cycle in $R\t_QK$. (Of course, $a_m$ is also a cycle  in $R/\m^t\t_QK$). Apply  (\ref{repeat}) to find $b_m
\in R\t_QK_{e+2-2m}$ and $c_m$ in $\m^{t-1}\t_QK_{e+1-2m}$ with
$a_m=\partial(b_m)+c_m$. 
If $m=(e+1)/2$, then  $a_m\in \m$ and $a_m=\partial(b_m)$ for some $b_m
\in R\t_QK_{1}$.  

In any event, (\ref{*}) holds for  $i=m$.
Suppose, by induction, that (\ref{*}) holds at $i$, for some $i$ with $2\le i\le m$. We will establish (\ref{*}) at $i-1$. 
Apply (\ref{possible}), the induction hypothesis (\ref{*}), the fact that $\bar g$ is a cycle in $R\t_QK$, and the fact that $\bar g\in(R\t_QK)_1$ in order to see that
\begin{align}
0={}&\partial (a_{i-1})+\bar ga_i=
\partial (a_{i-1})+\bar g\left(\partial (b_i)+c_i+\bar gd_i\right)\notag\\
={}&\partial \big(a_{i-1}- (\bar gb_{i})
\big)+\bar gc_i.
\label{at any rate}\end{align}
The product $\bar gc_i$ is in $\m^{2t-2}\t K_{e+2-2i}$; and therefore,  
equation (\ref{at any rate}) exhibits 
$a_{i-1}- (\bar gb_{i})
$ as a cycle in $R/\m^t\t_QK$. 
Apply (\ref{repeat}) to find $b_{i-1}$ in $R\t_QK_{e+4-2i}$ and $c_{i-1}\in \m^{t-1}\t_QK_{e+3-2i}$
with $$a_{i-1}- (\bar gb_{i})
=\partial(b_{i-1})+c_{i-1}.
$$ Thus, (\ref{*}) holds at $i-1$. 

By induction, (\ref{*}) holds at $i=1$ and 
\begin{align*}w&{}=\bar ga_1= \bar g\big(\partial (b_1)+c_1+\bar gd_1
\big)\\&{}
= -\partial (\bar gb_1)+\bar gc_1
=\bar gc_1
,\end{align*}
for some
$b_1\in R\t_QK_{e}$, $c_1\in \m^{t-1}\t_QK_{e-1}$, and $d_1\in R\t_QK_{e-2}$. 
We used the fact that  $\bar gb_1\in R\t_QK_{e+1}=0$.
 Thus,
$$w=\bar gc_1\in (\socle(R)\cap \m^{2t-2})\t_QK_e.$$ We have assumed that $s=2t-1$ and that $$\socle(R)\cap \m^{s-1}=\m^s.$$ It follows that $w\in \m^s\t_Q K_e$, and the proof is complete.
\end{proof}

\section{Factoring out the highest power of the maximal ideal.}\label{consolation}
The hypotheses $s=2t-1$, $5\le s$, and $\socle(R)\cap \m^{s-1}=\m^s$ all are in effect in the interesting case of the main theorem, Theorem~\ref{5.1}. If we only assume $s=2t-1$, then we are not able to make any claim about the Poincar\'e series $P^R_{\kk}$; nonetheless, in Corollary~\ref{6.3}, we 
prove that the homomorphism $R\to R/\m^s$ is Golod. 
As a consequence, when  all of the hypotheses of the interesting case of Theorem~\ref{5.1} are reimposed, 
 we 
 prove, 
in Corollary~\ref{6.3.5},
that  $R/\m^s$ is a Golod ring.

\begin{corollary}\label{6.3} Let $(R,\m,\kk)$ be a compressed local Artinian ring with 
top socle degree $s$. If  $s=2{\tt v}(R)-1$, then  
the natural quotient homomorphism $\rho:R\to R/\m^s$ is Golod.
\end{corollary}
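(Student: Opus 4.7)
The plan is to apply Lemma~\ref{RS1.2} to the surjection $\rho\colon R\to\bar R:=R/\m^s$, with the source ring taken to be $R$ and the parameter chosen as $a=t:={\tt v}(R)$; the maximal ideal of $\bar R$ is $\bar\m=\m/\m^s$, and $\bar R/\bar\m^t=R/\m^t$.

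Condition (b) of Lemma~\ref{RS1.2} will be immediate: since $s=2t-1$, one has $2t=s+1$, hence $\m^{2t}\subseteq\m^{s+1}=0$ and $\bar\m^{2t}=0$, so the map $\Tor^R_i(\bar\m^{2t},\kk)\to\Tor^R_i(\bar\m^t,\kk)$ is trivially zero for every $i\ge 0$. The real content lies in condition (a): I must show that the map $\Tor^R_i(R/\m^s,\kk)\to\Tor^R_i(R/\m^t,\kk)$ induced by the quotient $R/\m^s\to R/\m^t$ vanishes for all $i\ge 1$. To do so, I will compare the long exact sequences attached to $0\to\m^s\to R\to R/\m^s\to 0$ and $0\to\m^t\to R\to R/\m^t\to 0$, which are linked on the left by the inclusion $\iota\colon\m^s\hookrightarrow\m^t$. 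Because $\Tor^R_j(R,\kk)=0$ for $j\ge 1$ and the edge map $\m^a\otimes_R\kk\to R\otimes_R\kk$ is zero (as $\m^a\subseteq\m$), the connecting homomorphisms give natural isomorphisms $\Tor^R_i(R/\m^s,\kk)\cong\Tor^R_{i-1}(\m^s,\kk)$ and $\Tor^R_i(R/\m^t,\kk)\cong\Tor^R_{i-1}(\m^t,\kk)$ for every $i\ge 1$. Naturality identifies the quotient-induced map with $\iota_*\colon\Tor^R_{i-1}(\m^s,\kk)\to\Tor^R_{i-1}(\m^t,\kk)$, and this is precisely the map shown to vanish in every non-negative degree by Lemma~\ref{June-19}.(\ref{June-19.b}), whose hypotheses (compressed local Artinian with $s=2t-1$) match those of the present corollary.

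With both hypotheses of Lemma~\ref{RS1.2} verified, the conclusion that $\rho$ is a Golod homomorphism follows at once. The substantive homological vanishing is already packaged in Lemma~\ref{June-19}.(\ref{June-19.b}); the only delicate points are the $i=1$ edge effect in the long exact sequence and the naturality identification, neither of which poses a serious obstacle.
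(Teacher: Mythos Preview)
Your proof is correct and follows essentially the same approach as the paper: apply Lemma~\ref{RS1.2} with source ring $R$, target $R/\m^s$, and $a=t={\tt v}(R)$, using Lemma~\ref{June-19}.(\ref{June-19.b}) for condition~(a) and $\m^{2t}=\m^{s+1}=0$ for condition~(b). The only difference is that you spell out, via the long exact sequences associated to $0\to\m^a\to R\to R/\m^a\to 0$, why vanishing of $\Tor^R_{i-1}(\m^s,\kk)\to\Tor^R_{i-1}(\m^t,\kk)$ implies vanishing of $\Tor^R_i(R/\m^s,\kk)\to\Tor^R_i(R/\m^t,\kk)$ for $i\ge 1$; the paper simply asserts ``it follows that'' this holds.
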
 

\begin{proof} Let $t={\tt v}(R)$. 
 It is shown in Lemma~\ref{June-19}.(\ref{June-19.b}) that the maps 
$$\Tor_i^R(\m^{s},\kk)\to \Tor_i^R(\m^t,\kk),$$ induced by the inclusion $\m^s \subseteq \m^t$, 
are zero for all $i$. 
It follows that the maps 
\begin{equation}\label{(*)}\Tor_i^R(R/\m^{s},\kk)\to \Tor_i^R(R/\m^t,\kk),\end{equation} induced by the natural quotient homomorphism  $R/\m^s \to R/\m^t$,  are zero for all positive $i$.
Apply Lemma \ref{RS1.2} with $P=R$,  $R$ replaced by $R/\m^s$, and $a=t$.  Condition (\ref{C4.2.a}) of  Lemma~\ref{RS1.2} is satisfied by  (\ref{(*)}). Condition (\ref{C4.2.b}) of  Lemma~\ref{RS1.2}  holds because $\m^{2t}=0$. 
Conclude that $\rho$ is a Golod homomorphism.
\end{proof}
 
The next result describes how to use a mapping cone to obtain a minimal resolution of the $Q$-module $R/\m^s$ if one already knows the minimal resolution of $R$. 

\begin{lemma}\label{c1}Let $(R,\m,\kk)$ be a compressed local Artinian ring of
 embedding dimension $e$ and   top socle degree $s$, $(Q,\n,\kk)$ be a regular local ring of embedding dimension $e$ with $R=Q/I$ for some ideal $I$ of $Q$,  and  $c_s$ be  $\dim_{\kk}\m^s$. If ${\tt v}(R)+1\le s$, then  $$P^Q_{R/\m^s}(z)=P^Q_R(z)+c_sz(1+z)^e-c_sz^e(1+z).$$\end{lemma}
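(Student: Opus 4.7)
The plan is to build a (usually non-minimal) $Q$-free resolution of $R/\m^s$ as the mapping cone of a lift of the inclusion $\iota\colon\m^s\hookrightarrow R$ to resolutions, and then excise the non-minimal piece by a Tor computation. Since the top socle degree of $R$ is $s$, the module $\m^s$ is annihilated by $\n$, so $\m^s\cong\kk^{c_s}$ as a $Q$-module; its minimal $Q$-free resolution is $K^{c_s}$, where $K$ is the Koszul complex on a minimal generating set of $\n$ (of length $e$, since $I\subseteq\n^2$ in Data~\ref{data5}), with Poincar\'e series $c_s(1+z)^e$. Let $F_\bullet\to R$ be the minimal $Q$-free resolution of $R$, lift $\iota$ to a chain map $\phi_\bullet\colon K^{c_s}\to F_\bullet$, and form the mapping cone $C_\bullet$ with $C_i=F_i\oplus K^{c_s}_{i-1}$. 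Because $\iota$ is injective, $C_\bullet$ resolves $R/\m^s$, and $\sum_i\operatorname{rank}_Q(C_i)\,z^i=P^Q_R(z)+c_sz(1+z)^e$.

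The only place unit entries can appear in the differential of $C_\bullet$ is inside the connecting blocks $\phi_i$, and the reduction $\phi_i\otimes_Q\kk$ is (independently of the lift) the Tor map
\[
\alpha_i\colon\Tor^Q_i(\m^s,\kk)\to\Tor^Q_i(R,\kk)
\]
induced by $\iota$. The main step is to compute $\alpha_i$. I would factor $\iota$ as $\m^s\subseteq\m^{s-1}\subseteq\cdots\subseteq R$, whose first stage is the map $\nu^Q_i(\m^{s-1})$ of Definition~\ref{y(1.3.1)}. The hypothesis ${\tt v}(R)+1\le s$ yields $s-1\ge{\tt v}(R)$, so Lemma~\ref{4.2.b} makes $\nu^Q_i(\m^{s-1})=0$ for $i<e$, forcing $\alpha_i=0$ for $i<e$. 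For $i=e$, the top-degree Koszul identification $\Tor^Q_e(M,\kk)\cong\socle(M)$ (coming from $K_e\cong Q$ and $K_{e+1}=0$) turns $\alpha_e$ into the honest inclusion $\m^s\hookrightarrow\socle(R)$, which is injective of rank $c_s$.

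The conclusion is then pure bookkeeping on the cone. For $i<e$, $\alpha_i=0$ implies $\operatorname{im}\phi_i\subseteq\n F_i$, so no cancellation occurs in those degrees. The $c_s$ independent units in $\phi_e$ cancel $c_s$ summands of $(K^{c_s})_e\subset C_{e+1}$ against $c_s$ summands of $F_e\subset C_e$, and nothing remains to cancel beyond that because $K_{e+1}=0$. Subtracting $c_s(z^e+z^{e+1})=c_sz^e(1+z)$ from the cone's rank generating function produces exactly the claimed formula for $P^Q_{R/\m^s}(z)$. The main obstacle is the Tor vanishing $\alpha_i=0$ for $i<e$, which resolves neatly by factoring $\iota$ through $\m^{s-1}$ and invoking Lemma~\ref{4.2.b}; everything else is a routine mapping cone manipulation.
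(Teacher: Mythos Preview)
Your proof is correct and follows essentially the same approach as the paper: both arguments hinge on showing that the maps $\alpha_i\colon\Tor_i^Q(\m^s,\kk)\to\Tor_i^Q(R,\kk)$ vanish for $i<e$ (via the factorization through $\m^{s-1}$ and Lemma~\ref{4.2.b}) and that $\alpha_e$ is injective, then extract the Betti numbers of $R/\m^s$. The paper reads off the Betti numbers from the long exact sequence of $\Tor$ attached to $0\to\m^s\to R\to R/\m^s\to 0$, whereas you package the same information as a mapping-cone cancellation; these are equivalent formulations, and your identification of $\alpha_e$ with the inclusion $\m^s\hookrightarrow\socle(R)$ via $\Tor_e^Q(M,\kk)\cong\socle(M)$ is a nice concrete alternative to the paper's appeal to $\Tor_{e+1}^Q=0$.
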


\begin{proof} Observe that  the inclusion $\m^s\subseteq R$ induces  the
following statements:
\begin{equation}\label{Tor}\begin{cases}
\Tor_i^Q(\m^s,\kk)\to \Tor_i^Q(R,\kk)\text{ is zero }&\text{for $0\le i\le e-1$, and }\\
\Tor_i^Q(\m^s,\kk)\to \Tor_i^Q(R,\kk)\text{ is an injection}&\text{for $i=e$.}
\end{cases}\end{equation}
Before  establishing (\ref{Tor}); we draw consequences from these statements. One combines (\ref{Tor}) and   the short exact sequence
\begin{equation} 
\label{ses}
 0\to \m^s\to R\to R/\m^s\to0\end{equation}  to relate the Betti numbers (denoted $b_i(M)$) of the $Q$-modules $M=R/\m^s$, $M=R$, and   $M=\m^s$. Keep in mind that  $\m^s$ is isomorphic to the direct sum of $c_s$ copies of $\kk$. The Betti numbers are related by 
$$b_i(R/\m^s)=\begin{cases} 
b_0(R),&\text{if $i=0$},\\
b_i(R)+c_sb_{i-1}(\kk),&\text{if $1\le i\le e-1$, and}\\
b_e(R)-c_s+c_sb_{e-1}(\kk),&\text{if $i=e$}.\end{cases}$$
It follows that 
$$P_{R/\m^s}^Q(z)=P_{R}^Q(z)+c_sP^Q_{\kk}(z)-c_sz^e-c_sz^{e+1},$$ as claimed.

Now we prove (\ref{Tor}). The long exact sequence of homology that is associated to (\ref{ses}) ends with
$$0\to\Tor^Q_e(\m^s,\kk)\to  \Tor^Q_e(R,\kk);$$ hence, the lower line in (\ref{Tor}) holds. On the other hand, if $0\le i\le e-1$, then  Lemma~\ref{4.2.b} guarantees that the inclusion $\m^{\ell+1}\subseteq \m^\ell$ induces the zero map
$$\Tor^Q_i(\m^{\ell+1},\kk)\to \Tor^Q_i(\m^{\ell},\kk)$$ for all $\ell$ with ${\tt v}(R)\le \ell$. The hypothesis ensures  that ${\tt v}(R)\le s-1$; hence,
$$\Tor^Q_i(\m^{s},\kk)\to \Tor^Q_i(\m^{s-1},\kk)$$ is the zero map for  $i<e$. The top line of (\ref{Tor}) holds because the inclusion
 $\m^s\subseteq R$ factors through the inclusion $\m^s\subseteq \m^{s-1}$.
\end{proof}

In the interesting case of the main theorem, the ring $R/\m^s$ is Golod.
\begin{corollary}\label{6.3.5} Let $(R,\m,\kk)$ be a compressed local Artinian ring with  
top socle degree $s$. If $s=2{\tt v}(R)-1$, 
$5\le s$, and $\socle(R)\cap \m^{s-1}=\m^s$, then the ring $R/\m^s$ is Golod. 
\end{corollary}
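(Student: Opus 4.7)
The plan is to verify directly the Golod characterization for $R/\m^s = Q/J$ (where $J$ is the preimage of $\m^s$ under $\pi:Q\to R$), namely the equality
\[P^{R/\m^s}_\kk(z)=\frac{(1+z)^e}{1-z\bigl(P^Q_{R/\m^s}(z)-1\bigr)}.\]
Since $R/\m^s$ and $Q$ both have embedding dimension $e$ and $J\subseteq\n^2$, this equality is equivalent to $R/\m^s$ being a Golod ring. All the ingredients needed are already on the table.

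First, Theorem~\ref{5.1} applies under our hypotheses and yields
\[P^R_\kk(z)=\frac{(1+z)^e}{d_R(z)},\qquad d_R(z)=1-z\bigl(P^Q_R(z)-1\bigr)+c_sz^{e+1}(1+z).\]
Second, Corollary~\ref{6.3} tells us that $\rho\colon R\to R/\m^s$ is a Golod homomorphism, so the classical Golod-map formula \cite[Prop.~1]{G72} gives
\[P^{R/\m^s}_\kk(z)=\frac{P^R_\kk(z)}{1-z\bigl(P^R_{R/\m^s}(z)-1\bigr)}.\]

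Next I would compute $P^R_{R/\m^s}(z)$. Because $\m\cdot\m^s=\m^{s+1}=0$, the $R$-module $\m^s$ is isomorphic to $\kk^{c_s}$. Applying $\Tor^R_\bullet(-,\kk)$ to the short exact sequence $0\to\m^s\to R\to R/\m^s\to 0$ and noting that the map $\Tor^R_0(\m^s,\kk)\to\Tor^R_0(R,\kk)$ is zero (since $\m^s\subseteq\m$), the long exact sequence collapses to $\dim_\kk\Tor^R_i(R/\m^s,\kk)=c_s\cdot b_{i-1}(\kk)$ for all $i\ge 1$. Hence
\[P^R_{R/\m^s}(z)=1+c_sz\,P^R_\kk(z).\]
Substituting into the Golod-map formula produces
\[P^{R/\m^s}_\kk(z)=\frac{(1+z)^e}{d_R(z)-c_sz^2(1+z)^e}.\]

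Finally, the hypothesis $s\ge 5$ combined with $s=2t-1$ gives $t\ge 3$, so Lemma~\ref{c1} applies and delivers
\[P^Q_{R/\m^s}(z)=P^Q_R(z)+c_sz(1+z)^e-c_sz^e(1+z).\]
A short calculation, consisting only of expanding $1-z\bigl(P^Q_{R/\m^s}(z)-1\bigr)$ and comparing the result against the definition of $d_R(z)$, shows
\[1-z\bigl(P^Q_{R/\m^s}(z)-1\bigr)=d_R(z)-c_sz^2(1+z)^e,\]
and combining this with the previous display yields the desired Golod equality, so $R/\m^s$ is Golod.

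There is no real obstacle here: the entire substance of the argument is already carried by Theorem~\ref{5.1}, Corollary~\ref{6.3}, and Lemma~\ref{c1}. The only delicate point is the bookkeeping in the final polynomial identity, which is mechanical once one lines up the two expressions for $P^{R/\m^s}_\kk(z)$ and the formula for $P^Q_{R/\m^s}(z)$.
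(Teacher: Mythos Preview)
Your argument is correct and follows essentially the same route as the paper: both proofs verify the Golod equality $P^{R/\m^s}_\kk(z)=(1+z)^e/\bigl(1-z(P^Q_{R/\m^s}(z)-1)\bigr)$ by combining Theorem~\ref{5.1}, Corollary~\ref{6.3}, the formula $P^R_{R/\m^s}(z)=1+c_sz\,P^R_\kk(z)$ from the short exact sequence, and Lemma~\ref{c1}, then checking the resulting polynomial identity. The only cosmetic difference is the order in which the ingredients are assembled.
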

\begin{proof}
 Let $e$ be the embedding dimension of $R$, $c_s$ be $\dim_{\kk}\m^s$, and $(Q,\n,\kk)$ be a regular local ring with $R=Q/I$ for some ideal $I\subseteq \n^2$. Recall (see, for example, \cite[(5.0.1)]{A98} or \ref{2.14}) that
$R/\m^s$ is Golod if and only if 
\begin{equation}\label{c2}P_{\kk}^{R/\m^s}(z)=\frac{P^Q_{\kk}(z)}{1-z(P^Q_{R/\m^s}(z)-1)}.
\end{equation}  We calculate both sides of (\ref{c2}), verify the equality, and thereby prove the result.
Observe first that 
\begin{equation}\label{c3}P^R_{R/\m^s}(z)=1+c_szP^R_{\kk}(z).
\end{equation}
Indeed, the exact sequence 
$$0\to \m^s\to R\to R/\m^s\to 0$$ is the beginning of  the minimal resolution of $R/\m^s$ by free $R$-modules and $\m^s$ is isomorphic to $\bigoplus_{c_s}\kk$.

The hypotheses of Theorem~\ref{5.1} are in effect; and therefore,
\begin{equation}\label{c4} P_{\kk}^R(z)=\frac{(1+z)^e}{1 - z
(
P^Q_R
(z) - 1
)
+ c_sz^{e+1}(1 + z)}.\end{equation}
The map $R\to R/\m^s$ is Golod by Corollary~\ref{6.3}; and therefore,
\begin{equation}\label{c5}P_{\kk}^{R/\m^s}(z)=\frac{P^R_{\kk}(z)}{1-z(P^R_{R/\m^s}(z)-1)};\end{equation}see, for example \cite[Prop. 1]{G72} or \cite[Prop.~3.3.2]{A98}. Use (\ref{c5}), (\ref{c3}), (\ref{c4}), and then Lemma~\ref{c1} to calculate
\begingroup\allowdisplaybreaks\begin{align*}P_k^{R/\m^s}(z)&{}=\frac{P^R_{\kk}(z)}{1-z(P^R_{R/\m^s}(z)-1)}=\frac{P^R_{\kk}(z)}{1-c_sz^2P^R_{\kk}(z)}\\
&{}=\frac{1}{(P^R_{\kk}(z))^{-1}-c_sz^2}\\
&{}=\frac{1}{
\frac{1 - z
(
P^Q_R
(z) - 1
)
+ c_sz^{e+1}(1 + z)}{(1+z)^e}-c_sz^2}\\
&{}=\frac{(1+z)^e}{
1 - z(P^Q_R(z) - 1)+ c_sz^{e+1}(1 + z)-c_sz^2(1+z)^e}\\
&{}=\frac{(1+z)^e}{
1 - z\big(P^Q_R(z)+c_sz(1+z)^e -c_sz^{e}(1 + z)- 1\big)}\\
&{}=\frac{(1+z)^e}{
1 - z\big(P^Q_{R/\m^s}(z)- 1\big)}.\end{align*}\endgroup
Apply (\ref{c2}) to conclude   that 
$R/\m^s$ is a Golod ring.
\end{proof}

\medskip\noindent{\bf Acknowledgment.} An earlier version of this paper studied compressed level local Artinian rings.
Lars Christensen encouraged the authors to remove the hypothesis ``level'' and they  appreciate this suggestion.

\end{document}